\RequirePackage[clockwise]{rotating}
\documentclass[a4paper, reqno]{amsart}
\DeclareRobustCommand{\SkipTocEntry}[5]{}
\usepackage{amsmath,amsthm,amssymb,relsize}
\usepackage[nobysame,alphabetic,initials]{amsrefs}
\usepackage
[left=30mm,
right=30mm,
top=30mm,
bottom=32mm]
{geometry}
\usepackage{mathrsfs,mathtools,makecell}

\usepackage{booktabs, multirow, adjustbox, array,xpatch, comment}

\newcolumntype{R}[2]{%
    >{\adjustbox{angle=#1,lap=\width-(#2)}\bgroup}%
    l%
    <{\egroup}%
}
\usepackage{pifont}
\usepackage{floatrow}
\DeclareFloatFont{tiny}{\small}% "scriptsize" is defined by floatrow, "tiny" not
\usepackage{enumerate}

\usepackage{tikz}
\usetikzlibrary{
  cd,
  calc,
  positioning,
  fit,
  arrows,
  decorations.pathreplacing,
  decorations.markings,
  shapes.geometric,
  backgrounds,
  bending
}
\usepackage{tikzsymbols}
\usepackage{xcolor}

\definecolor{darkblue}{rgb}{0,0,0.6}

\usepackage[breaklinks, pdftex, ocgcolorlinks,colorlinks=true, citecolor=darkblue, filecolor=darkblue, linkcolor=darkblue, urlcolor=darkblue]{hyperref}
\usepackage{footnotebackref}
\makeatletter
\xpretocmd{\@adminfootnotes}{\let\@makefntext\BHFN@OldMakefntext}{}{}
\renewcommand\@makefntext[1]{%
  \@ifundefined{@makefnmark}
    {}
    {%
     \renewcommand\@makefnmark{%
       \mbox{%
         \textsuperscript{%
           \normalfont
           \hyperref[\BackrefFootnoteTag]{\@thefnmark}%
         }%
       }\,%
     }%
     \BHFN@OldMakefntext{#1}%
  }%
}
\makeatother

\usepackage[capitalize,noabbrev]{cleveref}

\makeatletter
\newtheorem*{rep@theorem}{\rep@title}
\newcommand{\newreptheorem}[2]{%
\newenvironment{rep#1}[1]{%
 \def\rep@title{#2 \ref{##1}}%
 \begin{rep@theorem}}%
 {\end{rep@theorem}}}
\makeatother

\newtheorem{proposition}{Proposition}[section]
\newtheorem{theorem}[proposition]{Theorem}
\newtheorem{corollary}[proposition]{Corollary}
\newtheorem{lemma}[proposition]{Lemma}

\theoremstyle{definition}
\newtheorem{definition}[proposition]{Definition}
\newtheorem{question}[proposition]{Question}
\newtheorem{example}[proposition]{Example}

\theoremstyle{remark}
\newtheorem{remark}[proposition]{Remark}

\newtheorem*{remark*}{Remark}

\newreptheorem{theorem}{Theorem}
\newreptheorem{lemma}{Lemma}
\newreptheorem{proposition}{Proposition}
\newreptheorem{corollary}{Corollary}
\newreptheorem{question}{Question}

\AddToHook{env/proposition/begin}{\crefalias{proposition}{proposition}}
\AddToHook{env/theorem/begin}{\crefalias{proposition}{theorem}}
\AddToHook{env/thmx/begin}{\crefalias{proposition}{thmx}}
\AddToHook{env/corx/begin}{\crefalias{proposition}{corx}}
\AddToHook{env/assumption/begin}{\crefalias{proposition}{assumption}}
\AddToHook{env/hypothesis/begin}{\crefalias{proposition}{hypothesis}}
\AddToHook{env/corollary/begin}{\crefalias{proposition}{corollary}}
\AddToHook{env/definition/begin}{\crefalias{proposition}{definition}}
\AddToHook{env/lemma/begin}{\crefalias{proposition}{lemma}}
\AddToHook{env/question/begin}{\crefalias{proposition}{question}}
\AddToHook{env/example/begin}{\crefalias{proposition}{example}}
\AddToHook{env/conjecture/begin}{\crefalias{proposition}{conjecture}}
\AddToHook{env/remark/begin}{\crefalias{proposition}{remark}}
\AddToHook{env/const/begin}{\crefalias{proposition}{const}}

\crefname{assumption}{Assumption}{Assumptions}
\crefname{hypothesis}{Hypothesis}{Hypotheses}
\crefname{theorem}{Theorem}{Theorems}
\crefname{thmx}{Theorem}{Theorems}
\crefname{proposition}{Proposition}{Propositions}
\crefname{corollary}{Corollary}{Corollaries}
\crefname{definition}{Definition}{Definitions}
\crefname{lemma}{Lemma}{Lemmas}
\crefname{question}{Question}{Questions}
\crefname{example}{Example}{Examples}
\crefname{conjecture}{Conjecture}{Conjectures}
\crefname{remark}{Remark}{Remarks}
\crefname{const}{Construction}{Constructions}

\newcommand{\Diff}{\text{Diff}}

\renewcommand{\top}{\mathrm{Top}}

\newcommand{\N}{\mathbb{N}}

\newcommand{\R}{\mathbb{R}}
\newcommand{\Z}{\mathbb{Z}}

\newcommand{\im}{\operatorname{Im}}
\newcommand{\Id}{\operatorname{Id}}
\newcommand{\id}{\operatorname{Id}}

\newcommand{\spin}{\mathrm{Spin}}
\newcommand{\pinplus}{{\mathrm{Pin}^+}\!}
\newcommand{\pinminus}{{\mathrm{Pin}^-}}

\newcommand{\Spin}{\mathrm{Spin}}

\newcommand{\onto}{\twoheadrightarrow}
\newcommand{\into}{\hookrightarrow}
\newcommand{\ol}{\overline}
\newcommand{\wt}{\widetilde}

\newcommand{\sm}{\setminus}

\newcommand{\RP}{\mathbb{RP}}

\DeclareMathOperator{\Sq}{Sq}

\DeclareMathOperator{\Hom}{Hom}

\DeclareMathOperator{\BSpin}{BSpin}
\DeclareMathOperator{\TopSpin}{TopSpin}

\DeclareMathOperator{\ev}{ev}
\DeclareMathOperator{\BTop}{BTop}
\newcommand{\Top}{\mathrm{Top}}

\newcommand{\OO}{\mathrm{O}}

\DeclareMathOperator{\BO}{BO}

\DeclareMathOperator{\pr}{pr}

\newcommand{\bsm}{\left(\begin{smallmatrix}}
\newcommand{\esm}{\end{smallmatrix}\right)}

\usepackage{letltxmacro}

\LetLtxMacro\Oldfootnote\footnote

\begin{document}
\title{Stably exotic fillings of 3-manifolds}

\author{Daniel Kasprowski}
\address{School of Mathematical Sciences, University of Southampton, United Kingdom}
\email{d.kasprowski@soton.ac.uk}

\author{Patrick Orson}
\address{Department of Mathematics, California Polytechnic State University, San Luis Obispo, USA}
\email{porson@calpoly.edu}

\author{Mark Powell}
\address{School of  Mathematics and Statistics, University of Glasgow, United Kingdom}
\email{mark.powell@glasgow.ac.uk}

\author{Arunima Ray}
\address{School of Mathematics and Statistics, The University of Melbourne, Australia}
\email{aru.ray@unimelb.edu.au}

\def\subjclassname{\textup{2020} Mathematics Subject Classification}
\expandafter\let\csname subjclassname@1991\endcsname=\subjclassname
\subjclass{
57K40. %General topology of 4-manifolds
%57K10, % Knot theory
%57N35. % Embeddings and immersions in topological manifolds
%57N70, % Cobordism and concordance in topological manifolds
%57R67. % surgery obstructions; Wall groups
}
\keywords{4-manifolds, exotic fillings}

\begin{abstract}
We investigate which 3-manifolds bound 4-manifolds that are homeomorphic but not stably diffeomorphic, where stabilising means taking connected sum with copies of $S^2\times S^2$. We show that every closed, orientable 3-manifold admits such fillings, as do certain families of nonorientable 3-manifolds. In contrast we show that for a 3-manifold containing a 2-sided~$\RP^2$, any two smooth, homeomorphic fillings are stably diffeomorphic. 
\end{abstract}

\maketitle

\section{Introduction}

A \emph{filling} of a closed, smooth 3-manifold $Y$ is a smooth, compact 4-manifold $M$ with $\partial M = Y$. We prove the following. 

\begin{theorem}\label{thm:some-stably-exotic-fillings-exist}
Let $Y$ be a closed, smooth 3-manifold. Then $Y$ admits a pair of fillings that are homeomorphic rel.\ boundary but not stably diffeomorphic rel.\ boundary if one of the following holds.
  \begin{enumerate}[(i)]
    \item\label{main-thm-item-i} $Y$ is orientable.  
    \item\label{main-thm-item-ii} $Y$ is a circle bundle over a closed %nonorientable
    surface $F \neq \RP^2$. 
    \item\label{main-thm-item-iii} $Y$ admits a null-bordant tangential $\pinplus$ structure. 
   \end{enumerate}
 \end{theorem}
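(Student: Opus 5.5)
The strategy is to use the Kreck modified surgery framework. In this framework, two smooth fillings $M_0, M_1$ of $Y$ with the same normal $1$-type $B$ are stably diffeomorphic rel.\ boundary if and only if the closed manifold $M_0 \cup_Y -M_1$, with its induced $B$-structure, is null-bordant in $\Omega_4(B)$ for some choice of $B$-structures agreeing on $Y$. If $M_0$ and $M_1$ are homeomorphic rel.\ boundary, the corresponding topological bordism class lies in $\Omega_4^{\Top}(B)$. The obstruction to stable smoothability rel.\ boundary of a homeomorphism is then detected by the Kirby--Siebenmann invariant, or equivalently by the comparison map $\Omega_4(B)\to\Omega_4^{\Top}(B)$. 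The plan therefore has two parts. First, build two fillings that differ by a ``stably exotic'' modification, which homeomorphically changes nothing. Second, show that every rel.\ boundary homeomorphism-induced identification of $B$-structures yields a nonzero element of the kernel of $\Omega_4(B)\to \Omega_4^{\Top}(B)$. Roughly, this kernel is generated by $E_8\#E_8$ versus $K3$ type phenomena in the spin case, and by $\Z/2$ classes detected via $\pinplus$ bordism, related to $\Omega_4^{\pinplus}=\Z/16$ versus the topological version $\Z/8\oplus\Z/2$.

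The construction I would use is this. Take one filling $M_0$ and set $M_1 = M_0 \# K$, compared against $M_0 \# K'$, where $K, K'$ are closed smooth $4$-manifolds that are homeomorphic but not stably diffeomorphic. The universal choice is a fake pair in the relevant normal $1$-type:
\begin{itemize}
\item in the orientable spin setting, $K3$ versus $\#^{3}(S^2\times S^2)\#\,2(-E_8)$-type manifolds are not available smoothly, so instead use the invariant $\sigma/16 \bmod 2$ (Rokhlin) with $K3\#\overline{K3}$ against $\#^{22}S^2\times S^2$;
\item in the nonorientable setting, use the Cappell--Shaneson fake $\RP^4$ against $\RP^4$, distinguished by the $\eta$-invariant in $\Omega_4^{\pinplus}\cong\Z/16$.
\end{itemize}

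A more robust approach is to choose the fillings so that the normal $1$-type is forced to be simple. For (i), take $M_0$ to be a spin filling of $Y$ (one exists since $\Omega_3^{\spin}=0$), and pass to a $2$-handlebody so that $\pi_1(M_0)=1$ after surgery. Then $M_0$ and $M_0\#E_8\#E_8\#\overline{K3}$... this fails homeomorphically. The correct replacement is to use the Freedman star construction or $*\CP^2$ type phenomena. I expect the cleaner route is as follows: for simply connected fillings with spin boundary, homeomorphic smooth fillings whose union has nonzero $\mu$/Rokhlin difference are not stably diffeomorphic, since stable diffeomorphism rel.\ boundary preserves the spin bordism class of $M_0\cup -M_1$, which is detected by $\sigma/16$. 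So I would produce, via Freedman, a topological spin filling $W$ homeomorphic rel.\ boundary to $M_0$ whose union $M_0\cup -W$ has signature $16 \bmod 32$ but vanishing Kirby--Siebenmann. This is impossible topologically in the spin case, so the nontrivial comparison must instead come from a non-spin normal type with a $w_2$-type twist. This is where I would rely on the earlier reduction results in the paper: the relevant group is $\ker(\Omega_4(\xi)\to\Omega_4^{\Top}(\xi))$, and one needs a class in it realised by a gluing. For (ii) and (iii), the normal $1$-type is $\pinplus$ (or twisted), and the kernel contains the class of the Cappell--Shaneson $\RP^4$ minus $\RP^4$, of order $8$ in $\Z/16$. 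Realising it requires the $\pinplus$ structure on $Y$ to bound, which is exactly hypothesis (iii). For circle bundles in (ii), I would exhibit the disc bundle as an explicit filling whose $\pinplus$ structure extends.

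The main obstacle is the realisation step: showing that the image of the fake class under the gluing/bordism map is not absorbed by the indeterminacy coming from self-homeomorphisms of $Y$ and from the choice of $B$-structures. I would first try computing the indeterminacy subgroup via the action of $H^1(Y;\Z/2)$ on structures and checking that its image in $\Omega_4$ avoids the fake class, using the $\eta$-invariant as detector.
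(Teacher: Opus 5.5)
\textbf{Verdict: the proposal has genuine gaps.} You have the right framework: Kreck's criterion, and the kernel of $\Omega_4(\xi)\to\Omega_4(\xi^{\Top})$. But you never reach a construction that works, and you defer the step that carries the proof.

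\textbf{Case (i).} The pairs you list fail. $K_3\#\overline{K_3}$ and $22(S^2\times S^2)$ both have signature $0$; in fact they have isomorphic forms, so they are stably diffeomorphic by Wall. Moreover, in the spin setting the kernel of $\Omega_4^{\Spin}\to\Omega_4^{\TopSpin}$ is zero, since this map is $\Z\xrightarrow{\cdot 2}\Z$. Your conclusion that a ``$w_2$-type twist'' is needed is also off: by Gompf, the fillings must be \emph{nonorientable}.

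The idea you are missing is \cref{lemma:homeomorphic-construction}. For any nonorientable $X$ with $\partial X=Y$, the manifolds $X\#K_3$ and $X\#11(S^2\times S^2)$ are homeomorphic rel.\ boundary. The reason is that topologically $K_3\cong E_8\#E_8\#3(S^2\times S^2)$, and one $E_8$ summand can be reversed by sliding the connected-sum ball around an orientation-reversing loop.

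For orientable $Y$ the paper applies this to a simply connected spin filling summed with $S^1\wt\times S^3$, so the normal $1$-type data are $(\Z,w\neq0,0)$. Then $\Omega_4(\xi_\Z)\cong H_0(\Z;\Z^-)\cong\Z/2$, detected by $\sigma/16$ of a simply connected spin representative. Since $S^1\wt\times S^3$ is null-bordant for every structure, the union has invariant $\sigma(K_3)/16=1$ for every choice of smoothings.

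\textbf{Case (ii).} Extending a $\pinplus$ structure over the disc bundle is impossible in general. For example, $S^1\times\#^3\RP^2$ has no tangential $\pinplus$ structure at all, because its orientation character does not lift to $\Z/4$ (\cref{prop:circle-bundle}). The paper instead uses the normal $1$-type of the disc bundle $X$ itself. Here $\pi_1(X)=\pi_1(\Sigma)$ has cohomological dimension $2$, so $H^3(G;\Z/2)=0$ and the criterion $w^3=wv$ of \cref{thm:stably-exotic-dim3} holds automatically.

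\textbf{The step you defer.} The ``realisation'' step you flag as the main obstacle is where the proof actually lives, and your proposed fix aims at the wrong object. $H^1(Y;\Z/2)$ acts on structures on $Y$, whereas the ambiguity in Kreck's criterion is in the choice of normal $1$-smoothings of the fillings rel.\ $Y$. Self-homeomorphisms of $Y$ play no role, since everything is rel.\ boundary.

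The paper removes this ambiguity by choosing the filling so that $\varphi\colon\pi_1(Y)\to G$ is surjective:
\begin{itemize}
\item in (ii), by taking the disc bundle as the filling;
\item in (iii), for nonorientable $Y$, by using the data $(\Z/2,1,1)$ with $\varphi=w_1^Y$, realised over $\xi_{\Z/2}$ by the $\pinplus$ null-bordism.
\end{itemize}
Then $Y\to M\#\ell(S^2\times S^2)$ is $1$-connected, while the fibre $F$ of $\xi_G$ has $\pi_i(F)=0$ for $i\geq 2$. So all obstruction groups $H^i(M\#\ell(S^2\times S^2),Y;\pi_i(F))$ vanish, and any two normal $1$-smoothings agreeing on $Y$ are homotopic rel.\ $Y$. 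Hence a stable diffeomorphism rel.\ $Y$ between $X\#K_3$ and $X\#11(S^2\times S^2)$ would force $[K_3]=0\in\Omega_4(\xi_G)$. By Kasprowski--Powell this rules out closed stably exotic manifolds with data $(G,w,v)$. But such manifolds exist: by Kreck for $(\Z/2,1,1)$, and by \cref{thm:stably-exotic-dim3} in case (ii). This is \cref{thm:main-characterisation}\,(ii).

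Your alternative of summing with a Cappell--Shaneson $\RP^4$ works against this. It changes $\pi_1$ to $\pi_1(M_0)*\Z/2$, so $\pi_1(Y)$ no longer surjects. You would then have to control exactly the dependence on smoothings rel.\ $Y$ that you leave open. (Also, the difference class $8\in\Z/16$ has order $2$, not $8$.)
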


Here two fillings $M$ and $M'$ of $Y$ are \emph{stably diffeomorphic rel.\ boundary} if there exists $k\geq 0$ such that $M \# k(S^2 \times S^2)$ and $M' \# k(S^2 \times S^2)$ are diffeomorphic via a diffeomorphism restricting to the identity on the boundary.
Stable homeomorphism, which is used later, is defined analogously.  
Note that the fillings in \cref{thm:some-stably-exotic-fillings-exist} are in particular exotic fillings. By a theorem of Gompf \cite{Gompf84}, homeomorphism implies stable diffeomorphism for compact, orientable 4-manifolds, so the fillings we produce in~\cref{thm:some-stably-exotic-fillings-exist} are necessarily nonorientable.

Every orientable 3-manifold admits a null-bordant tangential $\pinplus$ structure, so~\cref{thm:some-stably-exotic-fillings-exist}~\eqref{main-thm-item-i} is implied by \cref{thm:some-stably-exotic-fillings-exist}~\eqref{main-thm-item-iii}. We state \eqref{main-thm-item-i} separately for emphasis, and because we give the proof for this case separately, in \cref{section:orientable-case}. 

Our examples will be of the form
\[M = Z \# K_3 \text{ and } M' = Z \# 11(S^2 \times S^2), \]
for a suitable nonorientable filling $Z$ of $Y$, where $K_3$ denotes the Kummer surface. The first examples of this sort, for closed 4-manifolds, are due to Kreck~\cite{Kreck84}. 
Using that $K_3$ splits topologically as $E_8 \# E_8 \# 3(S^2 \times S^2)$, we will show in \cref{lemma:homeomorphic-construction} that $M$ is homeomorphic to $M'$. But by Rochlin's theorem, there is no such smooth splitting of $K_3$. This is used in the proof of \cref{thm:some-stably-exotic-fillings-exist}, for $Y$ satisfying \eqref{main-thm-item-i}, \eqref{main-thm-item-ii}, or \eqref{main-thm-item-iii}, and  well-chosen fillings $Z$, to show that the union $M \cup_Y M'$ is nontrivial when considered in the kernel $\ker(\Omega_4(\xi_G) \to \Omega_4(\xi^{\Top}_G))$ of the forgetful map between smooth and topological bordism groups over certain fibrations, defined in \cref{section:norma-1-type-data} and dependent on $G=\pi_1(Z)$. Using a result of Kreck~\cite{surgeryandduality}*{Corollary~3} (see \cref{thm:kreckstable1type} for details), we show how unions in this kernel give rise to the eponymous \emph{stably exotic fillings} of $Y$, namely fillings that are stably homeomorphic but not stably diffeomorphic, both rel.~boundary. Working stably is thus the most natural setting in which to isolate this type of exotic behaviour, arising from Rochlin's theorem, since it factors out the effect of gauge theory. 

The above strategy does not always produce stably exotic fillings, and indeed not all 3-manifolds admit them. 

\begin{theorem}\label{thm:some-stably-exotic-fillings-dont-exist}
   Let $Y$ be a closed, smooth 3-manifold that contains a two-sided $\RP^2$. Then $Y$ does not admit stably exotic fillings. 
\end{theorem}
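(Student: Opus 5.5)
Suppose $M$ and $M'$ are smooth fillings of $Y$ that are stably homeomorphic rel.\ boundary. The goal is to show they are stably diffeomorphic rel.\ boundary, and the tool is Kreck's stable classification (\cref{thm:kreckstable1type}). Two fillings with the same normal 1-type $\xi_G$ are stably diffeomorphic rel.\ boundary provided the closed union $X = M \cup_Y -M'$, equipped with compatible normal 1-smoothings, is null-bordant in $\Omega_4(\xi_G)$, possibly after changing the choice of 1-smoothing on one side.

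The stable homeomorphism gives a topological null-bordism of $X$ in $\Omega_4(\xi^{\Top}_G)$. So $[X]$ lies in $\ker(\Omega_4(\xi_G)\to\Omega_4(\xi^{\Top}_G))$. By the discussion in the introduction, this kernel is detected by Rochlin-type phenomena: it is generated by classes of the form $[K_3 \# \text{(trivial)}] - [11(S^2\times S^2)]$, i.e.\ by the image of $\ker(\Omega_4^{\spin}\to\Omega_4^{\TopSpin}) \cong \Z\langle K_3\rangle$ (index-$16\cdot$) acting on the relevant twisted theory. More precisely, I would identify this kernel via the James or Atiyah--Hirzebruch spectral sequence. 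The difference between smooth and topological bordism sits in the $E_2^{0,4}$ corner, and there it is generated by the class of $K_3$ pushed in via a point. So it suffices to show that this $K_3$ class can be absorbed, that is, that $X \# K_3$ and $X$ represent the same element up to the ambiguity allowed in Kreck's theorem.

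The key step, and the place where the two-sided $\RP^2 \subset Y$ is used, is the following. The $\RP^2$ has neighbourhood $\RP^2\times I$ inside $Y$, and hence inside $X$. In the nonorientable setting, the Kummer class becomes trivial in the presence of an embedded $\RP^2$ with this normal structure, in the way that $K_3$ becomes stably diffeomorphic to $E_8\#E_8$-type manifolds after summing with $\RP^4$. Concretely, I would use the standard fact that $K_3 \# \RP^4$ is stably diffeomorphic to $11(S^2\times S^2)\#\RP^4$, or equivalently that the $K_3$ generator dies in $\Omega_4^{\pinplus}$, where $K_3$ is trivial and $\RP^4$ generates $\Z/16$. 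Since the tubular neighbourhood $\RP^2\times I$ of the $\RP^2$ in $Y$ lets one embed the relevant local piece of $\RP^4$ (a neighbourhood $\RP^2 \tilde\times D^2$) compatibly in any filling, the $K_3$ summand can be traded across $Y$. In practice I would modify one of the fillings, say $M'$, by a self-bordism supported near $\RP^2\times I\times[0,1]$ that changes the 1-smoothing or absorbs a $K_3$ summand. This shows the kernel class of $X$ can be killed by a change of normal 1-smoothing on $M'$ fixing the boundary identification.

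Finally, I apply \cref{thm:kreckstable1type} to conclude that $M$ and $M'$ are stably diffeomorphic rel.\ boundary. The main obstacle is making the absorption step rigorous. This requires computing the kernel for every possible $G=\pi_1$ and twisting $w_1,w_2$ data, and then showing that the $\RP^2$ kills the generator uniformly. My first attempt would be a naturality argument: map the normal 1-type bordism group to the bordism of a neighbourhood of $\RP^2$, which is $\pinplus$ or $\pinminus$ bordism of $\RP^2$-type data, and check that the $K_3$ class maps to zero there while the map is injective on the kernel.
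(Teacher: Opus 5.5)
There is a genuine gap: your mechanism for killing the $K_3$ class rests on a false statement and on the wrong local model.

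\textbf{The false statement.} It is not true that $K_3\#\RP^4$ is stably diffeomorphic to $11(S^2\times S^2)\#\RP^4$, nor that $K_3$ dies in $\Omega_4^{\pinplus}$. That pair is Kreck's closed stably exotic example (\cref{thm:kreck-Z-2-1-1-has-closed-stable-exotica}), and $[K_3]=8\neq 0$ in $\Omega_4^{\pinplus}\cong\Z/16$ (\cref{prop:Kirby-taylor}). The normal 1-type data $(\Z/2,1,1)$ of $\RP^4$ is exactly the situation in which the $K_3$ class survives. The paper uses it in \cref{thm:pinplus-summary} to \emph{produce} stably exotic fillings.

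\textbf{The wrong local model.} The neighbourhood of $\RP^2$ in $\RP^4$ does not embed in a filling as you want. A two-sided $\RP^2\subset Y$, pushed into a filling $X$, has trivial normal bundle. By contrast, $\RP^2\subset\RP^4$ has normal bundle $2\gamma$, with $w_2=x^2\neq 0$.

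\textbf{The remaining steps do not close the gap.} A self-bordism supported near $\RP^2\times I\times[0,1]$ that absorbs a $K_3$ summand is not a meaningful operation, since vanishing of $[K_3]$ is a global bordism statement. Your fallback also fails, because bordism is covariant. There is no restriction map from $\Omega_4(\xi_G)$ to the bordism of a neighbourhood of $\RP^2$, injective on the kernel or otherwise.

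\textbf{The missing idea} is precisely that trivial normal bundle. It gives $w(\nu_X)|_{\RP^2}=w(\nu_{\RP^2})=1+x$. Let $g\colon\Z/2\to G=\pi_1(X)$ be induced by $\RP^2\to X$. Then $g^*w=x$ and $g^*v=0$, so $g^*(w^3-wv)=x^3\neq 0$ in $H^3(\Z/2;\Z/2)$, and hence $w^3\neq wv$ in $H^3(G;\Z/2)$. The necessary condition of \cref{thm:stably-exotic-dim3} then excludes closed stably exotic manifolds with data $(G,w,v)$. \cref{thm:main-characterisation}~(i) transfers this conclusion to fillings.

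Your naturality idea can be repaired if you run it in the covariant direction. By \cref{lemma:induced-map-on-1-types}, $g$ induces a map $\Omega_4(\xi_{(\Z/2,x,0)})\to\Omega_4(\xi_G)$. The source is normal $\pinplus$ bordism, i.e.\ tangential $\pinminus$ bordism, which vanishes in degree $4$. The $\xi_G$-structure on $K_3$ is unique, since $K_3$ is simply connected, so it factors through this map. Hence $[K_3]=0$, and the kernel of $\Omega_4(\xi_G)\to\Omega_4(\xi^{\Top}_G)$, which $[K_3]$ generates, is trivial.

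You also need to treat separately fillings with non-spin universal cover, to which the $\xi_G$ description does not apply. The paper handles these with Gompf's theorem.
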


\cref{thm:some-stably-exotic-fillings-exist,thm:some-stably-exotic-fillings-dont-exist} partially answer the following question, which remains open in general. 

\begin{question}\label{question:main}
Which closed, smooth 3-manifolds admit stably exotic fillings?
\end{question}

For example, we do not know whether the 3-manifold constructed in \cref{example:need-lagrangian-assumption}, a $T^2$-bundle over $S^1$, admits stably exotic fillings. 

We also remark that it is open which 3-manifolds admit exotic fillings, i.e.~fillings that are homeomorphic rel.\ boundary but not diffeomorphic rel.\ boundary; cf.~\citelist{\cite{etnye-min-mukherjee:pervious}\cite{K3}*{Problem~4.6}}. Another related open question asks which orientable 3-manifolds admit exotic orientable fillings.

\subsection*{Proof strategy}
We now elaborate further on the proofs of~\cref{thm:some-stably-exotic-fillings-exist,thm:some-stably-exotic-fillings-dont-exist}. For the argument outlined on the previous page to work, at a minimum the kernel 
of $\Omega_4(\xi_G) \to \Omega_4(\xi^{\Top}_G)$ must have a nontrivial element. Given any group $G$, together with a choice of abstract \emph{normal $1$-type data} (see ~\cref{section:norma-1-type-data}), we can hope to characterise precisely when this kernel is nontrivial, by considering differentials in the James spectral sequence computing~$\Omega_4(\xi_G)$, as in~\cite{Kasprowski-Powell}. Nontriviality of the kernel turns out to be equivalent to $[K_3] \neq 0$ in $\Omega_4(\xi_G)$. Given a $3$-manifold~$Y$, we can then seek to find, as we will do in the proof of \cref{thm:some-stably-exotic-fillings-exist}, a suitable group $G$ with normal $1$-type data compatible with $Y$, and a suitable nonorientable filling~$Z$ with~$\pi_1(Z)=G$, to realise the nontrivial element of the kernel by the manifold $M\cup_Y M'$ described above. This will prove we have constructed stably exotic fillings. In contrast, in the proof of \cref{thm:some-stably-exotic-fillings-dont-exist}, we show that in the presence of $\RP^2 \times I \subseteq Y$, for any possible $\xi_G$ we will have $[K_3]=0 \in \Omega_4(\xi_G)$.
See~\cref{section:norma-1-type-data} for more information on~$\xi_G$ and~$\xi_G^\Top$, and~\cref{section:partial-characterisation} for our compatibility condition for normal $1$-type data with~$Y$.

\subsection*{The cardinality of stably exotic fillings} To supplement the results above, within a fixed (stable) homeomorphism class of fillings, we show in \cref{prop-at-most-two} that  there are at most two stable diffeomorphism classes rel.\ boundary. So it is not possible to find any larger classes of stably exotic fillings than we find in~\cref{thm:some-stably-exotic-fillings-exist}. 
On the other hand, in \cref{prop:many} we show that modifying any given stably exotic pair by connected summing both with copies of~$S^1 \times S^3$, one can obtain infinitely many stable homeomorphism classes of pairs of stably exotic fillings for any fixed $Y$ that admits them, distinguished  by their fundamental groups. 

\subsection*{Examples}

Now we give some examples designed to illustrate the use of condition~\eqref{main-thm-item-iii} in \cref{thm:some-stably-exotic-fillings-exist} and to compare the three conditions of \cref{thm:some-stably-exotic-fillings-exist}. As mentioned earlier, \cref{thm:some-stably-exotic-fillings-exist}~\eqref{main-thm-item-iii} implies \cref{thm:some-stably-exotic-fillings-exist}~\eqref{main-thm-item-i}. 
There is some overlap between \eqref{main-thm-item-i} and \eqref{main-thm-item-ii}: for example,~\eqref{main-thm-item-i} implies~\eqref{main-thm-item-ii} when $F=S^2$. But in general both set differences between conditions~\eqref{main-thm-item-i} and~\eqref{main-thm-item-ii} are nonempty. 
Since \cref{thm:some-stably-exotic-fillings-exist}~\eqref{main-thm-item-ii} does not imply \cref{thm:some-stably-exotic-fillings-exist}~\eqref{main-thm-item-i} while \cref{thm:some-stably-exotic-fillings-exist}~\eqref{main-thm-item-iii} does imply \cref{thm:some-stably-exotic-fillings-exist}~\eqref{main-thm-item-i}, one sees that \cref{thm:some-stably-exotic-fillings-exist}~\eqref{main-thm-item-ii} does not imply \cref{thm:some-stably-exotic-fillings-exist}~\eqref{main-thm-item-iii}.

\medskip

Condition~\eqref{main-thm-item-iii} in \cref{thm:some-stably-exotic-fillings-exist} may seem difficult to check at first glance, in which case the following observations, which we use in the proofs of the upcoming examples, may help. First,~$Y$ admits a tangential $\pinplus$ structure if and only if the orientation character $w_1^Y\colon \pi_1(Y)\to\Z/2$ factors through $\Z/4$ (\cref{lemma:condition-for-pin-plus}). Second, a $\pinplus$ 3-manifold is null-bordant if and only if the spin surface dual to the orientation character is spin null-bordant (\cref{prop:Kirby-taylor}).

In the following example, we describe a family of $\pinplus$ nonorientable $3$-manifolds admitting stably exotic fillings, by applying~\cref{thm:some-stably-exotic-fillings-exist}~\eqref{main-thm-item-iii}. This demonstrates 
that there exist nonorientable 3-manifolds admitting a null-bordant tangential $\pinplus$ structure, so \cref{thm:some-stably-exotic-fillings-exist}~\eqref{main-thm-item-i} does not imply \cref{thm:some-stably-exotic-fillings-exist}~\eqref{main-thm-item-iii}.

\begin{example}\label{example:orientation-reversing-monodromy}
    Let $F$ be a closed, orientable surface and let $F \to Y \to S^1$ be a fibre bundle with orientation-reversing monodromy $\varphi \colon F \to F$. Note that $Y$ is nonorientable by construction. Suppose that the fixed subgroup $H^1(F;\Z/2)^{\varphi^*}$ is at least half-rank in $H^1(F;\Z/2)$. 
    We show in \cref{prop:lagrangian-fixed-point-stable-filling} that $Y$ admits a null-bordant $\pinplus$ structure, and hence admits stably exotic fillings by \cref{thm:some-stably-exotic-fillings-exist}~\eqref{main-thm-item-iii}. 
    We show in \cref{example:need-lagrangian-assumption} that the statement is false in general without the rank assumption on  $H^1(F;\Z/2)^{\varphi^*}$.
\end{example}

Next we give an example of a circle bundle over a surface ($\neq \RP^2$) which does not admit a tangential $\pinplus$ structure. This shows that 
 \cref{thm:some-stably-exotic-fillings-exist}~\eqref{main-thm-item-iii} does not imply \cref{thm:some-stably-exotic-fillings-exist}~\eqref{main-thm-item-ii}.
 
\begin{example}\label{example-intro-circle-bundle}
Let $F= \#^3 \RP^2$ be the closed surface of nonorientable genus three and $Y := S^1 \times F$.  In \cref{prop:circle-bundle} we show that the orientation character $w_1^Y\colon \pi_1(Y)\to\Z/2$ does not factor through $\Z/4$. 
By \cref{lemma:condition-for-pin-plus}, this shows that $Y$ does not admit a tangential $\pinplus$ structure.
\end{example}

We have therefore shown that the only logical dependency between the items of \cref{thm:some-stably-exotic-fillings-exist} is the aforementioned fact that \cref{thm:some-stably-exotic-fillings-exist}~\eqref{main-thm-item-iii} implies \cref{thm:some-stably-exotic-fillings-exist}~\eqref{main-thm-item-i}.

\subsection*{Organisation}
In \cref{section:norma-1-type-data} we introduce the key concept of normal 1-type data and explain the significance of this concept to the paper.
In \cref{section:orientable-case} we prove the orientable case of \cref{thm:some-stably-exotic-fillings-exist}.
In \cref{section:partial-characterisation} we relate the existence of stably exotic fillings to the existence of stably exotic closed 4-manifolds, under certain hypotheses. The main result is the partial characterisation in \cref{thm:main-characterisation}. In \cref{section:circle-bundles}, we apply \cref{thm:main-characterisation} to prove~\cref{thm:some-stably-exotic-fillings-exist}~\eqref{main-thm-item-ii}, the circle bundle case. In \cref{section:pin-bordism}, we prove \cref{thm:some-stably-exotic-fillings-exist}~\eqref{main-thm-item-iii}. 
In \cref{sec:examples} we provide the examples promised in \cref{example:orientation-reversing-monodromy,example-intro-circle-bundle}. 
In~\cref{section:non-existence}, we prove \cref{thm:some-stably-exotic-fillings-dont-exist}. Finally in \cref{sec:number-fillings}, we consider the number of stably exotic fillings of a given $3$-manifold.

\subsection*{Conventions}

We assume that all 3-manifolds are closed, connected, and smooth, and that all 4-manifolds are compact, connected, and smooth.

\subsection*{Acknowledgements}

We are grateful to Csaba Nagy for providing us with the proof of \cref{lemma:lagrangian-complement} that we use. 
MP was partially supported by EPSRC grant EP/T028335/2.

\subsection*{Open access}

For the purposes of open access, the authors have applied a CC-BY license to this version, and will do the same to any author--accepted manuscript arising from this submission.

\bigskip 

\section{Normal 1-type data}\label{section:norma-1-type-data}

Given a $3$-manifold $Y$, our strategy to prove that it admits stably exotic fillings requires, of course, the construction of stably homeomorphic fillings $M$ and $M'$, together with some obstruction to stable diffeomorphism. The first component will be furnished by a lemma of Kreck, detailed below as~\cref{lemma:homeomorphic-construction}. For the second component, we exploit the fact that stably homeomorphic fillings $M$ and $M'$ are stably diffeomorphic rel.~$Y$ if and only if $M\cup_Y M'$ is null-bordant over the normal $1$-type of $M$ (which is the same as the normal $1$-type of $M'$ as they are stably homeomorphic); see~\cref{thm:kreckstable1type}. We will first recall some general facts about normal $1$-types of manifolds~\cite{surgeryandduality}. We then recall some specific ideas for working with normal $1$-types of manifolds with spin universal cover, developed by Teichner~\cite{teichnerthesis}.

\subsection{Normal 1-types}

\begin{definition}
    A \emph{normal $1$-type} for a smooth manifold $W$ is a pair $(B,\xi)$, where~$B$ is a space with the homotopy type of a CW complex, $\xi\colon B\to \BO$ is a $2$-coconnected fibration with connected fibre, and there exists a $2$-connected map $\overline{\nu}_W\colon W\to B$, such that $\xi\circ\overline{\nu}_W$ represents the stable normal bundle $\nu_W \colon W \to \BO$ of $W$. Such a pair $(W,\overline{\nu}_W)$ is called a \emph{normal $1$-smoothing of $W$}.

    A \emph{topological normal $1$-type} for a manifold $W$ is a pair $(B^{\top},\xi^{\top})$, where~$B^{\top}$ is a space with the homotopy type of a CW complex, $\xi^{\top}\colon B^{\top}\to \BTop$ is a $2$-coconnected fibration with connected fibre, and there exists a $2$-connected map $\overline{\nu}^{\top}_W\colon W\to B^{\top}$, such that $\xi^{\top}\circ\overline{\nu}_W^{\top}$ represents the stable normal microbundle $\nu_W^{\top} \colon W \to \BTop$ of $W$. Such a pair $(W,\overline{\nu}_W^{\top})$ is called a \emph{topological normal~$1$-smoothing of $W$}. 
    
    Here $\BTop$ is the classifying space for $\top$, the colimit under inclusion of the groups $\top(n)$ for~$n\geq 0$, consisting of homeomorphisms of $\R^n$ preserving $0$.
\end{definition}

\begin{remark}
    It follows from the Moore--Postnikov factorisation (see e.g.~\cite{zbMATH03562120}) that every smooth manifold $W$ has a corresponding normal 1-type, and similarly every topological manifold~$W$ admits a topological normal $1$-type. This theory also guarantees uniqueness, up to fibre homotopy equivalence, of these objects.
\end{remark}

\begin{definition}\label{defn:xi-structure}
    Let $W$ be a smooth manifold and let $(B,\xi)$ be the normal $1$-type of some smooth manifold, not necessarily of $W$. A lift of the stable normal bundle of $W$ to $B$ is called a \emph{$\xi$-structure}.
\end{definition}

The relevance of the normal $1$-type technology to this article comes from the following theorem.

\begin{theorem}[\citelist{\cite{surgeryandduality}*{Corollary~3}\cite{crowleysixt}*{Lemma~2.2}}]
\label{thm:kreckstable1type}\leavevmode
  \begin{enumerate}[(i)]
        \item\label{item:kreck-thm-i}  Let $M$ and $M'$ be compact, smooth 4-manifolds with fixed identifications $\partial M = Y =\partial M'$, and the same normal 1-type $\xi \colon B \to \BO$. 
    Then $M$ and $M'$ are stably diffeomorphic rel.\ $Y$ if and only if there exist normal 1-smoothings  $\overline{\nu}_M$ and $\overline{\nu}_{M'}$, of $M$ and $M'$ respectively, that agree on $Y$, such that 
    \[[M \cup_Y M',\overline{\nu}_M \cup - \overline{\nu}_{M'}] = 0 \in \Omega_4(\xi).\]
 \item\label{item:kreck-thm-ii}
  Let $M$ and $M'$ be compact, topological 4-manifolds with fixed identifications $\partial M = Y =\partial M'$, and the same topological normal 1-type $\xi^{\Top} \colon B^{\Top} \to \BTop$.
 Then $M$ and $M'$ are stably homeomorphic rel.\ $Y$ if and only if there exist topological normal 1-smoothings  $\overline{\nu}_M^{\Top}$ and~$\overline{\nu}_{M'}^{\Top}$, of $M$ and $M'$ respectively, that agree on $Y$, such that 
    \[[M \cup_Y M',\overline{\nu}_M^{\Top} \cup - \overline{\nu}_{M'}^{\Top}] = 0 \in \Omega_4(\xi^{\Top}).\]
   \end{enumerate}
 \end{theorem}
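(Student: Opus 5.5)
This is Kreck's stable diffeomorphism theorem in its relative form. The plan is to deduce it from Kreck's modified surgery, treating the smooth case~\eqref{item:kreck-thm-i} in detail. The topological case~\eqref{item:kreck-thm-ii} then follows verbatim, using topological transversality and the fact that topological handle theory, surgery below the middle dimension and the $h$-cobordism machinery needed here are all available in dimension~$5$ for stable problems.

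\emph{The forward direction.} Suppose $\Phi\colon M\# k(S^2\times S^2)\to M'\# k(S^2\times S^2)$ is a diffeomorphism restricting to the identity on $Y$.
\begin{enumerate}[(1)]
\item Fix a normal $1$-smoothing $\overline{\nu}_{M'}$. It extends over the $S^2\times S^2$ summands, since these are simply connected and spin, so they do not change the normal $1$-type or the $2$-connectivity of the map to $B$.
\item Pull this smoothing back along $\Phi$ and restrict to $M$ minus the summands. This gives a normal $1$-smoothing $\overline{\nu}_M$ that agrees with $\overline{\nu}_{M'}$ on $Y$.
\item The mapping cylinder of $\Phi$, glued with $Y\times I$ along the boundary, is a $\xi$-nullbordism of $(M\#k(S^2\times S^2))\cup_Y -(M'\#k(S^2\times S^2))$.
\item Each $S^2\times S^2$ summand bounds a $\xi$-manifold, namely $S^2\times D^3$ attached as a boundary connected sum. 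Removing the summands therefore shows that $[M\cup_Y M']=0\in\Omega_4(\xi)$.
\end{enumerate}

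\emph{The reverse direction.} Suppose $W^5$ is a $\xi$-nullbordism of $M\cup_Y M'$.
\begin{enumerate}[(1)]
\item Regard $W$ as a relative cobordism from $M$ to $M'$, with the side $Y\times I$ fixed. Concretely, cut the boundary along $Y$ and attach $Y\times I$ as a collar.
\item By surgery below the middle dimension on $W$ relative to its boundary, make $W\to B$ a $2$-equivalence. The same holds for $M\to B$ and $M'\to B$, since these are normal $1$-smoothings. This step uses that $B\to\BO$ is $2$-coconnected, so that the relevant classes in $\pi_1$ and $\pi_2$ are represented by framed embedded spheres compatible with the $\xi$-structure.
\item It follows that the inclusions $M\to W$ and $M'\to W$ are $\pi_1$-isomorphisms and $\pi_2$-surjections.
\item Take a relative handle decomposition of $W$ on $M$. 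Cancel the $0$- and $5$-handles, and trade the $1$- and $4$-handles for $2$- and $3$-handles using the $\pi_1$-isomorphism.
\item The resulting $W$ has only $2$- and $3$-handles. The $2$-handles attach along null-homotopic circles in a $4$-manifold, hence along trivial circles. Their framings can be chosen compatible with the $\xi$-structure, and this is exactly where the condition on $w_2$ of the universal cover enters via $B$. So each $2$-handle adds a $S^2\times S^2$ summand rather than $S^2\tilde\times S^2$.
\item Dually, the $3$-handles seen from $M'$ are again $2$-handles producing $S^2\times S^2$ summands.
\item The middle level is therefore diffeomorphic to both $M\# k(S^2\times S^2)$ and $M'\# k(S^2\times S^2)$, rel.~$Y$.
\end{enumerate}

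\emph{Main obstacle.} The delicate step is the framing argument in step~(5) of the reverse direction: showing that the $2$-handles give $S^2\times S^2$ summands rather than twisted ones. The plan is to use that the $\xi$-structure on $W$ restricted to the core and cocore determines the framing. In the case where the universal cover is not spin, either framing would be allowed, but then $S^2\tilde\times S^2$ and $S^2\times S^2$ become interchangeable after summing with $M$, since $M\#S^2\tilde\times S^2\cong M\#S^2\times S^2$ when $w_2(\widetilde M)\neq0$. This resolves the ambiguity in that case.
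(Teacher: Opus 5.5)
The paper gives no proof of this statement; it quotes Kreck's Corollary~3 and Crowley--Sixt's Lemma~2.2. Your outline is essentially the standard handle-theoretic proof of Kreck's theorem with $q=2$, and its structure is right. The forward direction is fine. In the reverse direction, the key steps are correct: surgery making $W\to B$ a $2$-equivalence, the resulting isomorphisms $\pi_1(M)\cong\pi_1(W)\cong\pi_1(M')$, handle trading down to $2$- and $3$-handles, and $\pi_1$-injectivity to see that the attaching circles (of the $2$-handles in $M$, and of the dual $2$-handles in $M'$) are null-homotopic, hence trivial. There is, however, one genuine gap.

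The gap is in step (7): the number $k$ of $2$-handles and the number $l$ of $3$-handles need not agree. What you actually obtain is $M\#k(S^2\times S^2)\cong M'\#l(S^2\times S^2)$ rel $Y$, with $\chi(M)+2k=\chi(M')+2l$. This is a stable diffeomorphism in the paper's sense (same number of summands on both sides) only if $\chi(M)=\chi(M')$. No argument can close this without an extra hypothesis, because with the paper's convention the ``if'' direction is false in general. Take $M=D^4$ and $M'=(S^2\times S^2)\setminus\mathring{D}^4$: both have normal $1$-type $\BSpin$, and $M\cup_{S^3}M'\cong S^2\times S^2$ is spin null-bordant, yet $\chi(M')=\chi(M)+2$. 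So you must either assume $\chi(M)=\chi(M')$, or use Kreck's convention allowing different numbers of summands. The hypothesis holds in every application in the paper, since there $M$ and $M'$ are stably homeomorphic rel $Y$. The same remark applies to (ii).

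There are also three smaller points.

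First, in step (3), $\pi_2$-surjectivity of $M\to W$ does not follow, since $\pi_2(W)\to\pi_2(B)$ need not be injective after surgery below the middle dimension. For example, take $W=(D^4\times I)\#(S^2\times S^3)$ as a cobordism from $D^4$ to $D^4$. You never use this claim, so simply delete it.

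Second, the framing argument in step (5) can be made precise as follows. The middle level $M_{1/2}$ is a two-sided level set of $W$, so it inherits a $\xi$-structure. Hence $w_2$ on $\pi_2(M_{1/2})$ factors through $\pi_2(B)\into\pi_2(\BO)$. If $\pi_2(B)=0$, an $S^2\wt\times S^2$ summand is impossible. If $\pi_2(B)\neq 0$, then $w_2(\wt M)\neq 0$ and your observation $M\#(S^2\wt\times S^2)\cong M\#(S^2\times S^2)$ applies.

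Third, for (ii) ``verbatim'' hides the real input. You need topological transversality and Quinn's existence of handle decompositions for topological $5$-dimensional cobordisms (as in Freedman--Quinn). With those in hand, the argument goes through unchanged.
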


\subsection{Normal 1-type data}\label{subsec:normal-1type-date}

When the universal cover of a manifold $W$ is spin, there exists a 2-connected classifying map $c \colon W \to B\pi_1(W)$, and unique cohomology classes $w_i^W \in H^i(\pi_1(W);\Z/2)$, for $i=1,2$, such that $c^*(w_i^W) = w_i(\nu_W)$, where $\nu_W$ denotes the stable normal bundle of $W$. In this case, the fibre homotopy type of the normal $1$-type is determined by the isomorphism class of the triple $(\pi_1(W), w_1^W, w_2^W)$, in both the smooth and topological categories~\cite{teichnerthesis}*{Theorem~2.2.1~(a)}. 

Using this, we can describe an algebraic abstraction that will permit us to consider putative $4$-manifold fillings of $3$-manifolds. The following definitions are due to Teichner~\cite{teichnerthesis}*{Theorem~2.2.1~(b)(IV)}.

\begin{definition}\label{def:key-pullback}
A triple $(G,w,v)$ consisting of a finitely presented group $G$, an element $w \in H^1(G;\Z/2)$, and an element $v \in H^2(G;\Z/2)$ is called \emph{normal 1-type data}. 

Given normal $1$-type data, we define an associated pair $(B_G,\xi_G)$, where $B_G$ is a space homotopy equivalent to a CW complex and $\xi_G\colon B_G\to \BO$ is a fibration, as follows. Represent $(w,v)$ via a map $(w,v) \colon K(G,1) \xrightarrow{} K(\Z/2,1) \times K(\Z/2,2)$. 
Let $P_2(-) \colon \BO \to K(\Z/2,1) \times K(\Z/2,2)$ denote the map from $\BO$ to its Postnikov 2-type. 
We  write $B_G$ for the homotopy fibre in 
\begin{equation}\label{eqn:defn-of-B_G}
B_G \xrightarrow{\zeta_G} \BO \times K(G,1) \xrightarrow{P_2(-) + (w \times v)} K(\Z/2,1) \times K(\Z/2,2), 
\end{equation}
with $\zeta_G$ further assumed to be a fibration. 
Here the operation $+$ uses the $H$-space structure on $K(\Z/2,1) \times K(\Z/2,2)$. 
Define the fibration
\[\xi_G:=\pr_1\circ\,\, \zeta_G\colon B_G\to \BO.\]

We similarly define an associated pair $(B^{\top}_G,\xi^\top_G)$, where $B^{\top}_G$ is a space homotopy equivalent to a CW complex and $\zeta^\top_G\colon B^{\top}_G\to \BTop$ is a fibration, as follows. Write $B^{\top}_G$ for the homotopy fibre in 
\begin{equation}\label{eqn:defn-of-B_G-top}
B^{\top}_G \xrightarrow{\zeta^{\top}_G} \BTop \times K(G,1) \xrightarrow{P_2(-) + (w \times v)} K(\Z/2,1) \times K(\Z/2,2), 
\end{equation}
with $\zeta^{\top}_G$ a fibration, again using the $H$-space structure on $K(\Z/2,1) \times K(\Z/2,2)$. 
Define the fibration
\[\xi^{\top}_G:=\pr_1\circ\,\, \zeta^{\top}_G\colon B_G\to\BTop.\]
\end{definition}

\begin{remark}\label{remark:les-homotopy-groups}
       Using the long exact sequence in homotopy groups of the fibrations in~\eqref{eqn:defn-of-B_G} and~\eqref{eqn:defn-of-B_G-top}, it is straightforward to check that $\pi_1(B_G)\cong \pi_1(B^{\top}_G)\cong G$, $\pi_2(B_G) = 0 = \pi_2(B_G^{\top})$, and that both~$\xi_G$ and $\xi^{\top}_G$ are 2-coconnected. Also, note that the map $\BO\to \BTop$ induces a forgetful map 
    \[
    \eta_G\colon B_G\to B_G^{\top}.\
    \]
\end{remark}

For a smooth manifold $W$ whose universal cover is spin, the respective normal $1$-type and topological normal $1$-type are
\[
(B_W,\xi_W)=(B_G,\xi_G)\qquad\text{and}\qquad (B^{\top}_W,\xi^{\top}_W)=(B^{\top}_G,\xi^{\top}_G),
\]
constructed using the normal 1-type data $(G:=\pi_1(W), w_1^W, w_2^W)$, as described above~\cite{teichnerthesis}*{Theorem~2.2.1~(b)(IV)}. This explains the following definition.

\begin{definition}
    Let $W$ be a smooth manifold with universal cover spin. If the abstract normal $1$-type data $(G, w, v)$ is isomorphic to $(\pi_1(W), w_1^W, w_2^W)$, we will say that~\emph{$W$ has normal $1$-type data $(G, w, v)$}.
\end{definition}

Given normal $1$-type data $(G,w,v)$,
the bordism groups $\Omega_4(\xi_G)$ and $\Omega_4(\xi^{\Top}_G)$ can in principle be computed using \emph{James spectral sequences}.
These have the form  \[E^2_{p,q}=H_p(G;(\Omega_q^{\Spin})^w) \, \Rightarrow \, \Omega_*(\xi_G)\] and
\[(E^{2}_{p,q})^{\Top}=H_p(G;(\Omega_q^{\TopSpin})^w) \, \Rightarrow \, \Omega_*(\xi^{\Top}_G),\]
for $p,q \geq 0$, respectively. 
Here the coefficients are twisted using the class $w\in H^1(G;\Z/2)$ that pulls back to the orientation character.  
For $\xi_G$ and $\xi^{\Top}_G$ orientable these were constructed by Teichner~\cite{teichner-signature}*{Section~II}.
For~$\xi_G$ and $\xi^{\Top}_G$ nonorientable, 
if there exists a vector bundle $E \to K(G,1)$ such that $w_1(E) = w \in H^1(G;\Z/2)$ and $w_2(E) = v \in H^2(G;\Z/2)$, then the adaptation of the James spectral sequence was constructed in \cite{teichner-star}*{Before~Lemma~2}. 
The general case was constructed in detail by Galvin--Teichner--Vesel\'a in~\cite{GTV-xi-fillings}*{Appendix~B}.

\begin{remark}\label{rem:3manhave1type}
    All $3$-manifolds have universal cover spin because all orientable $3$-manifolds are spin. The fillings of $3$-manifolds we will produce will have normal $1$-types corresponding to the, more algebraic, normal $1$-type data we defined above. Hence we will not produce any fillings with non-spin universal cover.  It is possible that a $3$-manifold might have a pair of exotic fillings whose universal covers are non-spin. However such a pair would be stably diffeomorphic~\citelist{\cite{Gompf84}\cite{FNOP}*{Theorem~13.3}},
    so the methods of this paper would not detect them. Hence from now on we will only consider manifolds with spin universal cover.
\end{remark}

\section{All orientable 3-manifolds admit stably exotic fillings}\label{section:orientable-case}

We begin the proof of~\cref{thm:some-stably-exotic-fillings-exist}. We start with a lemma due to Kreck~\cite{Kreck84}, which gives the key construction of potentially exotic nonorientable 4-manifolds, making use of the~$K_3$ surface. This lemma will be used in this section and in subsequent sections.

\begin{lemma}[\cite{Kreck84}]\label{lemma:homeomorphic-construction}
    Let $X$ be a compact, nonorientable 4-manifold. Let $M := X\#K_3$ and let $M' :=   X \# 11(S^2 \times S^2)$. Then $M$ and $M'$ are homeomorphic $($and hence stably homeomorphic$)$ rel.\ boundary. 
\end{lemma}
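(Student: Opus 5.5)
The plan is to use Freedman's topological splitting of $K_3$ together with the standard fact that, in a nonorientable 4-manifold, connected sum with $E_8\# E_8$ (or rather with the $E_8$-manifold and its reverse) can be traded for $S^2\times S^2$ summands. Freedman's classification gives a homeomorphism $K_3 \cong E_8 \# E_8 \# 3(S^2\times S^2)$, where $E_8$ is the closed, simply connected topological 4-manifold with intersection form $E_8$. Hence
\[
M = X \# K_3 \cong X \# E_8 \# E_8 \# 3(S^2\times S^2),
\]
via a homeomorphism supported away from $\partial X$, so it is the identity on the boundary. It therefore suffices to show that
\[
X \# E_8 \# E_8 \cong X \# 8(S^2\times S^2)
\]
rel.\ boundary. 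Since $X$ is nonorientable, connected sum with any oriented manifold $N$ is independent of the orientation of $N$: sliding the summand around an orientation-reversing loop gives $X\# N \cong X \# \overline{N}$ rel.\ $\partial X$. Thus $X \# E_8 \# E_8 \cong X \# E_8 \# \overline{E_8}$.

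Next, $E_8\#\overline{E_8}$ is a closed, simply connected topological 4-manifold with even, indefinite, unimodular intersection form of rank $16$ and signature $0$. By the classification of such forms this form is $8H$, and its Kirby--Siebenmann invariant is $\mathrm{ks}(E_8)+\mathrm{ks}(\overline{E_8}) = 1+1 = 0$. Freedman's classification then gives $E_8\#\overline{E_8}\cong 8(S^2\times S^2)$. Combining the steps,
\[
M \cong X \# 8(S^2\times S^2)\# 3(S^2\times S^2) = M',
\]
and every homeomorphism used is supported in the interior, so the composite is the identity on $\partial M = \partial M'$. Stable homeomorphism follows trivially, taking $k=0$.

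The only step needing care is the orientation-switching claim $X\#N\cong X\#\overline N$ rel.\ boundary. I would prove it by isotoping the attaching ball along an embedded orientation-reversing loop in the interior of $X$; this returns the ball with its identification composed with a reflection, so the glued manifolds are homeomorphic by a map that is the identity outside a neighbourhood of the loop. The loop can be chosen disjoint from $\partial X$, which is why the homeomorphism fixes the boundary.
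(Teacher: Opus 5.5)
Your proposal is correct and follows the same route as the paper. You split $K_3 \cong E_8\# E_8\# 3(S^2\times S^2)$ via Freedman, use nonorientability of $X$ to reverse the orientation of one $E_8$ summand by isotoping the attaching ball, and identify $E_8\#\overline{E_8}$ with copies of $S^2\times S^2$ via Freedman and the classification of indefinite forms. Your explicit Kirby--Siebenmann check and your observation that every homeomorphism is supported in the interior (hence fixes the boundary) are details the paper leaves implicit.
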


\begin{proof}
We have
\[M = X\#K_3 \cong X\# E_8 \# E_8 \# 3(S^2 \times S^2) \cong X\# E_8 \# \ol{E}_8 \# 3(S^2 \times S^2) \cong X \# 11(S^2 \times S^2) = M'.\]
Here we use the classification of closed, simply connected 4-manifolds~\cite{Freedman-82}, and the fact that indefinite, symmetric, bilinear forms are determined up to isometry by their rank, signature, and parity~\cite{Milnor-Husemoller}, to see that $K_3 \cong  E_8 \# E_8 \# 3(S^2 \times S^2)$ and that $E_8 \# \ol{E}_8 \# 3(S^2 \times S^2) \cong \# 11(S^2 \times S^2)$.    To replace one copy of $E_8$ by $\ol{E}_8$ in the central homeomorphism we use that $X$ is nonorientable, so that the orientation of the 4-ball in $X$ used for the connected sum can be reversed by an ambient isotopy. 
\end{proof}

Using this lemma and \cref{thm:kreckstable1type}, we can prove~\cref{thm:some-stably-exotic-fillings-exist}~\eqref{main-thm-item-i}.

\begin{theorem}\label{thm:orientable-case}
    Let $Y$ be a closed, orientable 3-manifold. Then $Y$ admits a pair of stably exotic fillings~$M$ and $M'$. 
\end{theorem}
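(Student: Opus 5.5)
We construct the fillings via \cref{lemma:homeomorphic-construction} and detect non-stable-diffeomorphism via \cref{thm:kreckstable1type}~\eqref{item:kreck-thm-i}. Since $Y$ is orientable, it bounds a compact orientable 4-manifold; in fact we take a spin filling $W$ (every orientable 3-manifold is spin, and $\Omega_3^{\Spin}=0$). After surgery on circles in the interior we may assume $W$ is simply connected. Set
\[Z := W \# \RP^4 \quad\text{(or } W\#(S^1\tilde\times S^3)\text{, or } W \# \RP^2\times S^2\text{)},\]
a nonorientable filling of $Y$. Then put $M := Z\# K_3$ and $M' := Z\#11(S^2\times S^2)$. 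By \cref{lemma:homeomorphic-construction}, $M$ and $M'$ are homeomorphic rel.\ $Y$.

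Both have the same universal cover type: $M$ is spin up to the $\RP^4$ summand, and the universal cover is spin. Both also have normal $1$-type data $(G,w,v)=(\pi_1(Z),w_1^Z,w_2^Z)$. The natural first choice is $Z=W\#\RP^4$, giving $G=\Z/2$, $w\neq 0$, and $v$ equal to $w_2$ of $\RP^4$ pulled back. For normal bundle conventions this gives the $\pinplus$ case, $\Omega_4^{\pinplus}\cong\Z/16$, generated by $\RP^4$, with $[K_3]=8\neq 0$.

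Next apply \cref{thm:kreckstable1type}~\eqref{item:kreck-thm-i}. Suppose $M$ and $M'$ were stably diffeomorphic rel.\ $Y$. Then for some normal $1$-smoothings agreeing on $Y$, the class $[M\cup_Y M']$ would vanish in $\Omega_4(\xi_G)$. Now
\[M\cup_Y M' \cong (Z\cup_Y Z)\#K_3\#11(S^2\times S^2),\]
where the connected sums are taken in the two copies. The double $Z\cup_Y Z$ is $D(W)\#\RP^4\#\RP^4$, with the smoothings glued from the same smoothing on $Z$ (reversed).

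The key claim is that for any choice of smoothings agreeing on $Y$, the double part contributes an element of a controlled subgroup, while $K_3$ contributes $[K_3]$, and the total is nonzero. Doubles bound: $Z\cup_Y -Z=\partial(Z\times I)$, so with identical smoothings that part is null-bordant. But different smoothings on $Z$ and on the copy in $M'$ that agree on $Y$ differ by the action of structure automorphisms, namely $H^1(Z;\Z/2)$ and $H^0$ twists. So the difference class lies in a computable subgroup. Also, $S^2\times S^2$ is null-bordant with any structure.

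The main obstacle is showing $[K_3]\neq 0$ in $\Omega_4(\xi_G)$ modulo the ambiguity from changing normal $1$-smoothings rel.\ $Y$. I would first aim to choose $Z$ so that $\Omega_4(\xi_G)\cong\Omega_4^{\pinplus}\cong\Z/16$. Then I would track the possible smoothings explicitly. Reversing the $\pinplus$ structure on an $\RP^4$ summand changes its class from $\pm1$ to $\mp1$, so the ambiguity lies in even multiples of the generator but presumably never equals $8$. If that bookkeeping is delicate, I would instead use an invariant that is insensitive to the choice of smoothing near $Y$, such as the Rochlin/$\eta$-type $\Z/2$ invariant detecting $8\in\Z/16$. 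Alternatively, I would use the James spectral sequence: the kernel of $\Omega_4(\xi_G)\to\Omega_4(\xi_G^\Top)$ is the $E_{0,4}$-edge image generated by $[K_3]$, and changing smoothings by automorphisms preserves it. This would show the class is nonzero, so $M$ and $M'$ are not stably diffeomorphic rel.\ $Y$.
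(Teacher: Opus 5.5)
\textbf{There is a genuine gap.} Your construction with $Z=W\#\RP^4$ is viable: the normal $1$-type data is $(\Z/2,1,1)$, and indeed $\Omega_4(\xi_{\Z/2})\cong\Omega_4^{\pinplus}\cong\Z/16$. But the decisive step is left open. \cref{thm:kreckstable1type}~\eqref{item:kreck-thm-i} requires $[M\cup_Y M',\overline{\nu}_M\cup-\overline{\nu}_{M'}]\neq 0$ for \emph{every} pair of normal $1$-smoothings agreeing on $Y$. You only say the ambiguity consists of even multiples of the generator and ``presumably'' avoids $8$. But $8$ is itself an even multiple, and the subgroup generated by the ambiguity is $2\Z/16\ni 8$, so no argument that works modulo a subgroup can succeed.

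This is also why your fallbacks fail. Every homomorphism $\Z/16\to\Z/2$ kills $8$, so no $\Z/2$-valued bordism invariant that is insensitive to the choice of smoothing can detect $[K_3]$. The spectral-sequence remark does not help either: knowing that the kernel of $\Omega_4(\xi_G)\to\Omega_4(\xi_G^{\Top})$ is $\{0,[K_3]\}$ does not tell you which value a given pair of smoothings produces, and that is exactly the question. Changing the smoothing on only one of $M$, $M'$ is not an automorphism of the normal $1$-type applied to the whole class.

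What is needed is the exact set of values, and you already have the ingredient. Since $\pi_1(Y)\to\pi_1(M)$ is trivial, $H^1(M,Y;\Z/2)\cong H^1(M;\Z/2)\cong\Z/2$, and the fibre of $\xi_{\Z/2}$ is $K(\Z/2,1)$. So rel.\ $Y$ each of $M$, $M'$ carries exactly two $\xi_{\Z/2}$-structures, differing only in the $\pinplus$ structure on the $\RP^4$ summand; the $W$, $K_3$ and $S^2\times S^2$ summands are simply connected and spin. As you note, the two $\pinplus$ structures on $\RP^4$ represent $\pm1$, so
\[[M\cup_Y M',\overline{\nu}_M\cup-\overline{\nu}_{M'}]=8+\epsilon-\epsilon'\in\{6,8,10\},\qquad \epsilon,\epsilon'\in\{\pm1\},\]
which is never $0$. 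With this your route closes. It is Kreck's closed argument for $\RP^4\#K_3$ moved to the relative setting.

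Your alternatives for $Z$ are not interchangeable, however. For $Z=W\#(\RP^2\times S^2)$ one has $w(\nu_{\RP^2\times S^2})=1+x$, so the normal $1$-type data is $(\Z/2,w\neq 0,0)$. The relevant bordism group is then tangential $\Omega_4^{\pinminus}=0$, and the resulting $M$, $M'$ \emph{are} stably diffeomorphic rel.\ $Y$; this is the $w^3\neq wv$ obstruction of \cref{section:non-existence}.

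The choice $Z=W\#(S^1\wt{\times}S^3)$ is the paper's, and it removes the bookkeeping entirely. There $\Omega_4(\xi_\Z)\cong\Z/2$ by the James spectral sequence, detected by $\sigma/16$ of a simply connected spin representative. Moreover $S^1\wt{\times}S^3=\partial(S^1\wt{\times}D^4)$ is null-bordant for every $\xi_\Z$-structure. So for arbitrary smoothings agreeing on $Y$, the union is bordant to $(W\#K_3)\cup_Y(W\#11(S^2\times S^2))$, which has signature $16$.
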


\begin{proof}
Since $Y$ is orientable, it admits a compact, simply connected, spin filling $X$.
Define 
\[M := X \# (S^1 \wt{\times} S^3) \# K_3 \quad\text{ and }\quad M' := X \# (S^1 \wt{\times} S^3) \# 11(S^2 \times S^2),\] 
where $S^1 \wt{\times} S^3$ is the nonorientable $S^3$ bundle over $S^1$. 
Then $M$ and $M'$ are homeomorphic by \cref{lemma:homeomorphic-construction}, and they both have the normal $1$-type data $(\Z,w,0)$ with $0\neq w\in H^1(\Z;\Z/2) \cong \Z/2$. 

If $M$ and $M'$ were stably diffeomorphic relative to $Y$, then by~\cref{thm:kreckstable1type}~\eqref{item:kreck-thm-ii} there would exist normal $1$-smoothings $\ol{\nu}_M$ and $\ol{\nu}_{M'}$ of $M$ and $M'$ that agree on $Y$ and such that $
[M\cup_Y M', \ol{\nu}_M \cup - \ol{\nu}_{M'}] = 0\in \Omega_4(\xi_\Z)$. We show next that this is not the case. 

Let $\overline{\nu}_M$ and $\overline{\nu}_{M'}$ be arbitrary normal $1$-smoothings of $M$ and $M'$ that agree on $Y$. 
The group $\Omega_4(\xi_\Z)$ is computed by a James spectral sequence with $E^2_{p,q} \cong H_p(\Z;(\Omega_q^{\Spin})^w)$; see~\cite{teichnerthesis} and~\cite{GTV-xi-fillings}*{Appendix~B}.   
Since $H_p(\Z;(\Omega_q^{\Spin})^w)=0$ for $p>1$ or $q=3$, and $w\neq 0$,
we have that 
\[\Omega_4(\xi_\Z) \cong H_0(\Z;(\Omega_4^{\Spin})^w) \cong H_0(\Z;\Z^-)\cong \Z/2,\]
and the map $16\Z\cong \Omega_4^\spin\to \Omega_4(\xi_\Z)\cong \Z/2$ is given by $16\mapsto 1$. In particular, every element~$[N,g]$ of~$\Omega_4(\xi_\Z)$ is bordant over $\xi_\Z$ to a simply connected, spin 4-manifold $N'$ (with some $\xi_\Z$-structure, as in \cref{defn:xi-structure}), and $[N,g]$ is trivial if and only if  $\sigma(N')/16\in \Z/2$ is trivial, where $\sigma(N')$ denotes the signature. 

For the pair
$(N,g)=(M\cup_Y M', \ol{\nu}_M \cup - \ol{\nu}_{M'})$, such a simply connected~$\xi_\Z$-bordant spin $4$-manifold is provided by $N'=(X\#K_3)\cup_Y (X\#11(S^2\times S^2))$  (with some $\xi_\Z$-structure). To see this, we claim that, since $S^1\wt\times S^3$ bounds $S^1\wt \times D^4$, we have~$[S^1\wt\times S^3]=0\in \Omega_4(\xi_\Z)$ for every choice of~$\xi_{\Z}$-structure. 

Now we prove the claim. First we note that $B_\Z$ is a homotopy fibre, so it suffices to show that the maps to $\BO$ and $K(\Z,1)$ extend and the extensions are compatible in $K(\Z/2,1) \times K(\Z/2,2)$. The map to $K(\Z,1) \simeq S^1$ extends by basic obstruction theory. Specifically, we note that $S^1 \wt{\times} D^4$ is obtained from $S^1 \wt{\times} S^3$ by adding a  4-cell and a 5-cell, and $\pi_3(S^1) = 0 = \pi_4(S^1)$.
Furthermore, the stable normal bundle $\nu_{S^1\wt{\times}D^4}\colon S^1\wt{\times}D^4\to \BO$ restricts to $\nu_{S^1\wt{\times}S^3}\colon S^1\wt{\times}S^3\to \BO$ on the boundary, providing the desired extension.  
The extensions are compatible in $K(\Z/2,1) \times K(\Z/2,2)$ since~$w_1(S^1 \wt{\times} D^4) \neq 0$ and $w_2(S^1 \wt{\times} D^4)=0$.

The signature of $(X\#K_3)\cup_Y -(X\#11(S^2\times S^2))$ is the signature of $K_3$, which is $16$. Thus $
[M\cup_Y M', \ol{\nu}_M \cup - \ol{\nu}_{M'}] \neq 0\in \Omega_4(\xi_\Z)$ and $M$ is not stably diffeomorphic to $M'$ rel.\ boundary, as claimed.
\end{proof}

\section{A partial characterisation of 3-manifolds admitting stably exotic fillings}\label{section:partial-characterisation}

In order to prove \cref{thm:some-stably-exotic-fillings-exist}~\eqref{main-thm-item-ii} and \eqref{main-thm-item-iii}, first we deduce a partial characterisation of $3$-manifolds admitting stably exotic fillings. As the universal cover of an arbitrary $3$-manifold is spin, the $3$-manifold determines normal $1$-type data as described in \cref{subsec:normal-1type-date}. Now we introduce the notion of abstract normal~$1$-type data for a putative filling, whose normal~$1$-type data would interact correctly with the normal~$1$-type of $Y$.

\begin{definition}\label{def:relY}
Let $Y$ be a closed, smooth $3$-manifold.
	The quadruple $(G,w,v,\varphi)$ is called 
    \emph{$Y$-filling normal 1-type data}
    if $(G,w,v)$ comprises normal 1-type data (not necessarily for $Y$), and $\varphi\colon \pi_1(Y)\to G$ is a homomorphism such that~$\varphi^*(w) = w_1^Y \in H^1(Y;\Z/2)$ and $\varphi^*(v) = w_2^Y \in H^2(Y;\Z/2)$.
\end{definition}
In the definition above, one should think of $G$ as the fundamental group of the putative filling of~$Y$, the map $\varphi$ as the putative inclusion-induced map, and $w$ and $v$ as the putative first and second Stiefel--Whitney classes of the filling.

\begin{proposition}\label{lemma:induced-map-on-1-types}
Let $(G,w,v)$ and $(H,x,y)$ be normal 1-type data.
  Let $\phi \colon G \to H$ be a homomorphism such that $\phi^*(x) = w$ and $\phi^*(y) = v$. Then there exists a map $\phi_B \colon B_G \to B_H$ covering $\id_{\BO} \times \phi \colon \BO \times K(G,1) \to \BO \times K(H,1)$, up to homotopy. Since $\xi_H$ is a fibration, we can assume that $\xi_H\circ \phi_B=\xi_G$.
\end{proposition}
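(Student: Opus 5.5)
The plan is to use the universal property of homotopy fibres. Write $K_2 := K(\Z/2,1)\times K(\Z/2,2)$. Let $f_G := P_2(-)+(w\times v)\colon \BO\times K(G,1)\to K_2$, and define $f_H$ similarly using $(x,y)$. Let $\Phi := \id_{\BO}\times K(\phi,1)$, where $K(\phi,1)\colon K(G,1)\to K(H,1)$ is a map realising $\phi$ on $\pi_1$; such a map exists and is unique up to homotopy.

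The first step is to show that $f_H\circ\Phi\simeq f_G$. A map into $K_2$ is determined up to homotopy by the pair of cohomology classes it pulls back, one in degree $1$ and one in degree $2$ (with $\Z/2$ coefficients). Both composites restrict on the $\BO$ factor to $P_2$, which gives $(w_1,w_2)$. On the $K(G,1)$ factor, $f_H\circ\Phi$ gives $(\phi^*x,\phi^*y)$ and $f_G$ gives $(w,v)$, and these agree by hypothesis. The $H$-space sum $+$ corresponds to adding cohomology classes, so in both cases the pulled-back classes are $\pr_1^*w_1+\pr_2^*w$ and $\pr_1^*w_2+\pr_2^*v$ (with $w=\phi^*x$, $v=\phi^*y$). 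Hence the two composites represent the same classes in $H^1\oplus H^2$ of $\BO\times K(G,1)$ and are homotopic. Fix such a homotopy $h$.

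The second step uses functoriality of homotopy fibres. Model $B_G$ as the space of pairs $(p,\gamma)$, where $p\in \BO\times K(G,1)$ and $\gamma$ is a path in $K_2$ from $f_G(p)$ to the basepoint; model $B_H$ in the same way. Define
\[\phi_B(p,\gamma) := \bigl(\Phi(p),\ \overline{h(p,-)}\ast\gamma\bigr).\]
Concatenating the reversed homotopy track $\overline{h(p,-)}$, which runs from $f_H\Phi(p)$ to $f_G(p)$, with $\gamma$ gives a path from $f_H\Phi(p)$ to the basepoint. By construction $\zeta_H\circ\phi_B=\Phi\circ\zeta_G$ on these models. Since the actual $B_G$, $B_H$ are fibration replacements homotopy equivalent to these models over $\BO\times K(\cdot,1)$, the square commutes up to homotopy. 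In particular $\xi_H\circ\phi_B\simeq \pr_1\circ\Phi\circ\zeta_G=\xi_G$, because $\Phi$ is the identity on the $\BO$ factor.

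The third step makes this equality strict. Since $\xi_H$ is a fibration, apply the homotopy lifting property to a homotopy from $\xi_H\circ\phi_B$ to $\xi_G$, with initial lift $\phi_B$. The end of the lifted homotopy is a map homotopic to $\phi_B$ that satisfies $\xi_H\circ\phi_B=\xi_G$ exactly. We replace $\phi_B$ by this map. It still covers $\Phi$ up to homotopy, since it is homotopic to the original $\phi_B$.

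I expect the main obstacle to be the first step, namely justifying that the homotopy class of a map into $K_2$ is detected additively by these cohomology classes, compatibly with the $H$-space sum. This follows from $[X,K_2]\cong H^1(X;\Z/2)\times H^2(X;\Z/2)$ as groups. The remaining steps are formal.
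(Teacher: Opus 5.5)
Your proposal is correct and follows the paper's route. The hypotheses $\phi^*(x)=w$ and $\phi^*(y)=v$ make the square into $K(\Z/2,1)\times K(\Z/2,2)$ commute up to homotopy, and this induces the map of homotopy fibres; your explicit path-space formula and the homotopy-lifting step for $\xi_H\circ\phi_B=\xi_G$ fill in details the paper leaves implicit (and in the standard path-space model that equality already holds on the nose, since $\id_{\BO}\times\phi$ is the identity on the $\BO$ factor).
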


\begin{proof}
    The assumption that $\phi^*(x) = w$ and $\phi^*(y) = v$ implies that the diagram
    \[
    \begin{tikzcd}
        \BO\times K(G,1)\ar[r,"\id_{\BO}\times\phi_*"]\ar[d,"P_2(-) + (w \times v)"]&\BO\times K(H,1) \ar[d,"P_2(-) + (x \times y)"]\\
        K(\Z/2,1) \times K(\Z/2,2)\ar[r,"\id"]&K(\Z/2,1) \times K(\Z/2,2)
    \end{tikzcd}
    \]
    commutes up to homotopy. Hence there is a map $\phi_B \colon B_G \to B_H$ of the homotopy fibres covering $\id_{\BO} \times \phi \colon \BO \times K(G,1) \to \BO \times K(H,1)$ up to homotopy, as needed.
\end{proof}

\begin{remark}\label{rem:bspin-maps-to-BsubG}
When $G$ is the trivial group, $B_G\simeq \BSpin$. Hence by choosing $G$ to be the trivial group in \cref{lemma:induced-map-on-1-types}, we obtain a map $\BSpin\to B_H$ over $\BO$, for any normal $1$-type data $(H,x,y)$. This determines a map $\Omega_4^{\Spin} \to \Omega_4(B_H)$, and so every spin 4-manifold has a $\xi_H$-structure. In the proof of \cref{thm:orientable-case} we produced an instance of this map for $H=\Z$ by analysing the James spectral sequence; this method generalises to arbitrary $H$. 
\end{remark}

\begin{corollary}\label{cor:existence-varphi-B}
    Let $Y$ be a closed, smooth $3$-manifold and let $(H,x,y,\varphi)$ be $Y$-filling normal 1-type data. 
    Then there is a map $\varphi_B \colon B_Y \to B_G$ over $\BO$ inducing~$\varphi$ on fundamental groups.
Hence, for every normal $1$-smoothing $\ol{\nu}_Y$ of $Y$, we obtain a corresponding $\xi_G$-structure $\varphi_B \circ \ol{\nu}_Y \colon Y \to B_G$ inducing $\varphi$ on fundamental groups. 
\end{corollary}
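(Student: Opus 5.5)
We apply \cref{lemma:induced-map-on-1-types} to the normal $1$-type data of $Y$ itself and to the given $Y$-filling normal $1$-type data. (The statement writes $(H,x,y,\varphi)$ but then uses $G$; we read the target as $B_G$ for the data $(G,w,v,\varphi)$.) By \cref{rem:3manhave1type}, the universal cover of $Y$ is spin. So by \cite{teichnerthesis}*{Theorem~2.2.1~(b)(IV)}, as recalled in \cref{subsec:normal-1type-date}, the normal $1$-type of $Y$ is $(B_Y,\xi_Y)=(B_{\pi_1(Y)},\xi_{\pi_1(Y)})$. It is built from the normal $1$-type data $(\pi_1(Y),w_1^Y,w_2^Y)$, where $w_i^Y$ denote the unique classes in $H^i(\pi_1(Y);\Z/2)$ pulling back to $w_i(\nu_Y)$ under the $2$-connected classifying map $c\colon Y\to K(\pi_1(Y),1)$.

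\textbf{Step 1 (the hypotheses of \cref{lemma:induced-map-on-1-types}).} \cref{def:relY} gives $\varphi^*(w)=w_1^Y$ and $\varphi^*(v)=w_2^Y$ in the cohomology of $Y$. We need these equalities in the group cohomology of $\pi_1(Y)$. Since $c$ is $2$-connected, $c^*\colon H^i(\pi_1(Y);\Z/2)\to H^i(Y;\Z/2)$ is an isomorphism for $i=1$ and injective for $i=2$. Hence the equalities lift uniquely to the group cohomology, and we may identify the two sides.

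\textbf{Step 2 (construct $\varphi_B$).} With Step~1 in hand, \cref{lemma:induced-map-on-1-types} applies with $\phi=\varphi$, $(G,w,v)$ replaced by $(\pi_1(Y),w_1^Y,w_2^Y)$, and $(H,x,y)$ replaced by $(G,w,v)$. It produces $\varphi_B\colon B_Y\to B_G$ covering $\id_{\BO}\times\varphi$ up to homotopy. Since $\xi_G$ is a fibration, we may homotope $\varphi_B$ so that $\xi_G\circ\varphi_B=\xi_Y$, that is, so that $\varphi_B$ is a map over $\BO$.

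\textbf{Step 3 (effect on $\pi_1$).} By \cref{remark:les-homotopy-groups}, $\pi_1(B_Y)\cong\pi_1(Y)$ and $\pi_1(B_G)\cong G$, and these isomorphisms are induced by the projections to the $K(-,1)$ factors. Since $\varphi_B$ covers $\id\times\varphi$ up to homotopy, it induces $\varphi$ on $\pi_1$.

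\textbf{Step 4 (the $\xi_G$-structure).} Let $\ol{\nu}_Y$ be a normal $1$-smoothing of $Y$. Then $\xi_G\circ\varphi_B\circ\ol{\nu}_Y=\xi_Y\circ\ol{\nu}_Y=\nu_Y$, so $\varphi_B\circ\ol{\nu}_Y$ is a $\xi_G$-structure in the sense of \cref{defn:xi-structure}. The map $\ol{\nu}_Y$ is $2$-connected, so it is an isomorphism on $\pi_1$, and the composite therefore induces $\varphi$ under the identifications above.

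The only delicate point is Step~1: matching the cohomology classes of $Y$ with those of $B\pi_1(Y)$. This is exactly the role of the $2$-connectivity of $c$. The rest is formal.
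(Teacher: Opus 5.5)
Your proposal is correct and matches the paper's proof, which is exactly the one-line application of \cref{lemma:induced-map-on-1-types} with source data $(\pi_1(Y),w_1^Y,w_2^Y)$ and $\phi=\varphi$. Your Step~1 also makes explicit something the paper leaves implicit: the passage from $H^*(Y;\Z/2)$ to $H^*(\pi_1(Y);\Z/2)$, via injectivity of $c^*$ in degrees $\leq 2$.

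One small point in Step~4, which the paper's phrasing shares: if $\ol{\nu}_Y$ induces an automorphism $\alpha$ of $\pi_1(Y)$ relative to the projection $B_Y\to K(\pi_1(Y),1)$, then $\varphi_B\circ\ol{\nu}_Y$ induces $\varphi\circ\alpha$, not $\varphi$. So either identify $\pi_1(B_Y)\cong\pi_1(Y)$ via $\ol{\nu}_Y$ itself, or build $\varphi_B$ from $\varphi\circ\alpha^{-1}$, which is still admissible because $\alpha^*$ fixes $w_1^Y$ and $w_2^Y$.
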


\begin{proof}
    Apply~\cref{lemma:induced-map-on-1-types} with $(G,w,v)$ the normal $1$-type data of $Y$, with $\varphi$ in place of $\phi$.
\end{proof}

 \cref{cor:existence-varphi-B} produces $\xi_G$-structures $\ol{\nu}_{\varphi}$ that might satisfy the hypotheses of \eqref{item-main-characterisation-ii} in the following promised partial characterisation of $3$-manifolds admitting stably exotic fillings. 

\begin{theorem}\label{thm:main-characterisation}
Let $Y$ be a closed, smooth 3-manifold and let $(G,w,v,\varphi)$ be $Y$-filling normal 1-type data.
\begin{enumerate}[(i)]
    \item\label{item-main-characterisation-i} If $Y$ admits stably exotic fillings with normal 1-type data $(G,w,v)$, then there exist closed stably exotic $4$-manifolds with normal 1-type data $(G,w,v)$.
    \item\label{item-main-characterisation-ii}  If $\varphi$ is surjective and $[Y,\ol{\nu}_\varphi] = 0 \in \Omega_3(\xi_G)$ for some $\xi_G$-structure $\ol{\nu}_\varphi \colon Y \to B_G$ inducing $\varphi$ on fundamental groups, and there exist closed stably exotic $4$-manifolds with normal 1-type data $(G,w,v)$, then $Y$ admits stably exotic fillings with normal 1-type data $(G,w,v)$.
\end{enumerate}
\end{theorem}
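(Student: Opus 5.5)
For part \eqref{item-main-characterisation-i}, suppose $M$ and $M'$ are stably exotic fillings of $Y$ with normal 1-type data $(G,w,v)$. The closed 4-manifolds will be the two doubles
\[N := M \cup_Y \ol{M}, \qquad N' := M' \cup_Y \ol{M}.\]
Since $M \to M\cup_Y \ol M$ admits a retraction (fold map), $\pi_1(N)$ surjects onto $G$. I would stabilise first so that $\pi_1(N)\cong G$ with the correct $w_1,w_2$, or else pass to the 1-type of $N$ directly.

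A cleaner route is Kreck's criterion itself. Stably exotic means $M$ and $M'$ are stably homeomorphic rel.\ $Y$ but not stably diffeomorphic rel.\ $Y$. By \cref{thm:kreckstable1type}, for every choice of smoothings agreeing on $Y$, the class $[M\cup_Y M']$ is nonzero in $\Omega_4(\xi_G)$ but zero in $\Omega_4(\xi^{\Top}_G)$ for some choice. This element lies in the kernel of $\Omega_4(\xi_G)\to\Omega_4(\xi^{\Top}_G)$. I would then use the standard fact (Kreck) that any element of this kernel, added to a fixed closed manifold with 1-type $\xi_G$, is realised by a closed manifold whose normal 1-smoothing is 2-connected, via surgery below the middle dimension. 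Concretely, take $N_0 := M\cup_Y \ol M$ with the doubled smoothing and $N_1 := M'\cup_Y \ol M$. Their difference in $\Omega_4(\xi_G)$ is $[M'\cup_Y -M]$ up to sign, which is nonzero smoothly and zero topologically. Doing surgery on 1-spheres and 0-spheres makes the maps to $B_G$ 2-connected without changing bordism classes. Then Kreck's closed theorem (\cref{thm:kreckstable1type} with $Y=\emptyset$) shows that the two results are stably homeomorphic but not stably diffeomorphic. One subtlety is the indeterminacy over choices of smoothings: non-stable-diffeomorphism requires nonvanishing for all smoothing choices on $N_0,N_1$. I would handle this by noting that the automorphisms of $\xi_G$ act compatibly on both classes, together with the stable-homeomorphism classes.

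For part \eqref{item-main-characterisation-ii}, start with closed stably exotic $N,N'$ with data $(G,w,v)$. The nullbordism of $(Y,\ol\nu_\varphi)$ gives a $\xi_G$-manifold $W$ with $\partial W=Y$. Surgery on $W$ rel.\ boundary below the middle dimension makes $\ol\nu_W$ 2-connected. This uses surjectivity of $\varphi$ to kill the kernel on $\pi_1$ by 1-surgeries and fix $\pi_2$ by 2-surgeries, while surjectivity on $\pi_1$ is already given by $\varphi$. So $W$ is a filling with normal 1-type $\xi_G$. Then set $M:=W\# N$ and $M':=W\# N'$, connected sums along the 1-type (boundary connected sum compatibly over $B_G$, then surgering the extra free $\pi_1$ factor to identify it with $G$). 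Computing in bordism, $[M\cup_Y -M'] = [W\cup_Y -W] + [N]-[N']$. The first term vanishes since a double bounds. By Kreck's closed criterion, $[N]-[N']$ is nonzero in $\Omega_4(\xi_G)$ and zero in $\Omega_4(\xi^\Top_G)$ for suitable smoothings, so \cref{thm:kreckstable1type} gives stably exotic fillings.

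The main obstacle is the bookkeeping of normal 1-smoothings. Kreck's criterion quantifies over choices, so I must show that nonvanishing persists for all choices of smoothings agreeing on $Y$, while topological vanishing needs only one. I would handle this by tracking how changing a smoothing by a fibre automorphism of $\xi_G$ changes $[N]$ and $[N']$ simultaneously, and I have not checked this step.
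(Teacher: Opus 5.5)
Your part (i) matches the paper's argument. The double $M\cup_Y\ol M$ represents $0$. The union $M'\cup_Y\ol M$ represents a nonzero element of $\ker\big(\Omega_4(\xi_G)\to\Omega_4(\xi_G^{\Top})\big)$ once you equip it with smooth lifts along $\eta_G$ of the topological smoothings that witness stable homeomorphism. You should state why these lifts exist: the Kirby--Siebenmann obstruction vanishes because $M,M'$ are smooth. Surgery below the middle dimension and Kreck's closed criterion then finish. The indeterminacy is settled by the fact that one class is $0$, not by looking at a ``difference''. Automorphisms of $\xi_G$ act on $\Omega_4(\xi_G)$ by group automorphisms, so they never carry the nonzero class to $0$.

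Part (ii) has a genuine gap, and it is exactly the step you left unchecked. To show your $M$ and $M'$ are not stably diffeomorphic rel.\ $Y$, you must exclude \emph{every} pair of normal $1$-smoothings agreeing on $Y$. Nonvanishing for your particular pair says nothing on its own. A putative stable diffeomorphism could be witnessed by smoothings that restrict differently to $Y$, or differ from yours by different automorphisms on the two sides, or are not homotopic to yours rel.\ $Y$. Tracking automorphisms ``simultaneously'' presupposes the missing input, which is the surjectivity of $\varphi$. You spend that hypothesis on the surgery step, where it is inessential (and no $2$-surgeries are needed there, since $\pi_2(B_G)=0$).

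The paper argues directly. Suppose $f\colon M\#\ell(S^2\times S^2)\to M'\#\ell(S^2\times S^2)$ is a diffeomorphism rel.\ $Y$. Then $\ol\nu_{M'\#\ell(S^2\times S^2)}\circ f$ and $\ol\nu_{M\#\ell(S^2\times S^2)}$ are lifts of the same normal bundle along $\xi_G$, and both restrict to $\ol\nu_\varphi$ on $Y$. The obstructions to a homotopy rel.\ $Y$ between them lie in $H^i(M\#\ell(S^2\times S^2),Y;\pi_i(F))$, where $F$ is the fibre of $\xi_G$. These vanish for $i\geq 2$ because $\xi_G$ is $2$-coconnected. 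They vanish for $i\leq 1$ because surjectivity of $\varphi$ makes $Y\to M\#\ell(S^2\times S^2)$ $1$-connected. So the union with your chosen smoothings is bordant to a double and vanishes. In the paper this gives $[K_3]=0$, contradicting the Kasprowski--Powell criterion. In your set-up it gives $[N,\ol\nu_N]=[N',\ol\nu_{N'}]$, contradicting Kreck's closed criterion for the exotic pair $N,N'$.

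With this step inserted, your route is a valid alternative. Gluing in the given closed pair, rather than $K_3$ versus $11(S^2\times S^2)$, avoids \cref{lemma:homeomorphic-construction}, the $w\neq 0$ case distinction, and the appeal to Kasprowski--Powell. The cost is an extra surgery identifying $\pi_1(W\# N)=G*G$ with $G$. Your fillings are also only stably homeomorphic, not homeomorphic, rel.\ boundary.
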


We will use \cref{thm:main-characterisation} to deduce the remaining items of \cref{thm:some-stably-exotic-fillings-exist} in \cref{section:circle-bundles}, and to prove \cref{thm:some-stably-exotic-fillings-dont-exist} in \cref{section:non-existence}.

\begin{proof}
First we prove~\eqref{item-main-characterisation-i}. 
Assume that $Y$ admits stably exotic fillings $M$ and $M'$ with normal 1-type data $(G,w,v)$. 
Since $M$ and $M'$ are stably homeomorphic rel.\ $Y$, by \cref{thm:kreckstable1type}~\eqref{item:kreck-thm-i} there exist normal $1$-smoothings $\ol{\nu}^{\top}_M\colon M\to B^{\top}_G$ and $\ol{\nu}^{\top}_{M'}\colon M'\to B^{\top}_G$, such that  $\ol{\nu}^{\top}_M|_Y = \ol{\nu}^{\top}_{M'}|_Y$, and such that 
\[
[M\cup_Y M', \ol{\nu}^{\top}_M \cup - \ol{\nu}^{\top}_{M'}] = 0\in \Omega_4(\xi^{\top}_G).
\] 
Recall that the spaces $B_G$ and $B_G^{\top}$ were constructed as certain homotopy fibres in \cref{subsec:normal-1type-date}, differing only in that we used $\BO$ or $\BTop$ in the construction respectively. As such, the obstruction to lifting topological normal $1$-smoothings along the forgetful map $\eta_G\colon B_G\to B_G^{\top}$ is the obstruction to lifting the stable topological normal bundle along the forgetful map $\BO\to\BTop$, which is, by definition, the Kirby--Siebenmann invariant of the manifold. Since both $M$ and~$M'$ are smooth, this obstruction vanishes for each of~$\ol{\nu}^{\top}_M$ and $\ol{\nu}^{\top}_{M'}$, and so the lift along $\eta_G$ exists.
Let~$\ol{\nu}_M\colon M\to B_G$ and $\ol{\nu}_{M'}\colon M'\to B_G$ be the resulting normal 1-smoothings  such that~$\ol{\nu}^{\top}_M=\eta_G\circ \ol{\nu}_M$ and~$\ol{\nu}^{\top}_{M'}=\eta_G\circ \ol{\nu}_{M'}$.

Since $M$ and $M'$ are not stably diffeomorphic rel.\ $Y$,~\cref{thm:kreckstable1type}~\eqref{item:kreck-thm-ii} implies that $[M\cup_Y M', \ol{\nu}_M \cup - \ol{\nu}_{M'}]\neq 0\in\Omega_4(\xi_G)$. Hence there exists a nontrivial element in the kernel of the forgetful map $\Omega_4(\eta_G)\colon \Omega_4(\xi_G)\to\Omega_4(\xi^{\top}_G)$. 
Each element of~$\Omega_4(\xi_G)$ can be represented by a 4-manifold with normal 1-type data $(G,w,v)$, by surgery below the middle dimension.  Apply this to surger the trivial element $M \cup_Y M$ and  the nontrivial element $M \cup_Y M'$ of $\Omega_4(\xi_G)$ we just constructed to obtain 4-manifolds  with normal 1-type data $(G,w,v)$.
Automorphisms of the normal 1-type cannot relate a nontrivial element of $\Omega_4(\xi_G)$ with the trivial element. Hence by~\cite{surgeryandduality}*{Theorem~C}, there exist closed stably exotic $4$-manifolds with normal 1-type data~$(G,w,v)$. 
This completes the proof of \eqref{item-main-characterisation-i}.

Next we prove \eqref{item-main-characterisation-ii}. If $w=0$, then closed stably exotic $4$-manifolds with normal 1-type data~$(G,w,v)$ do not exist by \cite{Gompf84}. Hence we assume $w\neq 0$. By our hypothesis that~$[Y,\ol{\nu}_\varphi] = 0 \in \Omega_3(\xi_G)$ for some $\xi_G$-structure $\ol{\nu}_\varphi$ of $Y$, there exists a filling $X$ of $Y$ with a smoothing~$\ol{\nu}_X \colon X \to B_G$ lifting the stable normal bundle along $\xi_G$ and extending $\ol{\nu}_\varphi$ on~$Y$. 
Using surgery, we can assume that $\ol{\nu}_X$ is a normal 1-smoothing, i.e.\ is 2-connected (recall from \cref{remark:les-homotopy-groups} that $\pi_2(B_G)=0$).   It follows from the definition of $\xi_G$ and the assumption that $w \neq 0$ that $w_1(\nu X) \neq 0$, and hence $X$ is nonorientable.  Define
\[M:= X \# K_3 \text{ and } M' := X \# 11(S^2 \times S^2).\]
Then $M$ and $M'$ are homeomorphic by \cref{lemma:homeomorphic-construction}.

Extend $\ol{\nu}_X$ to normal 1-smoothings $\ol{\nu}_M$ and $\ol{\nu}_{M'}$ of $M$ and $M'$  respectively. For this, use the unique spin structures on $K_3$ and $11(S^2 \times S^2)$ together with \cref{rem:bspin-maps-to-BsubG} to obtain $\xi_G$-structures on these manifolds, and take the connected sum.  Again both normal 1-smoothings agree with $\ol{\nu}_\varphi$ on~$Y$. For those smoothings, \[[M \cup_Y M', \ol{\nu}_M \cup - \ol{\nu}_{M'}]=[K_3] \in \Omega_4(\xi_G),\] 
because the double $[X \cup_Y X, \ol{\nu}_X \cup - \ol{\nu}_X]$ is null-bordant, as is $\# 11(S^2 \times S^2)$. 

Now we show that $M$ and $M'$, which both have normal 1-type data $(G,w,v)$, are the desired stably exotic fillings. 
Suppose then, for a contradiction, that there exists a stable diffeomorphism from $M$ to $M'$ relative to $Y$, i.e.\ a diffeomorphism $f \colon M \# \ell(S^2 \times S^2) \to M' \# \ell(S^2 \times S^2)$ for some~$\ell \in \N_0$. We use this to prove that there are no closed, stably exotic 4-manifolds with normal 1-type data $(G,w,v)$, which contradicts the hypotheses of \eqref{item-main-characterisation-ii}, and thus completes the proof. 

By \cite{Kasprowski-Powell}*{Proposition~4.1~(ii)}, 
if $[K_3]=0 \in \Omega_4(\xi_G)$, then there are no closed, stably exotic 4-manifolds with normal 1-type data $(G,w,v)$. So we aim to prove the former statement.

We claim that $\ol{\nu}_{M' \# \ell(S^2 \times S^2)} \circ f$  and $\ol{\nu}_{M \# \ell(S^2 \times S^2)}$ are homotopic rel.\ $Y$.  For this we use that both restrict to $\ol{\nu}_\varphi$ on the boundary and that $\varphi \colon \pi_1(Y) \to G$ is surjective by assumption, which implies that the inclusion  $Y \to M \# \ell(S^2 \times S^2)$ is 1-connected. 
Consider the diagram
\[
\begin{tikzcd}[column sep = 3cm]
Y \ar[r,"\ol{\nu}_\varphi"] \ar[d] & B_G \ar[d,"\xi_G"]  \\ M \# \ell(S^2 \times S^2) \ar[r,"\nu_{M\# \ell(S^2 \times S^2)}"'] \ar[ur,dashed] & \BO.
\end{tikzcd}
\]
The maps $\ol{\nu}_{M \# \ell(S^2 \times S^2)}$ and $\ol{\nu}_{M' \# \ell(S^2 \times S^2)} \circ f$ both give examples of the diagonal dashed arrow that make the diagram commute. 
Let $F$ be the homotopy fibre of $\xi_G$. 
The obstructions to finding a homotopy rel.\ $Y$ between two given lifts of $\nu_{M \# \ell(S^2 \times S^2)}$ along $\xi_G$  lie in \[H^i(M\# \ell (S^2 \times S^2),Y; \pi_i(F)).\]  Since $\xi_G$ is 2-coconnected we have that $\pi_i(F)=0$ for $i \geq 2$. Since $Y \to  M \# \ell(S^2 \times S^2)$ is 1-connected,  the cohomology groups $H^1(M\# \ell (S^2 \times S^2),Y; \pi_1(F))$ and $H^0(M\# \ell (S^2 \times S^2),Y; \pi_0(F))$ both vanish. Hence any two lifts, in particular $\ol{\nu}_{M \# \ell(S^2 \times S^2)}$ and $\ol{\nu}_{M' \# \ell(S^2 \times S^2)} \circ f$, are homotopic rel.\ $Y$.

It follows that 
\[0 = [M \# \ell(S^2 \times S^2) \cup_Y M' \# \ell(S^2 \times S^2), \ol{\nu}_{M\# \ell(S^2 \times S^2)} \cup - \ol{\nu}_{M' \# \ell(S^2 \times S^2)}]=[K_3] \in \Omega_4(\xi_G),\] as desired.  Hence, by \cite{Kasprowski-Powell}*{Proposition~4.1},  there exist no closed stably exotic $4$-manifolds with normal 1-type data $(G,w,v)$. 
\end{proof}

\section{Some nonorientable 3-manifolds that admit stably exotic fillings}

In this section we prove \cref{thm:some-stably-exotic-fillings-exist}~\eqref{main-thm-item-ii} and~\eqref{main-thm-item-iii}, using \cref{thm:main-characterisation}~\eqref{item-main-characterisation-ii}. Recall that the latter result has two main hypotheses. One of these is the existence of closed stably exotic 4-manifolds. To arrange for this, we will leverage the following statements from \cite{Kreck84} and \cite{Kasprowski-Powell}. Recall that~$H^i(\Z/2;\Z/2) = \Z/2$ for $i=1,2$. 

\begin{theorem}[\cite{Kreck84}]\label{thm:kreck-Z-2-1-1-has-closed-stable-exotica}
There exist closed stably exotic $4$-manifolds with normal 1-type data $(\Z/2,1,1)$.    
\end{theorem}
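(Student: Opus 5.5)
Following Kreck's original construction, I will exhibit a closed smooth nonorientable $4$-manifold $X$ with normal $1$-type data $(\Z/2,1,1)$. The pair will then be
\[M := X \# K_3 \quad\text{and}\quad M' := X \# 11(S^2\times S^2).\]
The natural choice is $X=\RP^4$. Here $\pi_1=\Z/2$ and the universal cover $S^4$ is spin. Since $w(\RP^4)=(1+a)^5$, we get $w_1=a$ and $w_2=\binom52a^2=0$. For the stable normal bundle, $\bar w=(1+a)^{-5}=(1+a)^3$ in degrees at most $2$, so $\bar w_1=a$ and $\bar w_2=3a^2=a^2\neq 0$. Thus $\RP^4$ has normal $1$-type data $(\Z/2,1,1)$. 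If the conventions turn out to require the tangential $w_2$, I would instead look for a different $X$, for instance $\RP^4\#\CP^2$ is excluded since it is not spin-universal, so I would check the convention carefully at this point. By \cref{lemma:homeomorphic-construction} (closed case, $Y=\emptyset$), $M$ and $M'$ are homeomorphic. Both clearly have the same normal $1$-type data as $X$, since connected sum with simply connected spin manifolds changes neither $\pi_1$ nor the classes $w_i$.

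To show that $M$ and $M'$ are not stably diffeomorphic, I will use \cite{Kasprowski-Powell}*{Proposition~4.1} in its converse direction, or argue directly via Kreck's theorem \cite{surgeryandduality}*{Theorem~C}. The manifolds are stably diffeomorphic if and only if, for some choice of normal $1$-smoothings, possibly twisted by an automorphism of the normal $1$-type, their classes in $\Omega_4(\xi_{\Z/2})$ agree. With the smoothings extended from one on $X$, as in the proof of \cref{thm:main-characterisation}, the difference of the classes is $[K_3]$. So the key step is to show that
\[[K_3]\neq 0 \in \Omega_4(\xi_{\Z/2}).\]
One also has to check that no automorphism of the normal $1$-type, such as a change of lift, carries $[M]$ to $[M']$. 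Since the difference is the image of $\Omega_4^{\Spin}$, and automorphisms act compatibly on that image, this reduces to the same nonvanishing statement.

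This bordism group is Pin-type: for $(\Z/2,1,1)$ one has $\Omega_4(\xi_{\Z/2})\cong\Omega_4^{\pinplus}\cong\Z/16$, generated by $\RP^4$. The map $\Omega_4^{\Spin}\to\Omega_4^{\pinplus}$ sends $K_3$ to $8$ times the generator, which is nonzero. This identification is the main obstacle. To prove it, I would run the James spectral sequence $H_p(\Z/2;(\Omega_q^{\Spin})^w)$ and show that the class in $E^2_{0,4}=H_0(\Z/2;\Z^-)=\Z/2$ coming from $K_3$ survives, i.e.\ is not hit by the differentials $d_2$ from $E_{2,3}=0$, $d_3$ from $E^3_{3,2}$, $d_4$ from $E^4_{4,1}$, or $d_5$ from $E_{5,0}$. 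The cleanest route is to cite Kirby--Taylor's computation $\Omega_4^{\pinplus}\cong\Z/16$ with the spin image $2\Z/16\ldots$, detected by the $\eta$-invariant; equivalently, I would cite Kreck's original argument in \cite{Kreck84}.
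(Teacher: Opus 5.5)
Your construction ($X=\RP^4$) is exactly Kreck's example, which the paper cites without proof, and your check that $\RP^4$ has normal $1$-type data $(\Z/2,1,1)$ is correct. The paper uses the normal convention ($w_i^W$ pulls back to $w_i(\nu_W)$), so no hedging is needed. The identification $\Omega_4(\xi_{\Z/2})\cong\Omega_4^{\pinplus}\cong\Z/16$ with $[K_3]\mapsto 8$ is also right. Note that the image of $\Omega_4^{\Spin}$ is therefore $\{0,8\}$, not $2\Z/16$ as your last paragraph says.

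\textbf{The gap} is the sentence dismissing the choice of normal $1$-smoothings. To rule out stable diffeomorphism you must show that \emph{no} normal $1$-smoothing of $M$ is bordant to \emph{any} normal $1$-smoothing of $M'$. The classes realised by $M$ form the orbit of $[M]$ under automorphisms $\alpha$ of $\xi_{\Z/2}$. Since $[M]=[M']+[K_3]$, what you need is $[K_3]\neq(1-\alpha_*)[M']$ for every $\alpha$. Knowing that $\alpha_*$ fixes $[K_3]$ does not give this, and the claimed reduction to $[K_3]\neq 0$ is false in general.

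Here is why. For a closed $X$ with normal $1$-type data $(\Z/2,1,1)$, the normal $1$-smoothings are its two $\pinplus$ structures (a torsor over $H^1(X;\Z/2)=\Z/2$). The boundary of $X\times I$ shows that they represent $x$ and $-x$ whenever $x\neq -x$. So for the nontrivial automorphism, $(1-\alpha_*)x=2x$, and this equals $8$ when $x\equiv 4 \pmod 8$. Concretely, take $X_4$ with this normal $1$-type data representing $4\in\Z/16$; it exists by surgery below the middle dimension. Then $X_4\#K_3$ and $X_4\#11(S^2\times S^2)$ both realise exactly $\{4,12\}$ and have equal Euler characteristic. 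By Kreck's Theorem~C they are stably diffeomorphic, even though $[K_3]\neq 0$.

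What rescues $\RP^4$ is a specific input you must state and use: $[\RP^4]$ generates $\Z/16$, so its two $\pinplus$ structures represent $\pm 1$. Then $\RP^4\#K_3$ realises $\{7,9\}$ and $\RP^4\#11(S^2\times S^2)$ realises $\{1,15\}$. These sets are disjoint, so the easy direction of Kreck's theorem (pull back a smoothing along a stable diffeomorphism) gives the contradiction.

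Alternatively, use the device from the proof of \cref{thm:main-characterisation}\,(i). Replace $\RP^4$ by some $X_0$ with normal $1$-type data $(\Z/2,1,1)$ that is null-bordant over $\xi_{\Z/2}$, again obtained by surgery. Then $X_0\#11(S^2\times S^2)$ represents $0$ and $X_0\#K_3$ represents $8$ for every choice of smoothing, because automorphisms act by group automorphisms. In that setting $[K_3]\neq 0$ genuinely suffices.
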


For example, $\RP^4 \# K_3$ and $\RP^4 \# 11(S^2 \times S^2)$ are stably exotic. 

\begin{theorem}[\cite{Kasprowski-Powell}*{Theorem~A}]
\label{thm:stably-exotic-dim3}
    Let $(G,w,v)$ be normal 1-type data. If there exist closed stably exotic $4$-manifolds with normal 1-type data $(G,w,v)$, then $w\neq 0$ and $w^3=wv\in H^3(G;\Z/2)$. 
    
    If moreover $G$ has cohomological dimension at most $3$, the converse also holds, i.e.\ if $w \neq 0$ and $w^3 =wv \in H^3(G;\Z/2)$ then there exist closed stably exotic $4$-manifolds with normal 1-type data~$(G,w,v)$.
\end{theorem}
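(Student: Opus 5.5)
By \cite{Kasprowski-Powell}*{Proposition~4.1} and the proof of \cref{thm:main-characterisation}, when $w\neq 0$, closed stably exotic $4$-manifolds with normal 1-type data $(G,w,v)$ exist if and only if $[K_3]\neq 0\in\Omega_4(\xi_G)$. When $w=0$ they do not exist, by \cite{Gompf84}. So the plan is to fix $w\neq 0$ and decide when $[K_3]$ survives in the James spectral sequence $E^2_{p,q}=H_p(G;(\Omega^{\Spin}_q)^w)\Rightarrow\Omega_*(\xi_G)$.

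\textbf{Step 1: locate $[K_3]$ and list the differentials that could kill it.} As in the proof of \cref{thm:orientable-case}, $E^2_{0,4}=H_0(G;\Z^w)\cong\Z/2$ because $w\neq 0$. Under $16\Z\cong\Omega_4^{\Spin}\to E^2_{0,4}$ the class $16$ maps to $1$, so $[K_3]$ is represented by the generator of $E^2_{0,4}$. Since $E_{0,4}$ sits on the edge, $[K_3]\neq0$ exactly when this generator is not hit by a differential. The candidates are:
\begin{itemize}
\item $d_2$ from $E_{2,3}$, which is zero since $\Omega_3^{\Spin}=0$;
\item $d_3\colon E^3_{3,2}\to E^3_{0,4}$, where $E^2_{3,2}=H_3(G;\Z/2)$;
\item $d_4$ from $E_{4,1}$;
\item $d_5$ from $E_{5,0}$.
\end{itemize}
The incoming $d_2\colon E^2_{3,2}\to E^2_{1,3}$ vanishes, so $E^3_{3,2}$ is a quotient of $H_3(G;\Z/2)$. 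Hence $d_3$ is a homomorphism $H_3(G;\Z/2)\to\Z/2$, that is, a class $\delta(G,w,v)\in H^3(G;\Z/2)$ evaluated on homology.

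\textbf{Step 2 (main obstacle): identify $\delta=w^3+wv$.} I would argue by naturality. The construction of $\xi_G$ is pulled back along $(w,v)\colon K(G,1)\to K(\Z/2,1)\times K(\Z/2,2)$, and the James spectral sequence is natural under the maps of \cref{lemma:induced-map-on-1-types}. So $\delta$ should be natural in $(G,w,v)$, and it is pulled back from a universal class in the span of $w^3$, $wv$ and $\Sq^1 v$. (The universal space is not a $K(G,1)$, so one may have to formulate this at the level of Thom spectra, or use the twisted-$\mathrm{ko}$ description of the $d_2$ and $d_3$ differentials.)

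The three coefficients are then fixed by test cases, each with a known answer:
\begin{itemize}
\item $(\Z/2,1,1)$, where exotica exist by \cref{thm:kreck-Z-2-1-1-has-closed-stable-exotica}, so $\delta=0$. Since $w^3=wv\neq0$ here, this forces the coefficients of $w^3$ and $wv$ to be equal.
\item $(\Z/2,1,0)$, with $\Omega_4(\xi)\cong\Omega_4^{\pinplus}\cong\Z/16$ generated by $\RP^4$. Here $[K_3]=8[\RP^4]\neq0$, whereas $w^3\neq0$.
\item A case such as $\Z/2\times\Z/2$ or $\Z/4$, with $w$ pulled back and $v$ chosen so that $\Sq^1 v\neq 0$, to separate the $\Sq^1 v$ term.
\end{itemize}
If the naturality argument stalls, the fallback is a direct geometric description of $d_3$. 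Represent $x\in H_3(G;\Z/2)$ by a $\xi_G$-manifold built from a $3$-cycle with a spin structure on its $2$-skeleton (that is, realise elements of $E_{3,2}$ via the nonbounding spin torus). Then compute the obstruction to extending this to a null-bordism, which should be $\langle w^3+wv,x\rangle$ times $[K_3]$.

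\textbf{Step 3: conclude.} If $w^3\neq wv$, choose $x$ with $\langle w^3+wv,x\rangle=1$. Then $d_3(x)$ kills $E_{0,4}$, so $[K_3]=0$ and no exotica exist; this proves the first statement. Conversely, if $G$ has cohomological dimension at most $3$, then $E_{4,1}=E_{5,0}=0$, so only $d_3$ can hit $E_{0,4}$. When $w^3=wv$ this $d_3$ vanishes, so $[K_3]\neq0$. Surgery below the middle dimension then realises $[K_3]$ and $0$ by closed $4$-manifolds with normal 1-type data $(G,w,v)$, and \cite{surgeryandduality}*{Theorem~C} gives the stably exotic pair, exactly as in the proof of \cref{thm:main-characterisation}~\eqref{item-main-characterisation-i}.
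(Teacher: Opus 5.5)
The paper does not prove this statement; it is quoted from Kasprowski--Powell, so your argument has to stand on its own.

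\textbf{What works.} Steps~1 and~3 are a sound reduction. For $w\neq0$, exotica exist iff the generator of $E_{0,4}\cong\Z/2$ survives. The only possible killers are $d_3$ from $E^3_{3,2}$, $d_4$ from $E_{4,1}$ and $d_5$ from $E_{5,0}$. Here $E^3_{3,2}$ is $H_3(G;\Z/2)$ modulo the image of $d_2$ from $E_{5,1}$; the $d_2\colon E_{3,2}\to E_{1,3}$ you call incoming is the outgoing one. Your first test case, $(\Z/2,1,1)$, is also fine.

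\textbf{The gap: Step~2, second test case.} Step~2 carries all the content of the theorem, and the second test case is misidentified. Normal 1-type data record \emph{normal} Stiefel--Whitney classes. Since $w(\nu_{\RP^4})=(1+x)^{-5}=1+x+x^2+x^3$, the manifold $\RP^4$ has data $(\Z/2,1,1)$, as the paper indicates when it offers $\RP^4\#K_3$ as an instance of Kreck's theorem. So the $\Z/16$ you quote is the bordism group of your \emph{first} test case. The data $(\Z/2,1,0)$ is normal $\pinplus$, equivalently tangential $\pinminus$, and there $\Omega_4(\xi)\cong\Omega_4^{\pinminus}=0$. Taken at face value, your claim $[K_3]\neq0$ for $(\Z/2,1,0)$ would make $d_3$ vanish there. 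That would force the coefficient of $w^3$ to be zero, and then, by the first case, the coefficient of $wv$ too. This is the opposite of the formula you want. It would even be a counterexample to the theorem, since $w^3=x^3\neq0=wv$ for this data.

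\textbf{Repairing that test case.} With the correct value $\Omega_4=0$ the case is usable, but you must first check that $E_{0,4}$ is killed by $d_3$ and not by $d_4$ from $E_{4,1}=H_4(\Z/2;\Z/2)\neq0$. Recall $\Z/2$ has infinite cohomological dimension, so $E_{4,1}$ does not vanish automatically. The check goes through: $d_2\colon E_{4,1}\to E_{2,2}$ is dual to $x^2\mapsto \Sq^2(x^2)+w\Sq^1(x^2)+vx^2=x^4$, hence injective, so $E^3_{4,1}=0$. Also $E_{5,0}=H_5(\Z/2;\Z^-)=0$.

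\textbf{The $\Sq^1 v$ coefficient is never determined.} Let $c$ be the coefficient of $\Sq^1 v$ in the universal class. The group $\Z/4$ cannot detect it, because $\Sq^1$ vanishes on $H^2(\Z/4;\Z/2)$. For $\Z/2\times\Z/2$ you supply no known answer. Within your naturality framework there is an easy fix using orientable data $(G,0,v)$:
\begin{enumerate}[(a)]
\item There $E_{0,4}=H_0(G;\Z)\cong\Z$ is torsion-free, so $d_3$ out of the $2$-torsion group $E^3_{3,2}$ vanishes.
\item The induced map on $E_{0,4}$ is the surjection $\Z\to\Z/2$, so naturality gives $c\,\Sq^1 v=0\in H^3(G;\Z/2)$.
\item Taking $G=\Z/2\times\Z/2$ and $v=xy$ gives $\Sq^1 v=x^2y+xy^2\neq0$, hence $c=0$.
\end{enumerate}

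The geometric ``fallback'' is only a heuristic and does not replace this computation. The skeleton can be completed, but as written Step~2 would compute the wrong differential.
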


In \cref{section:circle-bundles} we prove \cref{thm:some-stably-exotic-fillings-exist}~\eqref{main-thm-item-ii}. We use \cref{thm:stably-exotic-dim3} to show the existence of closed stably exotic 4-manifolds. The key observation is that circle bundles over aspherical surfaces provide surjective maps $\varphi\colon \pi_1(Y)\to G$, for groups $G$ with cohomological dimension two, that are compatible with the normal $1$-type data in the sense of~\cref{def:relY}.

In \cref{section:pin-bordism}, we prove \cref{thm:some-stably-exotic-fillings-exist}~\eqref{main-thm-item-iii} for nonorientable $Y$, the case of orientable $Y$ having been already dealt with in \cref{section:orientable-case}.
In this case, we apply \cref{thm:kreck-Z-2-1-1-has-closed-stable-exotica} to provide the closed stably exotic 4-manifolds required for \cref{thm:main-characterisation}~\eqref{item-main-characterisation-ii}. To satisfy the other hypothesis of the latter theorem, we use the normal $1$-type data $(\Z/2,1,1)$, and connect $\Omega_3(\xi_{\Z/2})$ to $\pinplus$-bordism. 

\subsection{Circle bundles admit stably exotic fillings}\label{section:circle-bundles}

The next result proves \cref{thm:some-stably-exotic-fillings-exist}~\eqref{main-thm-item-ii}.

\begin{theorem}\label{theorem:circle-bundle-case}
    Let $Y$ be a circle bundle over a surface $\Sigma \neq \RP^2$. Then $Y$ admits stably exotic fillings. 
\end{theorem}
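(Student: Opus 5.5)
If $Y$ is orientable, \cref{thm:orientable-case} already gives stably exotic fillings. So I would assume $Y$ is nonorientable. The plan is to apply \cref{thm:main-characterisation}~\eqref{item-main-characterisation-ii} with $G=\pi_1(\Sigma)$ and $\varphi=p_*$, where $p\colon Y\to\Sigma$ is the bundle projection. The closed exotica would come from \cref{thm:stably-exotic-dim3}, and the null-bordism of $Y$ from the associated disc bundle.

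\textbf{Reduction to aspherical $\Sigma$.} If $\Sigma=S^2$, the bundle has structure group reducing to $SO(2)$ because $S^2$ is simply connected. Then $Y$ is orientable, which is already handled. Since $\Sigma\neq\RP^2$, in the remaining cases $\Sigma$ is aspherical. Hence $K(G,1)\simeq\Sigma$, and $G$ has cohomological dimension $2$, so $H^3(G;\Z/2)=0$. The map $\varphi=p_*$ is surjective because the fibre $S^1$ is connected.

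\textbf{Choosing the normal $1$-type data.} Write $Y$ as the unit circle bundle of a rank-$2$ vector bundle $V\to\Sigma$. Let $D$ be the associated $D^2$-bundle, with projection $\pi\colon D\to\Sigma$, so $\partial D=Y$. Then $TD\cong\pi^*(T\Sigma\oplus V)$. Define
\[w:=w_1(\nu_\Sigma\ominus V),\qquad v:=w_2(\nu_\Sigma\ominus V)\in H^*(\Sigma;\Z/2)=H^*(G;\Z/2),\]
that is, the Stiefel--Whitney classes of the stable inverse of $T\Sigma\oplus V$.

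I then need to check that these pull back to $w_1^Y$ and $w_2^Y$. The stable tangent bundle of $Y$ is the restriction of $TD$, namely $p^*(T\Sigma\oplus V)$; this uses the outward normal, which is trivial. So $p^*w=w_1^Y$ and $p^*v=w_2^Y$, and $(G,w,v,\varphi)$ is $Y$-filling normal $1$-type data.

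Since $Y$ is nonorientable, $w_1^Y\neq0$, and therefore $w\neq 0$. The condition $w^3=wv$ holds trivially in $H^3(G;\Z/2)=0$. \cref{thm:stably-exotic-dim3} then gives closed stably exotic $4$-manifolds with normal $1$-type data $(G,w,v)$.

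\textbf{Null-bordism of $Y$.} I would build a $\xi_G$-structure on $D$ as follows. The map $\pi\colon D\to\Sigma=K(G,1)$ and the stable normal bundle $\nu_D=\pi^*(\nu_\Sigma\ominus V)$ agree after composing to $K(\Z/2,1)\times K(\Z/2,2)$, by construction of $(w,v)$. Since $B_G$ is a homotopy fibre, this yields a lift $\ol{\nu}_D\colon D\to B_G$. Its restriction $\ol{\nu}_\varphi:=\ol{\nu}_D|_Y$ is a $\xi_G$-structure on $Y$ inducing $\pi_*\circ i_*=p_*=\varphi$ on fundamental groups. Hence $[Y,\ol{\nu}_\varphi]=0\in\Omega_3(\xi_G)$, and \cref{thm:main-characterisation}~\eqref{item-main-characterisation-ii} applies.

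The main point requiring care is the identification of $TY$ with $p^*(T\Sigma\oplus V)$ stably, and making the lift to $B_G$ compatible on $Y$. Using $D$ directly sidesteps the subtlety that $w_2(V)$ pulls back to zero on $Y$ by the Gysin sequence.
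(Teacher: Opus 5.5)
Your proposal is correct and follows essentially the same route as the paper. Both take the associated disc bundle $D$ (the paper's $X$) as the $\xi_G$-null-bordism, use $(\pi_1(\Sigma), w_1, w_2, p_*)$ as the $Y$-filling normal 1-type data, get closed exotica from \cref{thm:stably-exotic-dim3} because $H^3(G;\Z/2)=0$, and conclude with \cref{thm:main-characterisation}~\eqref{item-main-characterisation-ii}. Your explicit reduction to nonorientable $Y$, rather than only excluding $\Sigma = S^2$, is a welcome precision: it is exactly what guarantees the hypothesis $w\neq 0$ of \cref{thm:stably-exotic-dim3}, which the paper's proof uses without comment.
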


\begin{proof}
If $\Sigma \cong S^2$ then $Y$ is orientable, and this case was covered in \cref{thm:orientable-case}.  
Hence we assume for this proof that $\Sigma$ is aspherical.

    Let $p \colon X \to \Sigma$ be the total space of  the disc bundle over $\Sigma$ obtained from $Y$ by filling in each fibre with $D^2$. This is possible since $\Diff(S^1) \simeq \OO(2)$. Then the universal cover $\wt X \simeq \wt{\Sigma}$ is contractible, and in particular is spin. 
    Furthermore, $\pi_1(X)\cong \pi_1(\Sigma)$, and hence $\pi_1(X)$ is cohomologically $2$-dimensional. 
    Let $i\colon Y\to X$ be the inclusion map and consider the $Y$-filling normal 1-type data given by $(G,w,v,\varphi) := (\pi_1(X),w_1^X,w_2^X,\pi_1(i))$.  
    Note that $w^3=wv\in H^3(G;\Z/2)$ automatically since $H^3(G;\Z/2)=0$. Hence by \cref{thm:stably-exotic-dim3} there exist closed stably exotic $4$-manifolds with normal 1-type data $(G,w,v)$. This gives one of the hypotheses needed to apply \cref{thm:main-characterisation}~\eqref{item-main-characterisation-ii}.

The map $\varphi \colon \pi_1(Y) \to G=\pi_1(X)$ is surjective by construction. The $4$-manifold $X$  shows that~$Y$ bounds over the normal 1-type. Thus all the hypotheses \cref{thm:main-characterisation}~\eqref{item-main-characterisation-ii} are satisfied, and so~$Y$ admits stably exotic fillings. 
\end{proof}

\subsection{Pin bordism and stably exotic fillings}\label{section:pin-bordism} 
We will soon prove \cref{thm:some-stably-exotic-fillings-exist}~\eqref{main-thm-item-iii}. To do so, we will require the following facts about Pin structures and Pin bordism groups from Kirby--Taylor~\cite{kirby-taylor:pin}. 
Let $Y$ be a manifold with stable tangent bundle~$TY$ and stable normal bundle~$\nu Y$. 

\begin{proposition}[{\cite{kirby-taylor:pin}*{Lemma~1.7}}]Let $Y$ be a manifold. Let $\zeta$ denote a $($stable$)$ vector bundle over $Y$ with determinant bundle $\det(\zeta)$. 

\begin{enumerate}[(i)]
\item A $\pinplus$ structure on $\zeta$ is equivalent to a spin structure on $\zeta \oplus 3\det(\zeta)$. Such a structure exists if and only if $w_2(\zeta)=0$.  

\item A $\pinminus$ structure on $\zeta$ is equivalent to a spin structure on $\zeta \oplus \det(\zeta)$.  Such a structure exists if and only if $w_2(\zeta) + w_1(\zeta)^2=0$. 
\end{enumerate}
\end{proposition}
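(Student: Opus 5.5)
The plan is to reduce both parts to statements about spin structures on stable vector bundles, using the standard description of $\pinplus$ and $\pinminus$ in terms of central extensions of $\OO(n)$. I will take as the definition that a $\mathrm{Pin}^{\pm}$ structure on a stable bundle $\zeta$ is a lift of the classifying map $Y\to\BO$ to $B\mathrm{Pin}^{\pm}$. Here $B\mathrm{Pin}^{\pm}\to\BO$ is the homotopy fibre of the class $w_2$ (for $\pinplus$) or $w_2+w_1^2$ (for $\pinminus$) in $H^2(\BO;\Z/2)$.

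The first step is to identify these extension classes. I will compute the cohomology class of the central extension $\Z/2\to\mathrm{Pin}^\pm(n)\to\OO(n)$. This amounts to checking whether a lift of a reflection squares to $+1$ or $-1$, together with the relation for lifts of commuting reflections. The outcome is that $\mathrm{Pin}^+$ is classified by $w_2$ and $\mathrm{Pin}^-$ by $w_2+w_1^2$. This gives the existence criteria in (i) and (ii) directly: a lift exists if and only if the obstruction class pulls back to zero on $Y$.

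The second step is to produce the equivalences with spin structures. For (i), I consider the map $\BO\to\BO$ given by $\zeta\mapsto\zeta\oplus 3\det(\zeta)$. The Whitney sum formula gives
\[ w_1(\zeta\oplus 3\det\zeta)=w_1+3w_1=0 \]
and
\[ w_2(\zeta\oplus 3\det\zeta)=w_2+3w_1^2+3w_1^2=w_2 \pmod 2. \]
The bundle $\zeta\oplus 3\det\zeta$ is therefore orientable with $w_2$ equal to $w_2(\zeta)$. A spin structure on it is then a lift across $w_1$ and $w_2$, which corresponds to a null-homotopy of $w_2(\zeta)$. This is exactly a lift of $\zeta$ through the homotopy fibre of $w_2$, that is, a $\pinplus$ structure. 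For (ii) the same computation gives
\[ w_1(\zeta\oplus\det\zeta)=0,\qquad w_2(\zeta\oplus\det\zeta)=w_2+w_1^2. \]
This matches the $\pinminus$ obstruction.

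The main obstacle is making "equivalent" precise as a bijection of structures, not merely as equality of obstruction classes. The difficulty is that spin structures on $\zeta\oplus k\det\zeta$ also involve choosing an orientation. I would handle this by noting that $\zeta\oplus k\det\zeta$ with $k$ odd carries a canonical orientation, namely the isomorphism $\det(\zeta)^{\otimes(k+1)}\cong$ trivial. Both sides are then torsors over $H^1(Y;\Z/2)$, and the map between them is equivariant. Since this is exactly Kirby--Taylor's Lemma~1.7, I would in practice cite the result rather than redo the argument.
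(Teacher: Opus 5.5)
The paper gives no proof of this proposition. It is quoted from Kirby--Taylor's Lemma~1.7, and only the existence criteria are used later, in \cref{cor:equiv-pinplus-minus}. Your sketch is a correct reconstruction of the standard argument:
\begin{itemize}
\item The extension classes are right. On the diagonal $(\Z/2)^n\subset\OO(n)$, lifts of reflections that square to $+1$ and pairwise anticommute give $\sum_{i<j}x_ix_j$, the restriction of $w_2$. Squaring to $-1$ adds $\sum_i x_i^2$, the restriction of $w_1^2$.
\item Your Whitney sum computations are correct.
\item You correctly isolate the orientation issue and resolve it with the canonical trivialisation of $(\det\zeta)^{\otimes(k+1)}$ for $k$ odd.
\end{itemize}

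Compared with the bare citation, your route gives a self-contained derivation whose only non-formal input is the extension-class computation. One point should be tightened if you want the correspondence to be an explicit map, not just an abstract bijection of $H^1(Y;\Z/2)$-torsors. The equality $w_2(\zeta\oplus 3\det\zeta)=w_2(\zeta)$ in $H^2(Y;\Z/2)$ does not by itself send null-homotopies of one to null-homotopies of the other. You must fix once and for all, on $\BO$, a homotopy between $w_2\circ f$ and $w_2$, where $f(\zeta)=\zeta\oplus 3\det\zeta$. Such choices differ by $H^1(\BO;\Z/2)=\langle w_1\rangle$, so the correspondence is only determined up to translation by $w_1(\zeta)$. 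A choice-free alternative uses the group homomorphisms $\mathrm{Pin}^-(n)\to\Spin(n+1)$ and $\mathrm{Pin}^+(n)\to\Spin(n+3)$, induced on Clifford algebras by $v\mapsto ve_{n+1}$ and $v\mapsto ve_{n+1}e_{n+2}e_{n+3}$. These cover $A\mapsto A\oplus\det(A)I_k$ and are the identity on the central $\Z/2$, so they give a natural equivariant, hence bijective, map of torsors. None of this affects the existence statements, which are all the paper actually uses.
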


Here is an immediate corollary.

\begin{corollary}\label{cor:equiv-pinplus-minus}
Let $Y$ be a manifold with stable tangent bundle~$TY$ and stable normal bundle~$\nu Y$. 
\begin{enumerate}[(i)]
    \item\label{item:cor-equiv-pinplus-minus-i} The manifold $Y$ admits a tangential $\pinplus$ structure if and only if $w_2(TY)=0$, if and only if $w_2(\nu Y) + w_1(\nu Y)^2=0$, if and only if $Y$ admits an normal $\pinminus$ structure.  
\item\label{item:cor-equiv-pinplus-minus-ii} The manifold $Y$ admits a tangential $\pinminus$ structure if and only if $w_2(TY)+ w_1(TY)^2=0$, if and only if $w_2(\nu Y)=0$, if and only if $Y$ admits an normal $\pinplus$ structure. 
\end{enumerate}
\end{corollary}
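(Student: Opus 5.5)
The plan is to deduce everything from the Kirby--Taylor lemma just quoted, applied once to $\zeta = TY$ and once to $\zeta = \nu Y$. To pass between the two I will use the Whitney sum formula for $TY \oplus \nu Y \cong \varepsilon$ (trivial). The total Stiefel--Whitney classes satisfy $w(TY)\,w(\nu Y)=1$. Comparing degree one gives $w_1(\nu Y)=w_1(TY)$. Comparing degree two gives $w_2(TY)+w_1(TY)w_1(\nu Y)+w_2(\nu Y)=0$, that is,
\[ w_2(\nu Y) = w_2(TY) + w_1(TY)^2 , \qquad w_2(TY) = w_2(\nu Y) + w_1(\nu Y)^2 . \]

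For \eqref{item:cor-equiv-pinplus-minus-i}, part (i) of the lemma with $\zeta = TY$ says that a tangential $\pinplus$ structure exists if and only if $w_2(TY)=0$. By the displayed identity this is equivalent to $w_2(\nu Y)+w_1(\nu Y)^2=0$. By part (ii) of the lemma with $\zeta=\nu Y$, that condition is exactly the one for a $\pinminus$ structure on $\nu Y$, i.e.\ a normal $\pinminus$ structure.

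Item \eqref{item:cor-equiv-pinplus-minus-ii} is symmetric. Part (ii) with $\zeta=TY$ gives the condition $w_2(TY)+w_1(TY)^2=0$. This equals $w_2(\nu Y)$ by the first displayed identity. Part (i) with $\zeta = \nu Y$ then identifies $w_2(\nu Y)=0$ with the existence of a normal $\pinplus$ structure.

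There is no real obstacle here. The only point needing care is the degree-two term of the Whitney formula: the cross term $w_1(TY)w_1(\nu Y)$ becomes $w_1(TY)^2$, and this is what swaps $\pinplus$ and $\pinminus$ when passing between tangent and normal bundles. Since the lemma is stated for stable bundles, it applies to $\nu Y$ without change.
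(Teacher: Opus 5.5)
Your proposal is correct and is essentially the argument the paper intends. The paper gives no written proof and simply calls this an immediate corollary of the Kirby--Taylor lemma; applying that lemma to $TY$ and to $\nu Y$, together with the Whitney-sum identities $w_1(\nu Y)=w_1(TY)$ and $w_2(\nu Y)=w_2(TY)+w_1(TY)^2$, is exactly the deduction being left to the reader.
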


\begin{remark}
    The usual convention is to write $w_i(Y):=w_i(TY)$, but since the normal bundle also plays a large r\^{o}le in this article we preserve the $T$ in the notation to avoid potential confusion. 
\end{remark}

\begin{proposition}[{\cite{kirby-taylor:pin}*{\S 2}}]\label{lemma-existence-of-pin-plus-structure}
Every 3-manifold $Y$ has a tangential $\pinminus$ structure, and it has a tangential $\pinplus$ structure if and only if $w_1^2(TY)=0$. 
\end{proposition}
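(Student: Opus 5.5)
We need to show every 3-manifold $Y$ admits a tangential $\pinminus$ structure, and that it admits a tangential $\pinplus$ structure if and only if $w_1(TY)^2=0$. The plan is to reduce both claims, via \cref{cor:equiv-pinplus-minus}, to statements about Stiefel--Whitney classes of $TY$. We then verify those statements with the Wu formula.

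By \cref{cor:equiv-pinplus-minus}, $Y$ admits a tangential $\pinplus$ structure if and only if $w_2(TY)=0$, and a tangential $\pinminus$ structure if and only if $w_2(TY)+w_1(TY)^2=0$. So it suffices to establish the single identity
\[
w_2(TY) = w_1(TY)^2 \in H^2(Y;\Z/2)
\]
for every closed 3-manifold $Y$. Given this, the $\pinminus$ condition $w_2+w_1^2=2w_1^2=0$ always holds. The $\pinplus$ condition $w_2=0$ becomes $w_1^2=0$.

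To prove the identity, use the Wu formula $w(TY)=\Sq(v)$, where $v=1+v_1+v_2+\cdots$ is the total Wu class. On a closed $n$-manifold the classes $v_i$ vanish for $i>n/2$, since $\Sq^i$ is zero on $H^{n-i}$ when $i>n-i$. For $n=3$ this gives $v=1+v_1$, so
\[
w(TY)=\Sq(1+v_1)=1+v_1+v_1^2 .
\]
Comparing degrees gives $w_1=v_1$ and $w_2=v_1^2=w_1^2$, which is the identity.

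An alternative route to the same identity: over $\Z/2$, $Y$ is null-bordant in $\mathfrak N_3=0$, so all Stiefel--Whitney numbers vanish. That alone only controls numbers, not classes, so the Wu formula is the cleaner argument.

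The only step needing care is the vanishing $v_2=0$ in dimension $3$. By definition $v_2$ is characterised by $\langle v_2\cup x,[Y]\rangle=\langle \Sq^2x,[Y]\rangle$ for $x\in H^1(Y;\Z/2)$. Here $\Sq^2x=0$ because $\deg x=1<2$. By Poincaré duality this forces $v_2=0$.

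Everything else is formal bookkeeping with \cref{cor:equiv-pinplus-minus}. This route also sidesteps orientability, since the Wu formula holds with $\Z/2$ coefficients for all closed manifolds.
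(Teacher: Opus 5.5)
Your proposal is correct and follows essentially the same route as the paper: you show $v_2=0$ because $\Sq^2$ vanishes on $H^1(Y;\Z/2)$ and by Poincar\'e duality, use the Wu formula $w=\Sq(v)$ to get $w_2(TY)=w_1(TY)^2$, and conclude via \cref{cor:equiv-pinplus-minus}. The only cosmetic difference is that you compute $\Sq(1+v_1)$ directly, whereas the paper reads off $w_2=v_2+\Sq^1(v_1)$ and then sets $v_2=0$.
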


\begin{proof}
Details were not given in \cite{kirby-taylor:pin}, so we provide them here. In this proof, for brevity, denote~$v_i:=v_i(TY)$ and $w_i:=w_i(TY)$ for the Wu and Stiefel--Whitney classes respectively. We similarly omit $TY$ from the notation for the total Wu and Stiefel--Whitney classes.

First, we show that $v_2=0$. By definition of the Wu class, we have that $v_2 \cup x = \Sq^2(x)$ for every $x \in H^1(Y;\Z/2)$, and $\Sq^2(x) =0$ because $\Sq^2$ always vanishes on $H^1$. By Poincar\'{e} duality it follows that $v_2=0$ as claimed.  

The Wu formula for the total classes is $w = \Sq(v)$. From this we can compute that $v_1=w_1$ and~$w_2 = v_2 + \Sq^1 (v_1) = v_2 + w_1^2$, hence $v_2 = w_2 + w_1^2$, as we are working over $\Z/2$.  Now since~$v_2=0$ we must have $w_2 + w_1^2=0$, so every 3-manifold admits a tangential $\pinminus$ structure. Also $w_2=0$ if and only if $w_1^2=0$, so a 3-manifold admits a tangential $\pinplus$ 
structure if and only if $w_1^2=0$. 
\end{proof}

\begin{lemma}\label{lemma:w_1-squared-zero-iff-Z4-lift}
Let $Y$ be a manifold. 
We have that $w_1^2(TY)=0$
if and only if $w_1(TY)\colon \pi_1(Y)\to \Z/2$ admits a lift to $\Z/4$ along the surjective map $\Z/4\onto \Z/2$.    
\end{lemma}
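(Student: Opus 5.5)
The natural tool is the Bockstein long exact sequence associated to the short exact sequence of coefficients $0\to\Z/2\to\Z/4\to\Z/2\to 0$. Write $w:=w_1(TY)\in H^1(Y;\Z/2)=\Hom(\pi_1(Y),\Z/2)$. The plan is to identify $w^2$ with the image of $w$ under the connecting homomorphism $\beta\colon H^1(Y;\Z/2)\to H^2(Y;\Z/2)$. Exactness of
\[H^1(Y;\Z/4)\to H^1(Y;\Z/2)\xrightarrow{\beta} H^2(Y;\Z/2)\]
then says that $w$ lies in the image of $H^1(Y;\Z/4)$ if and only if $\beta(w)=0$.

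The steps are as follows. First, I would recall the standard fact that this Bockstein equals $\Sq^1$. Second, I would use $\Sq^1(x)=x^2$ for any degree-one class $x$, which is the instability/Cartan property of Steenrod squares. Together these give $\beta(w)=w^2$.

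Third, I would translate the image condition into the statement of the lemma. We have $H^1(Y;\Z/4)=\Hom(\pi_1(Y),\Z/4)$, and the map $H^1(Y;\Z/4)\to H^1(Y;\Z/2)$ is post-composition with the reduction $\Z/4\onto\Z/2$. Hence $w$ lies in its image precisely when the homomorphism $w\colon\pi_1(Y)\to\Z/2$ lifts along $\Z/4\onto\Z/2$. Combining this with exactness and $\beta(w)=w^2$ proves the equivalence. This works for any connected space, so there is no need to use that $Y$ is a manifold.

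None of this is a serious obstacle. The only point deserving care is the identification $\beta=\Sq^1$ for the sequence $\Z/2\to\Z/4\to\Z/2$. If we want to avoid quoting it, there is a direct cochain-level check. Lift a cocycle $w$ to a $\Z/4$-valued cochain $\tilde w$ with values in $\{0,1\}$. Then $\delta\tilde w(g,h)=\tilde w(g)+\tilde w(h)-\tilde w(gh)$ equals $2$ exactly when $w(g)=w(h)=1$. So $\tfrac12\delta\tilde w=w\cup w$, which shows $\beta(w)=w^2$ directly.
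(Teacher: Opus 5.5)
Your proposal is correct and matches the paper's proof: the Bockstein sequence for $0\to\Z/2\to\Z/4\to\Z/2\to 0$, the identification $\beta=\Sq^1$ with $\Sq^1(w)=w^2$, and universal coefficients to read $H^1(Y;\Z/4)\to H^1(Y;\Z/2)$ as post-composition $\Hom(\pi_1(Y),\Z/4)\to\Hom(\pi_1(Y),\Z/2)$. The only difference is that the paper cites $\beta=\Sq^1$ as a standard fact, whereas you also give a correct direct cochain-level check that $\beta(w)=w\cup w$.
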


\begin{proof}
The exact sequence $0 \to \Z/2\to \Z/4\to \Z/2 \to 0$ yields an exact sequence 
\[H^1(Y;\Z/4)\to H^1(Y;\Z/2)\xrightarrow{\beta} H^2(Y;\Z/2),\] 
where $\beta$ denotes the Bockstein homomorphism. It is a standard property of Steenrod squares (see e.g.~\cite{hatcher-AT}*{Section~4L}) that $\beta=\Sq^1$. By the universal coefficient theorem and since $\Z/4$ and~$\Z/2$ are abelian, we thus have the exact sequence 
\[
\begin{tikzcd}
    \Hom(\pi_1(Y),\Z/4)\ar[r]   &\Hom(\pi_1(Y),\Z/2)\ar[r,"\beta=\Sq^1"]  &H^2(Y;\Z/2).
\end{tikzcd}
\]
We see from the sequence that $w_1(TY)\in \Hom(\pi_1(Y),\Z/2)$ lifts to a map to $\Z/4$ if and only if $w_1^2(TY)=\Sq^1(w_1(TY))=\beta(w_1(TY))=0$.
\end{proof}

We deduce the following characterisation for when tangential $\pinplus$ structures exist on $3$-manifolds.

\begin{proposition}\label{lemma:condition-for-pin-plus}
A closed, nonorientable 3-manifold $Y$ admits a tangential $\pinplus$ structure if and only if the orientation character $w \colon \pi_1(Y) \onto \Z/2$ factors as $\pi_1(Y) \to \Z/4 \twoheadrightarrow \Z/2$. 
\end{proposition}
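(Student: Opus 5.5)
This is a direct combination of \cref{lemma-existence-of-pin-plus-structure} and \cref{lemma:w_1-squared-zero-iff-Z4-lift}. First, by \cref{lemma-existence-of-pin-plus-structure}, the 3-manifold $Y$ admits a tangential $\pinplus$ structure if and only if $w_1^2(TY)=0$. Second, applying \cref{lemma:w_1-squared-zero-iff-Z4-lift} to $Y$, this vanishing holds if and only if the class $w_1(TY)$ lifts along $\Z/4\onto\Z/2$. Here we view $w_1(TY)$ as a homomorphism $\pi_1(Y)\to\Z/2$ via $H^1(Y;\Z/2)\cong\Hom(\pi_1(Y),\Z/2)$, and this homomorphism is exactly the orientation character $w$.

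A lift $\pi_1(Y)\to\Z/4$ of $w$ is precisely a factorisation $\pi_1(Y)\to\Z/4\onto\Z/2$ of $w$. Chaining the two equivalences therefore gives the statement.

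The nonorientability hypothesis only ensures that $w$ is surjective, so that writing $w\colon\pi_1(Y)\onto\Z/2$ is justified. Since $w$ is surjective, any lift $\pi_1(Y)\to\Z/4$ is itself surjective, because its composite with $\Z/4\onto\Z/2$ is onto and $\ker(\Z/4\to\Z/2)$ lies in every subgroup of $\Z/4$ mapping onto $\Z/2$. This observation is not needed for the equivalence.

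The only point needing care is the identification of the cohomology class $w_1(TY)$ with the homomorphism $w$. This is the universal coefficient identification already used in the proof of \cref{lemma:w_1-squared-zero-iff-Z4-lift}, so there is no real obstacle.
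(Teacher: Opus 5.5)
Your proposal is correct and matches the paper's proof, which simply combines \cref{lemma-existence-of-pin-plus-structure} ($\pinplus$ exists iff $w_1^2(TY)=0$) with \cref{lemma:w_1-squared-zero-iff-Z4-lift} ($w_1^2(TY)=0$ iff $w_1(TY)$ lifts to $\Z/4$). Your extra remarks on identifying $w_1(TY)$ with the orientation character, and on surjectivity of the lift, are accurate but not needed.
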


\begin{proof}
     Combine~\cref{lemma-existence-of-pin-plus-structure,lemma:w_1-squared-zero-iff-Z4-lift}.
\end{proof}

This will be used in \cref{sec:examples}, when we seek to apply \cref{thm:pinplus-summary} below. 
The following is also from Kirby--Taylor \cite{kirby-taylor:pin}.

\begin{proposition}[\cite{kirby-taylor:pin}*{Theorems~5.1 and 5.2}]
\label{prop:Kirby-taylor}
\leavevmode
\begin{enumerate}[(i)]
\item\label{item:kirby-taylor-i}
There are isomorphisms $\Omega_3^\pinplus\xrightarrow{\cong}\Omega_2^\spin \cong \Z/2$, given by taking a spin surface dual to $w_1$, and then taking the Arf invariant of its associated quadratic form. 
\item\label{item:kirby-taylor-ii}
There is an isomorphism $\Omega^{\pinplus}_4 \cong \Z/16$, under which $[K3]$ maps to $8 \in \Z/16$.
\end{enumerate}
\end{proposition}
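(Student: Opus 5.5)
This is a cited result of Kirby--Taylor. The plan below reconstructs it in two parts: a dual-surface map for~\eqref{item:kirby-taylor-i}, and an upper bound on the order of $\Omega_4^{\pinplus}$ matched by a $\Z/16$-valued invariant for~\eqref{item:kirby-taylor-ii}.

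\emph{Part~\eqref{item:kirby-taylor-i}: defining the map.} Let $(Y,\mathfrak{s})$ be a $\pinplus$ 3-manifold. Take a section of the determinant line bundle $\det(TY)$ transverse to the zero section, and let $F$ be its zero set. Then $F$ is a surface Poincar\'e dual to $w_1(TY)$, and its normal bundle is $\det(TY)|_F$. Consequently $w_1(TF)=w_1(TY)|_F+w_1(\det)|_F=0$, so $F$ is orientable.

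\emph{Part~\eqref{item:kirby-taylor-i}: inducing a spin structure on $F$.} The $\pinplus$ structure is a spin structure on $TY\oplus 3\det(TY)$. Restricted to $F$ this becomes a spin structure on $TF\oplus 4\det(TY)|_F$. The bundle $4L$ carries a canonical spin structure for any line bundle~$L$, so we obtain a spin structure on $TF$. Applying the same construction to a $\pinplus$ bordism $W$, with a transverse section of $\det(TW)$ extending the one on the boundary, shows the map $\Omega_3^{\pinplus}\to\Omega_2^{\spin}$ is well defined. Composing with the Arf invariant, which is an isomorphism $\Omega_2^\spin\cong\Z/2$, gives the map in the statement.

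\emph{Part~\eqref{item:kirby-taylor-i}: isomorphism.} For surjectivity I would exhibit one explicit example: a $\pinplus$ structure on $S^1\times K$, with $K$ the Klein bottle, whose dual surface is a torus carrying the Lie (Arf invariant $1$) framing. For injectivity there are two routes. First, I would bound $|\Omega_3^{\pinplus}|\le 2$ using the Atiyah--Hirzebruch spectral sequence for $\Omega_*^{\pinplus}\cong\pi_*(M\Spin\wedge\Sigma^{-3}MO(3))$, i.e.\ $H_p(\BO(1);\Omega_q^\spin$ twisted$)$. The relevant terms are $H_3(\RP^\infty;\Z^-)=0$, $H_2(\RP^\infty;\Z/2)$ and $H_1(\RP^\infty;\Z/2)$, and a $d_2$ differential (dual to $\Sq^2$) kills all but one copy of $\Z/2$. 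Alternatively, injectivity follows geometrically: if $F$ is spin null-bordant, perform the corresponding surgeries on $Y$ near $F$ to make $w_1$ trivial, reducing to an orientable spin 3-manifold, which is null-bordant since $\Omega_3^\spin=0$.

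\emph{Part~\eqref{item:kirby-taylor-ii}: upper bound.} I would again use the spectral sequence (or the Adams spectral sequence over $\mathcal{A}(1)$, which is cleaner for extension problems). The $E^2$ terms on the line $p+q=4$ are $H_4(\RP^\infty;\Z^-)$, $H_3(\RP^\infty;\Z/2)$, $H_2(\RP^\infty;\Z/2)$ and $H_0(\RP^\infty;\Z^-)$, each of which is cyclic of order at most~$2$. This gives $|\Omega_4^{\pinplus}|\le 16$; resolving the extension problem, i.e.\ showing the group is actually cyclic, is part of the next step.

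\emph{Part~\eqref{item:kirby-taylor-ii}: $\Z/16$-valued invariant and the value on $K_3$.} For the lower bound I would use the $\eta$-invariant of the $\pinplus$ Dirac operator, which defines a homomorphism $\Omega_4^{\pinplus}\to\Z/16$. I would compute that it takes $\RP^4$ to a generator, which together with the upper bound yields $\Omega_4^{\pinplus}\cong\Z/16$. On a closed spin 4-manifold this invariant reduces to a multiple of the index of the Dirac operator, that is, of $\sigma/16$. Hence $K_3$, which has $\sigma=-16$, i.e.\ generates $\Omega_4^\spin\cong\Z$, maps to an element of order exactly~$2$, namely $8\in\Z/16$.

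\emph{Main obstacle.} The hardest step is the extension problem together with the exact $\eta$-invariant computation for $\RP^4$, since these are what distinguish $\Z/16$ from a group such as $\Z/8\oplus\Z/2$. The differentials and the well-definedness arguments above are routine by comparison.
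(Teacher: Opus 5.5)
The paper gives no proof of this proposition. It is quoted from Kirby--Taylor and used as a black box, so your outline is a reconstruction rather than a variant of an argument in the text.

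\textbf{Part (i).} The main line is sound. The dual-surface map is well defined once you fix one of the two lifts of $-I\in\mathrm{SO}(4)$ to $\Spin(4)$. The two resulting spin structures on $4L$ differ by $w_1(L)$, so ``canonical'' should read ``natural, for a fixed convention''. $S^1\times K$ gives surjectivity, taking the $\pinplus$ structure on $K$ that restricts to the Lie framing on the fibre circle (which is the curve dual to $w_1(K)$). Your spectral sequence is exactly the paper's James spectral sequence $H_p(\Z/2;(\Omega_q^{\Spin})^w)\Rightarrow\Omega_*(\xi_{\Z/2})$ for the data $(\Z/2,1,1)$, since tangential $\pinplus$ structures are normal $\pinminus$ structures, i.e.\ $\xi_{\Z/2}$-structures.

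Three smaller corrections here:
\begin{itemize}
\item The differential doing the work in degree $3$ is $d_2\colon E^2_{2,1}\to E^2_{0,2}$. It is dual to $x\mapsto\Sq^2x+\Sq^1x\cup w+x\cup v$ on $H^0$, i.e.\ $1\mapsto v=w^2\neq 0$. The untwisted $\Sq^2$ vanishes on $H^0$, so ``dual to $\Sq^2$'' only holds with the twist.
\item The relevant spectrum is $M\Spin\wedge\Sigma^{-3}\mathrm{Th}(3\gamma_1\to\BO(1))$, not $\Sigma^{-3}MO(3)$.
\item Your geometric alternative for injectivity is incomplete. To attach a thickening of a spin null-bordism $U$ of $F$ to $Y\times I$, the normal line bundle $\det(TY)|_F$ must extend over $U$, which is an extra condition you do not address. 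The spectral-sequence bound already suffices, so drop that route.
\end{itemize}

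\textbf{Part (ii).} One inference does not follow: ``the invariant is a multiple of $\sigma/16$ on spin manifolds, hence $K_3$ has order exactly $2$''. An unspecified multiple could be $0$ modulo $16$. No analytic input is needed for this step. Once you have a surjection $\Omega_4^{\pinplus}\onto\Z/16$, the order count forces every $E^2$-term on the line $p+q=4$ to survive. In particular $E^\infty_{0,4}=H_0(\Z/2;\Z^-)\cong\Z/2$. The forgetful map $\Omega_4^{\Spin}\to\Omega_4^{\pinplus}$ is the edge homomorphism $\Omega_4^{\Spin}\onto E^2_{0,4}=E^\infty_{0,4}\subseteq\Omega_4^{\pinplus}$. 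So the generator $K_3$ maps to the unique element of order $2$ in $\Z/16$, namely $8$. This is the same argument the paper uses for $\Omega_4(\xi_\Z)$ in the proof of \cref{thm:orientable-case}.

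With that repair, your route is a legitimate alternative to the citation. The cyclicity of $\Omega_4^{\pinplus}$, the only non-formal input, is still imported, now from the analytic computation of the $\eta$-invariant of $\RP^4$ (Stolz) rather than from Kirby--Taylor.
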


\begin{remark}
It is also true that $\Omega^{\pinminus}_4=0$. For the purposes of this paper, this explains why it is not so helpful to look at $\pinminus$  structures, even though every 3-manifold admits a $\pinminus$ structure. 
\end{remark}

The following theorem  proves \cref{thm:some-stably-exotic-fillings-exist}~\eqref{main-thm-item-iii}.

\begin{theorem}\label{thm:pinplus-summary}
    Let $Y$ be a closed 3-manifold. 
    If $Y$ admits a null-bordant tangential $\pinplus$ structure, then $Y$ admits stably exotic fillings. 
\end{theorem}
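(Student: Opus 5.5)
The plan is to reduce to \cref{thm:main-characterisation}~\eqref{item-main-characterisation-ii} with the normal $1$-type data $(G,w,v)=(\Z/2,1,1)$. The closed-manifold hypothesis of that theorem is supplied by \cref{thm:kreck-Z-2-1-1-has-closed-stable-exotica}. If $Y$ is orientable, the conclusion is already \cref{thm:orientable-case}, so we assume $Y$ is nonorientable. We take $\varphi:=w_1^Y\colon \pi_1(Y)\to\Z/2$, which is surjective because $Y$ is nonorientable.

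First we check that $(\Z/2,1,1,\varphi)$ is $Y$-filling normal $1$-type data, in the sense of \cref{def:relY}, using the normal Stiefel--Whitney classes $w_i(\nu Y)$. Clearly $\varphi^*(1)=w_1(\nu Y)=w_1(TY)$. For the second class, the tangential $\pinplus$ structure gives $w_2(TY)=0$, and \cref{lemma-existence-of-pin-plus-structure} gives $w_1(TY)^2=0$. Hence
\[ w_2(\nu Y)=w_2(TY)+w_1(TY)^2=0, \]
while $\varphi^*(v)=\varphi^*(x^2)=w_1(TY)^2=0$, where $x$ generates $H^1(\Z/2;\Z/2)$. So the two classes agree, as required.

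The main step is to identify $\xi_{\Z/2}\colon B_{\Z/2}\to \BO$ with the normal $\pinminus$ fibration $\mathrm{BPin}^-\to\BO$, i.e.\ with tangential $\pinplus$ structures. I will show that both are the homotopy fibre of the same map $\BO\to K(\Z/2,2)$.

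\begin{itemize}
\item By \eqref{eqn:defn-of-B_G}, a $\xi_{\Z/2}$-structure on $W$ is a stable bundle $\nu$ together with a map $c\colon W\to\RP^\infty$ such that $w_1(\nu)=c^*x$ and $w_2(\nu)=c^*x^2$, with a chosen null-homotopy.
\item The first condition forces $c$ to classify $w_1(\nu)$, so $K(\Z/2,1)$ can be eliminated. Then $B_{\Z/2}$ is the homotopy fibre of $w_2+w_1^2\colon\BO\to K(\Z/2,2)$.
\item By the Kirby--Taylor description, this is precisely $\mathrm{BPin}^-$, since a $\pinminus$ structure on $\zeta$ is a spin structure on $\zeta\oplus\det\zeta$.
\item Under this identification, the induced map on $\pi_1$ is $w_1$. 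This matches $\Omega_4(\xi_{\Z/2})\cong\Omega_4^{\pinplus}\cong\Z/16$ with $[K_3]\mapsto 8$, consistent with Kreck's closed examples.
\end{itemize}

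By \cref{cor:equiv-pinplus-minus}, normal $\pinminus$ structures correspond to tangential $\pinplus$ structures. This yields $\Omega_3(\xi_{\Z/2})\cong\Omega_3^{\pinplus}$, and tangential $\pinplus$ structures on $Y$ correspond to $\xi_{\Z/2}$-structures $\ol\nu_\varphi$ inducing $\varphi=w_1^Y$ on $\pi_1$.

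Now take the given null-bordant tangential $\pinplus$ structure on $Y$. The corresponding $\xi_{\Z/2}$-structure $\ol\nu_\varphi$ satisfies $[Y,\ol\nu_\varphi]=0\in\Omega_3(\xi_{\Z/2})$ and induces the surjection $\varphi$. All hypotheses of \cref{thm:main-characterisation}~\eqref{item-main-characterisation-ii} therefore hold, and $Y$ admits stably exotic fillings with normal $1$-type data $(\Z/2,1,1)$.

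I expect the main obstacle to be the fibre homotopy equivalence $B_{\Z/2}\simeq\mathrm{BPin}^-$ over $\BO$. It needs care with the $H$-space sum in \eqref{eqn:defn-of-B_G} and with showing that the $K(\Z/2,1)$ factor is redundant up to contractible choice. I would handle it by writing $B_{\Z/2}$ as an iterated homotopy pullback: first over $w_1$, which kills the $\RP^\infty$ coordinate, and then over $w_2+w_1^2$.
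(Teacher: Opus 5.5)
Your proof is correct. Its global structure matches the paper's: you dispose of orientable $Y$ via \cref{thm:orientable-case}, take $\varphi=w_1^Y$ with the data $(\Z/2,1,1)$, get the closed examples from \cref{thm:kreck-Z-2-1-1-has-closed-stable-exotica}, and apply \cref{thm:main-characterisation}~\eqref{item-main-characterisation-ii}.

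You differ only in how the $\xi_{\Z/2}$-null-bordism of $Y$ is produced.

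The paper never identifies $B_{\Z/2}$ with a known classifying space. It takes a tangential $\pinplus$ filling $X$ and notes that $\wt X$ is spin and $w_2(\nu X)=w_1(\nu X)^2$. Hence the orientation character $\pi_1(X)\to\Z/2$ pulls $(w,v)$ back to the normal $1$-type data of $X$. \cref{lemma:induced-map-on-1-types} then gives a map $B_{\pi_1(X)}\to B_{\Z/2}$ over $\BO$. Composing with a normal $1$-smoothing of $X$ yields a $\xi_{\Z/2}$-structure on $X$ whose restriction to $Y$ induces $\varphi$. This route is lighter. It uses only the existence of a filling with $w_2(TX)=0$ and machinery already set up in the paper, and it avoids the homotopy-fibre bookkeeping you flag as the main obstacle. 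Your explicit Wu-class check of the $Y$-filling condition is implicit there.

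Your fibre homotopy equivalence $B_{\Z/2}\simeq\mathrm{hofib}(w_2+w_1^2)\simeq\mathrm{BPin}^-$ over $\BO$ is sound. The $K(\Z/2,1)$ coordinate drops out because translation by $w_1$ is a self-equivalence of $K(\Z/2,1)$. This identification buys more than the paper's argument: it gives $\Omega_*(\xi_{\Z/2})\cong\Omega_*^{\pinplus}$. Since every $\xi_{\Z/2}$-structure on $Y$ induces $w_1^Y$ on $\pi_1$, it follows that for nonorientable $Y$ the hypotheses of \cref{thm:main-characterisation}~\eqref{item-main-characterisation-ii} with data $(\Z/2,1,1)$ hold precisely when $Y$ admits a null-bordant tangential $\pinplus$ structure. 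It also explains $[K_3]\mapsto 8\in\Z/16$.
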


\begin{proof}
We already proved \cref{thm:pinplus-summary} in the case that $Y$ is orientable, in \cref{thm:orientable-case}. Hence we can restrict here to the case that $Y$ is nonorientable. 

Let $\varphi\colon \pi_1(Y)\onto \Z/2$ be the orientation character of $Y$. Here $\varphi$ is surjective since $Y$ is nonorientable. Let $\xi_{\Z/2}$ be the normal $1$-type for the normal 1-type data $(\Z/2,w=1,v=1)$, constructed as described in \cref{subsec:normal-1type-date}. Here $w \in H^1(\Z/2;\Z/2)\cong \Z/2$ and $v \in H^2(\Z/2;\Z/2)\cong \Z/2$ are the generators. 

We claim that $[Y,\overline{\nu}_\varphi]=0\in\Omega_3(\xi_{\Z/2})$ for some $\xi_{\Z/2}$-structure $\overline{\nu}_\varphi$ inducing $\varphi \colon \pi_1(Y) \to \pi_1(B_{\Z/2}) = \Z/2$. Indeed, let~$X$ be a (hypothesised) tangential $\pinplus$ filling of $Y$, and let $w_1^X \colon X \to K(\Z/2,1)$ be the map corresponding to the orientation character of $X$, which is such that the composition 
\begin{equation}\label{eqn:composition}
\pi_1(Y) \to \pi_1(X) \xrightarrow{(w_1^X)_*} \Z/2    
\end{equation} 
equals $\varphi$.  
Since $X$ has a tangential $\pinplus$ structure, $\wt X$ is spin and $w_1(\nu X)^2 + w_2(\nu X) = 0$ by \cref{cor:equiv-pinplus-minus}~\eqref{item:cor-equiv-pinplus-minus-i}. Using the cup product structure on $K(\Z/2,1)\simeq\RP^\infty$, naturality, the fact that~$w$ pulls back to $w_1(\nu X)$, and finally the previous sentence, we therefore have \[(w_1^X)^*(v) = (w_1^X)^*(w^2) = (w_1^X)^*(w)^2 = w_1(\nu X)^2 = w_2(\nu X) \in H^2(X;\Z/2).\] 
By \cref{lemma:induced-map-on-1-types} we obtain a map $(w_1^X)_B \colon B_{\pi_1(X)} \to B_{\Z/2}$ over $\BO$ that induces $(w_1^X)_*$ on fundamental groups. Thus $X$ admits a $\xi_{\Z/2}$-structure $(w_1^X)_B\circ \ol{\nu}_X$, where $\ol{\nu}_X$ is a normal $1$-smoothing of $X$. This $\xi_{\Z/2}$-structure on $X$ induces a $\xi_{\Z/2}$-structure $\ol{\nu}_\varphi$ on $Y$ that gives $\varphi$ on fundamental groups by \eqref{eqn:composition}. We see that $X$ is a null-bordism witnessing that $[Y,\ol{\nu}_\varphi] = 0 \in \Omega_3(\xi_{\Z/2})$. This completes the proof of the claim. 

We know from \cref{thm:kreck-Z-2-1-1-has-closed-stable-exotica}  that the normal 1-type data $(\Z/2,1,1)$ admits closed stably exotic 4-manifolds. 
Hence we may apply \cref{thm:main-characterisation}~\eqref{item-main-characterisation-ii} to see that $Y$ admits stably exotic fillings.
\end{proof}

\begin{remark}
    There is a free transitive action of $H^1(Y;\Z/2)$ on the set of $\pinplus$ structures, and in applications of \cref{thm:pinplus-summary},  once we know that $Y$ admits a $\pinplus$ structure, we are free to try to modify it by this action to arrange for $Y$ to be $\pinplus$-null-bordant. 
\end{remark}

\section{Examples}\label{sec:examples}

We provide the $\pinplus$ examples promised in the introduction. 
The first aim is to prove that the examples in \cref{example:orientation-reversing-monodromy} behave as asserted. 
For this we need a couple of lemmas, before \cref{prop:lagrangian-fixed-point-stable-filling}, which is the main result of this section. 

First, we set up some notation. 
Let $F$ be a closed, orientable surface of genus $g$. 
For a $\Z/2$ vector space $P$, we write $P^* := \Hom_{\Z/2}(P,\Z/2)$ for the dual vector space. 
For any $i \geq 0$, let $\ev \colon H^i(F;\Z/2) \xrightarrow{\cong} H_i(F;\Z/2)^*$ denote the evaluation map. 
We will consider the $\Z/2$-valued \emph{intersection form}
\[\lambda_F^{\Z/2} \colon H_1(F;\Z/2) \times H_1(F;\Z/2) \to \Z/2; \quad (x,y) \mapsto \big(\ev \circ PD^{-1}(y)\big) (x)\] 
and the Poincar\'{e} dual \emph{cup product form}
\[- \smile - \colon H^1(F;\Z/2) \times H^1(F;\Z/2) \to \Z/2; \quad (f,h) \mapsto \ev(f \smile h) [F]_{\Z/2}.\]
Both can be represented by the diagonal sum of $g$ copies of the standard symplectic matrix $\bsm 0 & 1 \\  1 & 0  \esm$. 
A \emph{Lagrangian} for either pairing is a direct summand $L$ with $L = L^{\perp}$. The proof we give of the next lemma was suggested to us in this formulation by Csaba Nagy; it is closely related to the proof of \cite{Nagy-2024}*{Theorem~7.6}. 

\begin{lemma}\label{lemma:lagrangian-complement}
    Let $(P,\psi)$ be a hyperbolic form over $\Z/2$ of dimension $2g$ and let $V$ be a subspace of $P$ of dimension at least $g$. Then there is a Lagrangian $K \subseteq P$ such that $K +V = P$. 
\end{lemma}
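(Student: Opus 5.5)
We may shrink $V$ to a subspace of dimension exactly $g$. This is harmless, since $K+V'=P$ for $V'\subseteq V$ implies $K+V=P$. Because $\dim K=g$, the condition $K+V=P$ is then equivalent to $K\cap V=0$. So the plan is to show that every $g$-dimensional subspace $V$ of a hyperbolic form $(P,\psi)$ of rank $2g$ over $\Z/2$ has a complementary Lagrangian. We argue by induction on $g$. The case $g=0$ is trivial.

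For the inductive step we split $P$ into a hyperbolic plane and its orthogonal complement, chosen adapted to $V$. Consider the radical $R:=V\cap V^\perp$.

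\emph{Case 1: $R\neq V$.} Then $\psi|_V$ is not identically zero, so there are $a,b\in V$ with $\psi(a,b)=1$. They span a hyperbolic plane $H\subseteq V$, and $P=H\oplus H^\perp$ with $H^\perp$ hyperbolic of rank $2g-2$. Hence $V=H\oplus (V\cap H^\perp)$, and $V\cap H^\perp$ has dimension $g-2$. Enlarge it to a subspace $V'\subseteq H^\perp$ of dimension $g-1$ and apply induction to get a Lagrangian $K'\subseteq H^\perp$ with $K'\cap V'=0$. Now we need a line $\ell\subseteq H$ that is isotropic in $H$ with $K=\ell\oplus K'$ meeting $V$ trivially. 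But $H\subseteq V$, so every such $\ell$ meets $V$, and this naive choice fails. We fix this instead by taking $\ell$ spanned by $h+k$ for suitable $h\in H$ and $k\in K'$, i.e.\ by tilting. This is getting messy, so a cleaner approach is preferable.

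\emph{Cleaner approach (the one I would actually carry out).} Induct by peeling off a vector outside $V$. Choose $x\notin V$ with $\psi(x,x)=0$. Every vector is isotropic for the symmetric form when $\psi$ is even; if $\psi$ is the standard symplectic form over $\Z/2$, it is alternating, so every vector is isotropic. Put $V_x:=V\cap x^\perp$. We would like $x$ to lie in the Lagrangian, so we pass to the reduction $x^\perp/\langle x\rangle$, which is hyperbolic of rank $2g-2$. Lagrangians of $P$ containing $x$ correspond bijectively to Lagrangians of $x^\perp/\langle x\rangle$.

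\emph{The key choice.} We want $x\notin V$ such that $V\not\subseteq x^\perp$. Then $V_x$ has dimension $g-1$. Since $x\notin V$, the image $\overline{V_x}$ of $V_x$ in $x^\perp/\langle x\rangle$ still has dimension $g-1$. Induction gives a Lagrangian $\overline{K}$ there with $\overline K\cap\overline{V_x}=0$, and we lift it to a Lagrangian $K\ni x$ in $P$. Then $K\cap V\subseteq x^\perp\cap V=V_x$. If $v\in K\cap V_x$, then $\bar v\in\overline K\cap\overline{V_x}=0$, so $v\in\langle x\rangle\cap V=0$. Hence $K\cap V=0$.

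\emph{Main obstacle: existence of such an $x$.} We need $x\notin V$ with $x\notin V^\perp$, where $V^\perp$ is exactly the set of $x$ with $V\subseteq x^\perp$. Both $V$ and $V^\perp$ are proper subspaces of $P$: $V$ has dimension $g<2g$ when $g\ge 1$, and $V^\perp$ has dimension $g$. Over $\Z/2$, a vector space is the union of two proper subspaces never, since $|V\cup V^\perp|\le 2^g+2^g-1<2^{2g}$ for $g\ge1$. So such an $x$ exists, and the induction closes.

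The remaining routine checks are the following. The reduction $x^\perp/\langle x\rangle$ is again a nondegenerate alternating form, hence hyperbolic. Lagrangians lift, via the preimage of $\overline K$, which is isotropic of dimension $g$, hence Lagrangian. The isotropy of $x$ holds because the form is alternating, which is exactly why it is represented by the symplectic matrix.
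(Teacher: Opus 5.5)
Your ``cleaner approach'' is correct, and it takes a genuinely different route from the paper's. The paper argues directly, without induction. With $\dim V=g$, it sets $Z=V\cap V^\perp$ and splits $V=U\oplus Z$ and $V^\perp=U'\oplus Z$. It observes that $\psi$ is nonsingular on $U$ and on $U'$, so that $P=(U\oplus U')\oplus(U\oplus U')^\perp$. It then takes $K=\Delta_f\oplus L$, where $\Delta_f$ is the graph of an isometry $f\colon U\to U'$ and $L$ is a Lagrangian complement to $Z$ in $(U\oplus U')^\perp$.

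You instead do symplectic reduction along an isotropic $x\notin V\cup V^\perp$, and each of your checks holds:
\begin{itemize}
\item $V\cap x^\perp$ has dimension $g-1$;
\item its image in $x^\perp/\langle x\rangle$ keeps that dimension, since $x\notin V$;
\item the preimage of $\overline K$ is isotropic of dimension $g$, hence Lagrangian;
\item $K\cap V=0$, as you show.
\end{itemize}

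The paper's construction gives an explicit $K$ described in terms of the radical of $V$. It needs several auxiliary facts: the orthogonal splitting, an isometry $U\cong U'$, and a Lagrangian complement to $Z$. It also uses characteristic $2$ to see that $\Delta_f$ is isotropic. Your argument needs only two facts: a vector space is not a union of two proper subspaces, and reduction preserves nondegeneracy. So it is shorter and works verbatim for nondegenerate alternating forms over any field, at the price of producing $K$ only through a chain of choices. Delete the abandoned ``Case 1'' from the write-up.
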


\begin{proof}
By passing to a $g$-dimensional subspace of $V$, without loss of generality we can assume that~$V$ is rank $g$. Writing $j\colon V\to P$ for the inclusion, we have that $V^\perp$ is the kernel of the surjective map $j^*\circ \mathrm{ad}(\psi)\colon P\to V^*$, so  $\dim V^\perp=\dim P-\dim V^*=g = \dim V$. Let $Z:=V\cap V^\perp$ and choose direct sum decompositions $V=U\oplus Z$ and $V^\perp=U'\oplus Z$, so that $V+V^\perp=Z\oplus U\oplus U'$. Noting that $Z$ is the radical for the restriction of $\psi$ to both $V$ and to $V^\perp$, we have that $\psi$ restricts to a nonsingular form on both $U$ and $U'$. As $U$ and $U'$ have the same dimension, we may thus choose an isomorphism of nonsingular alternating forms $f\colon (U,\psi|_U)\to (U\,\psi|_{U'})$. Such an isomorphism determines a Lagrangian $\Delta_f:=\{(y,f(y))\,|\,y\in U\}\subseteq U\oplus U'$.  By the same argument as the second sentence of the proof, $\dim(U \oplus U')^{\perp} = \dim P - \dim(U \oplus U')$. 
The restriction of $\psi$ to $U\oplus U'$ is nonsingular, so $(U\oplus U')\cap (U\oplus U')^\perp=\{0\}$. Combining the two previous sentences, we have 
\begin{equation}\label{eqn:decomposition-of-P}
P=(U\oplus U')\oplus (U\oplus U')^\perp.
\end{equation}
We will now extend $\Delta_f$ to a full Lagrangian of $(P,\psi)$. Note that $Z\subseteq (U\oplus U')^\perp$. 
Since $\dim U + \dim Z = \dim V = g = \dim V^{\perp} = \dim U' + \dim Z$ we have that $2 \dim Z = 2g-\dim U - \dim U'$. Also $2g = \dim P = \dim U + \dim U' + \dim (U\oplus U')^{\perp}$, and hence $\dim (U\oplus U')^{\perp} = 2 \dim Z$, i.e.\ $Z\subseteq (U\oplus U')^\perp$ is half-rank.  Moreover we already noted that $\psi$ vanishes identically on $Z$, so $Z$ is isotropic.  
Since the decomposition in \eqref{eqn:decomposition-of-P} is by definition orthogonal with respect to $\psi$, and $\psi$ is nonsingular, $\psi$ restricts to a nonsingular form on $(U\oplus U')^\perp$.  It follows that $Z$ is a Lagrangian for the restriction in $(U\oplus U')^\perp$. Moreover $Z$ admits a complementary Lagrangian $L$,  so that $Z\oplus L= (U\oplus U')^\perp$.  
To find one,  start with a complementary summand $L'$. Since $\psi|$ is nonsingular, $Z \to (L')^*$ is an isomorphism, and hence by adding elements of $Z$ to the elements of a basis for~$L'$, we can change $L'$ to a Lagrangian $L$ that is still complementary to $Z$. 

For any complementary Lagrangian $L$ to $Z$ in $(U\oplus U')^\perp$, the submodule
\[
K:=\Delta_f\oplus L\subseteq (U\oplus U')\oplus (U\oplus U')^\perp
\]
is a Lagrangian, and is such that $K+V=P$.
\end{proof}

For a closed, orientable surface $F$, let \[\operatorname{ad}(- \smile -) \colon H^1(F;\Z/2) \to H^1(F;\Z/2)^*;\; \; f \mapsto (h \mapsto \ev(f \smile h) [F]_{\Z/2})\] denote the adjoint of the cup product form. Equivalently, $f \mapsto h \frown (f \frown [F]_{\Z/2})$. 

\begin{lemma}\label{lemma:surjective-to-dual-of-lagrangian}
     Let $F$ be a closed, orientable surface of genus $g$. Let $V$ be a summand of $H^1(F;\Z/2)  \cong (\Z/2)^{2g}$ whose rank is at least $g$. Then there is a Lagrangian $K \subseteq H^1(F;\Z/2)$ of the cup product form such that 
     \[V \into H^1(F;\Z/2) \xrightarrow{\operatorname{ad}(- \smile -)} H^1(F;\Z/2)^* \to K^*\]
     is surjective. 
\end{lemma}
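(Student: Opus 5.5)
The plan is to deduce this directly from \cref{lemma:lagrangian-complement}, applied to the hyperbolic form $(P,\psi) := (H^1(F;\Z/2), -\smile-)$. This form is nonsingular of dimension $2g$ and is represented by a sum of $g$ copies of $\bsm 0 & 1 \\ 1 & 0 \esm$. Since $V$ has dimension at least $g$, \cref{lemma:lagrangian-complement} provides a Lagrangian $K \subseteq H^1(F;\Z/2)$ with $K + V = H^1(F;\Z/2)$. I will show that this $K$ has the required property.

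Write $r \colon H^1(F;\Z/2)^* \to K^*$ for restriction of functionals. Consider the composite $\Phi := r \circ \operatorname{ad}(-\smile-) \colon H^1(F;\Z/2) \to K^*$, which sends $f$ to the functional $h \mapsto f \smile h$ on $K$. Two facts are needed.

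\begin{enumerate}[(i)]
\item $\Phi$ is surjective on all of $H^1(F;\Z/2)$. Indeed $\operatorname{ad}(-\smile-)$ is an isomorphism by nonsingularity (Poincar\'e duality), and restriction to a subspace is surjective over a field.
\item $\ker \Phi = K^{\perp} = K$, because $K$ is a Lagrangian.
\end{enumerate}

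To finish, take any $\alpha \in K^*$. By (i) there is some $f \in H^1(F;\Z/2)$ with $\Phi(f) = \alpha$. Using $K + V = H^1(F;\Z/2)$, write $f = k + u$ with $k \in K$ and $u \in V$. By (ii), $\Phi(k) = 0$, so $\Phi(u) = \alpha$. Hence $\Phi|_V$ is surjective, which is exactly the statement.

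The work is therefore entirely contained in \cref{lemma:lagrangian-complement}, and nothing here should be an obstacle. The only points to check are that the cup product form on $H^1(F;\Z/2)$ really is a hyperbolic form in the sense of that lemma, which holds by the symplectic basis, and that $\operatorname{ad}$ is nonsingular. The hypothesis that $V$ is a summand is automatic over $\Z/2$ and is not otherwise used.
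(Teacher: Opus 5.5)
Your proposal is correct and follows essentially the same route as the paper: obtain a Lagrangian $K$ with $K+V=H^1(F;\Z/2)$ from \cref{lemma:lagrangian-complement}, use that $K$ lies in the kernel of the composite to $K^*$, and conclude that $V$ already surjects. The paper phrases the last step as a diagram chase through $(K+V)/K$, which is equivalent to your decomposition $f=k+u$; note that only $K\subseteq K^{\perp}$ is needed there, not the full equality $\ker\Phi = K$.
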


\begin{proof}
By \cref{lemma:lagrangian-complement}, there exists a Lagrangian $K$ for the cup product form such that $K +V = H^1(F;\Z/2)$. Since $K$ is a Lagrangian, $K \subseteq K^{\perp}$ and hence the composition
\begin{equation}\label{eqn:lagrangian}
    K \into H^1(F;\Z/2) \xrightarrow{\operatorname{ad}(- \smile -)} H^1(F;\Z/2)^* \to K^*
\end{equation}
is the zero map.  We develop the diagram
\[\begin{tikzcd}
 & K + V \ar[d,"\cong"] \ar[rr,twoheadrightarrow] & &   (K + V)/K \ar[d] \\
   V \ar[r] \ar[ur]  & H^1(F;\Z/2) \ar[r,"\cong"] &  H^1(F;\Z/2)^* \ar[r,twoheadrightarrow] & K^*.
\end{tikzcd}\]
The bottom row is the composition in the statement of the lemma. 
The down-right-right composition $K +V \to K^*$ factors through $(K+V)/K$ by \eqref{eqn:lagrangian}, which defines the right vertical map.  
We have labelled several maps as being surjective or isomorphisms. We see that the composition $K+V \to K^*$ is surjective. It follows that  the right vertical map $(K + V)/K \to K^*$ is surjective. Moreover, observe that the upper composition $V \to K+V \to (K + V)/K$ is surjective. It follows that the lower composition $V \to K^*$ is surjective, as desired. 
\end{proof}

\begin{corollary}\label{cor:surjective-to-dual-of-lagrangian}
     Let $F$ be a closed, orientable surface of genus $g$. Let $V$ be a summand of $H^1(F;\Z/2) \cong (\Z/2)^{2g}$ of rank at least $g$. Then there is a Lagrangian $L \subseteq H_1(F;\Z/2)$ of the intersection form such that 
     \[V \into H^1(F;\Z/2) \xrightarrow{\ev} H_1(F;\Z/2)^* \to L^*\]
     is surjective. 
\end{corollary}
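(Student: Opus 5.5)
We deduce this from \cref{lemma:surjective-to-dual-of-lagrangian} by transporting everything along Poincar\'e duality $PD\colon H^1(F;\Z/2)\xrightarrow{\cong} H_1(F;\Z/2)$, $f\mapsto f\frown[F]_{\Z/2}$. The plan is to take the Lagrangian $K\subseteq H^1(F;\Z/2)$ for the cup product form provided by \cref{lemma:surjective-to-dual-of-lagrangian}, and to set $L:=PD(K)\subseteq H_1(F;\Z/2)$.

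The first step is to check that $PD$ is an isometry from the cup product form to the intersection form, at least up to the symmetry of both forms over $\Z/2$. Concretely, we verify the identity
\[
\lambda_F^{\Z/2}(PD(f),PD(h))=\ev(f\smile h)[F]_{\Z/2}.
\]
By definition, the left-hand side is $\ev(h)(f\frown[F]_{\Z/2})$. This equals $\ev(h\smile f)[F]_{\Z/2}$ by the standard cap/cup adjunction. Since cup product of degree one classes is graded commutative and we work mod $2$, this agrees with $\ev(f\smile h)[F]_{\Z/2}$. Consequently $PD$ carries Lagrangians to Lagrangians: it is an isomorphism of vector spaces, so it preserves direct summands, and it preserves orthogonal complements. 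Hence $L$ is a Lagrangian of $\lambda_F^{\Z/2}$.

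The second step is to identify the two composites. For $f\in V$ and $\ell=PD(k)\in L$ with $k\in K$, we compute $\ev(f)(\ell)=\ev(f)(k\frown[F]_{\Z/2})=\ev(f\smile k)[F]_{\Z/2}$. This is $\operatorname{ad}(-\smile-)(f)(k)$. Thus the composite
\[
V\to H_1(F;\Z/2)^*\to L^*
\]
equals the composite $V\to H^1(F;\Z/2)^*\to K^*$ of \cref{lemma:surjective-to-dual-of-lagrangian}, followed by the isomorphism $K^*\cong L^*$ dual to $PD|_K\colon K\xrightarrow{\cong}L$. The former composite is surjective by that lemma, so the required composite is surjective as well.

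The only point requiring care is the sign and ordering convention in the cap/cup identity $\langle h, f\frown[F]\rangle=\langle h\smile f,[F]\rangle$. Over $\Z/2$, however, this is harmless by commutativity, so I expect no real obstacle. The argument is purely a dictionary translation of \cref{lemma:surjective-to-dual-of-lagrangian}.
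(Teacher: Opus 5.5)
Your proposal is correct and follows essentially the same route as the paper: you take $K$ from \cref{lemma:surjective-to-dual-of-lagrangian}, set $L:=PD(K)$, use that $PD$ is an isometry from the cup product form to the intersection form, and identify the composite $V\to L^*$ with $V\to K^*$ up to the isomorphism induced by $PD|_K$. The paper packages your element-wise cap/cup computation as a commutative diagram with vertical maps $PD$ and $PD^*$, but the content is the same.
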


\begin{proof}
Let $K \subseteq H^1(F;\Z/2)$ be the Lagrangian as in \cref{lemma:surjective-to-dual-of-lagrangian}, and define $L := PD(K) \subseteq H_1(F;\Z/2)$. Since Poincar\'{e} duality determines an isometry from $-\smile -$ to $\lambda_{\Z/2}$, $L$ is a Lagrangian of $\lambda_{\Z/2}$. We have the following diagram. It is straightforward to check that it commutes. 
\[\begin{tikzcd}
    V \ar[r,hook] & H^1(F;\Z/2) \ar[r,"\ev"] \ar[d,"PD","\cong"']  & H_1(F;\Z/2)^*  \ar[d,"PD^*","\cong"'] \ar[r] & L^* \ar[d,"PD^*|_{L^*}","\cong"']  \\
& H_1(F;\Z/2) \ar[r,"\ev"] & H^1(F;\Z/2)^* \ar[r] & K^*.
\end{tikzcd}\]
The down-right composition $H^1(F;\Z/2) \to H^1(F;\Z/2)^*$ sends \[f \mapsto  \big(h \mapsto h(f \frown [F]_{\Z/2})\big) = \big(h \mapsto h \frown (f \frown [F]_{\Z/2})\big) = \big(h \mapsto \ev(h \smile f)[F]_{\Z/2}\big),\]
and so is $\operatorname{ad}(- \smile -)$. Thus $V$ surjects onto $K^*$ in this diagram by \cref{lemma:surjective-to-dual-of-lagrangian}, and hence by commutativity $V$ also surjects onto $L^*$. 
\end{proof}

We are now ready to provide the examples promised in \cref{example:orientation-reversing-monodromy}.

 \begin{proposition}\label{prop:lagrangian-fixed-point-stable-filling}
    Let $F$ be a closed, orientable surface of genus $g$ and let $F \to Y \to S^1$ be a fibre bundle with orientation-reversing monodromy $\varphi \colon F \to F$. Suppose that the fixed subgroup $H^1(F;\Z/2)^{\varphi^*} \subseteq H^1(F;\Z/2)$ has rank at least $g$. 
    Then $Y$ admits a null-bordant $\pinplus$ structure, and hence admits stably exotic fillings by \cref{thm:some-stably-exotic-fillings-exist}~\eqref{main-thm-item-iii}.
 \end{proposition}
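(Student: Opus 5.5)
The plan is to apply \cref{thm:pinplus-summary} directly. First I construct a tangential $\pinplus$ structure on $Y$. Then I use the free transitive action of $H^1(Y;\Z/2)$ on $\pinplus$ structures to modify it until its class in $\Omega_3^{\pinplus}$ vanishes, detecting that class with \cref{prop:Kirby-taylor}~\eqref{item:kirby-taylor-i}.

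\emph{Existence.} Let $p\colon Y\to S^1$ be the bundle projection. Since the monodromy reverses orientation, a loop in $Y$ is orientation-reversing if and only if it has odd degree under $p$. Hence $w_1(TY)=p^*(\theta)$, where $\theta$ is the generator of $H^1(S^1;\Z/2)$. The character $w_1\colon\pi_1(Y)\to\Z/2$ therefore factors as $\pi_1(Y)\to\pi_1(S^1)=\Z\to\Z/4\onto\Z/2$. By \cref{lemma:condition-for-pin-plus}, $Y$ admits a tangential $\pinplus$ structure.

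\emph{The invariant.} A fibre $F\subseteq Y$ is a surface dual to $w_1=p^*\theta$. A $\pinplus$ structure $\mathfrak s$ on $Y$ restricts to a spin structure on $F$, equivalently a quadratic refinement $q_{\mathfrak s}\colon H_1(F;\Z/2)\to\Z/2$ of $\lambda_F^{\Z/2}$. By \cref{prop:Kirby-taylor}~\eqref{item:kirby-taylor-i}, $[Y,\mathfrak s]=\operatorname{Arf}(q_{\mathfrak s})$. Two standard facts will be used. First, acting on $\mathfrak s$ by $a\in H^1(Y;\Z/2)$ changes the induced spin structure on $F$ by $i^*a\in H^1(F;\Z/2)$, so $q_{a\cdot\mathfrak s}=q_{\mathfrak s}+\ev(i^*a)$. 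Second, the Wang sequence
\[H^1(Y;\Z/2)\xrightarrow{i^*}H^1(F;\Z/2)\xrightarrow{1-\varphi^*}H^1(F;\Z/2)\]
shows that the image of $i^*$ is exactly the fixed subgroup $V:=H^1(F;\Z/2)^{\varphi^*}$. This subgroup is automatically a summand because we are working over a field, and it has rank at least $g$ by hypothesis.

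\emph{Killing the Arf invariant.} By \cref{cor:surjective-to-dual-of-lagrangian} there is a Lagrangian $L\subseteq H_1(F;\Z/2)$ such that $V\to L^*$, $f\mapsto\ev(f)|_L$, is surjective. On $L$ the form $\lambda_F^{\Z/2}$ vanishes, so $q_{\mathfrak s}|_L$ is linear, i.e.\ an element of $L^*$. Choose $f\in V$ with $\ev(f)|_L=q_{\mathfrak s}|_L$, and choose $a\in H^1(Y;\Z/2)$ with $i^*a=f$. Then $q_{a\cdot\mathfrak s}$ vanishes on the Lagrangian $L$. A quadratic refinement that vanishes on a Lagrangian has Arf invariant zero: take a symplectic basis adapted to $L$, so every term $q(e_j)q(f_j)$ has $q(e_j)=0$. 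Hence $[Y,a\cdot\mathfrak s]=0\in\Omega_3^{\pinplus}$, and \cref{thm:pinplus-summary} gives the stably exotic fillings.

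The main obstacle is the first of the two standard facts: that the Kirby--Taylor dual-surface construction is compatible with the $H^1$-action, so that twisting by $a$ changes the induced spin structure on $F$ exactly by $i^*a$. I would check this from the construction itself. The $\pinplus$ structure on $Y$ restricted to $F$ is a $\pinplus$ structure on $TF\oplus\nu_F$, with $\nu_F\cong\det(TY)|_F$ trivial. Twisting by $a$ then twists this restriction by $a|_F$, and that is the spin-structure change on $F$. A minor secondary check is the orientation-reversal claim, which gives $w_1=p^*\theta$ with the fibre as its dual.
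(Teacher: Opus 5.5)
Your proposal is correct and follows essentially the same route as the paper's proof. Both arguments factor $w_1$ through $\pi_1(S^1)\cong\Z$ to get a tangential $\pinplus$ structure, identify the image of $H^1(Y;\Z/2)\to H^1(F;\Z/2)$ with the fixed subgroup via the Wang sequence, and use \cref{cor:surjective-to-dual-of-lagrangian} to act by $H^1(Y;\Z/2)$ so that the induced spin structure on the fibre is bounding on a Lagrangian, which kills the Arf invariant by \cref{prop:Kirby-taylor}~\eqref{item:kirby-taylor-i}.
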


\begin{proof} 
    The orientation character factors through the bundle projection map as
    \[\pi_1(Y) \to \pi_1(S^1) \cong \Z \twoheadrightarrow \Z/2.\]
    Hence there is a lift to $\Z/4$ and so there is a tangential $\pinplus$ structure on $Y$ by \cref{lemma:condition-for-pin-plus}. 
    A copy of the fibre $F$ is dual to~$w_1$. This has an induced spin structure and, by \cref{prop:Kirby-taylor}~\eqref{item:kirby-taylor-i},~$Y$ is $\pinplus$ null-bordant if and only if $F$ is spin null-bordant. To arrange the latter, we are free to change the $\pinplus$ structure on $Y$, and hence the induced spin structure on $F$, using the action of~$H^1(Y;\Z/2)$. 

We have a Wang exact sequence
\begin{equation}\label{eq:wang}
H^0(F;\Z/2) \xrightarrow{0} H^0(F;\Z/2) \to H^1(Y;\Z/2) \to H^1(F;\Z/2) \xrightarrow{\varphi^* - \Id} H^1(F;\Z/2)
\end{equation}
leading to a short exact sequence
\[0 \to \Z/2 \to H^1(Y;\Z/2) \to H^1(F;\Z/2)^{\varphi^*} \to 0,\]
where $H^1(F;\Z/2)^{\varphi^*}$ denotes the fixed points under $\varphi^*$.

Let $\iota \colon F \to Y$ be the inclusion. If  $\im\big(\iota^* \colon H^1(Y;\Z/2) \to H^1(F;\Z/2)\big) = H^1(F;\Z/2)^{\varphi^*} \subseteq H^1(F;\Z/2)$ is at least half-rank, then by \cref{cor:surjective-to-dual-of-lagrangian}, there is a Lagrangian $L$ for the $\Z/2$-intersection form such that 
\[H^1(F;\Z/2)^{\varphi^*} \to H^1(F;\Z/2) \xrightarrow{\ev} H_1(F;\Z/2)^* \to L^*\]
     is surjective. 
    The group $H^1(Y;\Z/2)$ acts transitively on the $\pinplus$ structures on $Y$. The effect of the action of $x \in H^1(Y;\Z/2)$ on a spin structure of $F$ is the effect of $\iota^*(x)$. We can therefore make any change of spin structures on $F$ that corresponds to the action of an element of $ \im \iota^* = H^1(F;\Z/2)^{\varphi^*}$. Under the action, $\iota^*(x)$ changes the spin structure on a curve $\gamma \subseteq F$ if and only if $\ev \circ \iota^*(x)[\gamma] =1 \in \Z/2$. Since $H^1(F;\Z/2)^{\varphi^*}$  maps surjectively to $L^*$, it follows that by changes of the $\pinplus$ structure on $Y$, we can change the spin structure induced on $F$ to realise any desired spin structure on the Lagrangian $L$. Specifically, we make the spin structure into the bounding one on $L$. After that, $F$ spin bounds, and hence $Y$ admits a  $\pinplus$ filling by \cref{prop:Kirby-taylor}~\eqref{item:kirby-taylor-i}.  
\end{proof}

Next  we  give an example, also promised in \cref{example:orientation-reversing-monodromy}, demonstrating that   \cref{prop:lagrangian-fixed-point-stable-filling} is false in general without the hypothesis that $H^1(F;\Z/2)^{\varphi^*}$ is at least half-rank. 

\begin{example}\label{example:need-lagrangian-assumption}
 Let $F=T^2$ be a torus. Let $\varphi \colon F \to F$ be a diffeomorphism whose induced map on $H^1(F;\Z/2)$ is represented by $\left(\begin{smallmatrix}
     1 & 1 \\ 1 & 0 
 \end{smallmatrix} \right)$, and consider the associated $T^2$-bundle $Y$ over $S^1$.  
 The monodromy $\varphi$ is orientation-reversing, and so $Y$ is nonorientable.
As in the proof of \cref{prop:lagrangian-fixed-point-stable-filling}, the orientation character factors through the bundle projection map as
    $w^Y \colon \pi_1(Y) \to \pi_1(S^1) \cong \Z \twoheadrightarrow \Z/2$, so there is a lift to $\Z/4$ and hence $Y$ admits a tangential $\pinplus$ structure by~\cref{lemma:condition-for-pin-plus}. 
 
 One computes that~$\varphi^*$ acts transitively on the nonzero elements of $H^1(F;\Z/2)$, and hence has no nonzero fixed points; in particular the rank of the fixed points is $0 < g/2 = 1/2$. By the Wang sequence~\eqref{eq:wang}, the map $H^1(Y;\Z/2) \to H^1(F;\Z/2)$ is trivial, and hence every $\pinplus$ structure on $Y$ induces the same spin structure on~$F$. We need to determine the Arf invariant of this spin structure to identify its spin bordism class.  
 Since the action of $\varphi^*$ on  $H^1(F;\Z/2) \sm \{0\}$ is transitive, the only spin structure that is invariant under $\varphi^*$ is the non-bounding structure on curves corresponding to all three nontrivial elements $(1,0)$, $(0,1)$, and $(1,1)$ in $(\Z/2)^2 \cong H^1(F;\Z/2)$. To see this, note that all spin structures restrict to a non-bounding one on at least one of these three curves.
This spin structure has Arf invariant nonzero, and hence as a spin surface $F$ is nontrivial in $\Omega_2^{\Spin}$. So $Y$ does not $\pinplus$-bound, by \cref{prop:Kirby-taylor}~\eqref{item:kirby-taylor-i}. Thus the assumption on the fixed points was necessary in \cref{prop:lagrangian-fixed-point-stable-filling}.   We do not know whether this $Y$ admits stably exotic fillings, for some other normal 1-type data. 
\end{example}

Finally we construct a circle bundle over a surface ($\neq \RP^2$) which does not admit a tangential~$\pinplus$ structure, as mentioned in \cref{example-intro-circle-bundle}. 

\begin{proposition}\label{prop:circle-bundle}
    Let $F= \#^3 \RP^2$ be the closed surface of nonorientable genus three and let $Y := S^1 \times F$. The orientation character $w_1^Y\colon \pi_1(Y)\to\Z/2$ does not factor through $\Z/4$, and hence~$Y$ does not admit a tangential $\pinplus$ by \cref{lemma:condition-for-pin-plus}.
\end{proposition}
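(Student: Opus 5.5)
We show that $w_1^Y$ admits no lift to $\Z/4$, using \cref{lemma:w_1-squared-zero-iff-Z4-lift}: it suffices to show $w_1(TY)^2 \neq 0 \in H^2(Y;\Z/2)$. The statement then follows from \cref{lemma:condition-for-pin-plus} (or directly from \cref{lemma-existence-of-pin-plus-structure}).

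First we identify $w_1(TY)$. Since $TY \cong \pr_1^*TS^1 \oplus \pr_2^*TF$ and $S^1$ is parallelisable, we have $w_1(TY) = \pr_2^* w_1(TF)$. Naturality of cup products then gives $w_1(TY)^2 = \pr_2^*\big(w_1(TF)^2\big)$.

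By the Künneth theorem with $\Z/2$ coefficients, $H^*(S^1\times F;\Z/2) \cong H^*(S^1;\Z/2)\otimes H^*(F;\Z/2)$. In particular $\pr_2^*\colon H^2(F;\Z/2)\to H^2(Y;\Z/2)$ is injective, being split by the inclusion of a slice $\{pt\}\times F$. So it remains to show $w_1(TF)^2 \neq 0 \in H^2(F;\Z/2)\cong \Z/2$.

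For a closed surface $F$, the Wu class satisfies $v_1 = w_1(TF)$. Hence
\[
w_1(TF)^2 = \Sq^1(w_1(TF)) = v_1 \smile w_1(TF) = w_1(TF)^2,
\]
and this evaluates on $[F]$ to $\langle w_2(TF),[F]\rangle$, by the Wu formula $w_2 = v_2 + \Sq^1 v_1 = w_1^2$ (note $v_2=0$ in dimension two). By Hopf's theorem $\langle w_2(TF),[F]\rangle = \chi(F) \bmod 2$. For $F=\#^3\RP^2$ we have $\chi(F) = 2-3 = -1$, which is odd, so $w_1(TF)^2\neq 0$.

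Alternatively, one can check this by hand. Let $a_1,a_2,a_3$ be the standard basis of $H^1(F;\Z/2)$, with $a_i^2$ the generator and $a_ia_j = 0$ for $i\neq j$. Then $w_1 = a_1+a_2+a_3$, and so $w_1^2 = 3\cdot(\text{generator}) \neq 0$.

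The only step needing any care is the identification $w_1(TF)^2 = w_2(TF)$ together with its evaluation via the Euler characteristic. It is standard, and the explicit cup-product computation gives an independent check. Combining the steps, $w_1(TY)^2\neq 0$, so $w_1^Y$ does not factor through $\Z/4$, and $Y$ admits no tangential $\pinplus$ structure.
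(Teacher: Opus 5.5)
Your proof is correct, but it takes a different route from the paper's, which argues purely group-theoretically.

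\textbf{The paper's route.} It restricts $w_1^Y$ along $\pi_1(F)\to\pi_1(Y)$ and passes to the abelianisation $H_1(F;\Z)$. There the relation $a^2b^2c^2$ makes $[abc]$ an element of order two with $w_1=1$. But any homomorphism to $\Z/4$ sends an element of order two into $\{0,2\}$, and both map to $0\in\Z/2$.

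\textbf{Your route.} You use \cref{lemma:w_1-squared-zero-iff-Z4-lift} to turn the lifting question into whether $w_1(TY)^2=\Sq^1 w_1(TY)$ vanishes. You then reduce to $F$ via the split injection $\pr_2^*$, which is the cohomological counterpart of the paper's restriction to a slice. Finally you detect $w_1(TF)^2\neq 0$ either through $w_2=w_1^2$ (from $v_2=0$) together with $\langle w_2(TF),[F]\rangle\equiv\chi(F)$, or directly from the diagonal cup product form on $H^1(\#^3\RP^2;\Z/2)$.

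\textbf{Comparison.} The paper's argument is more elementary. It needs only a presentation of $\pi_1(F)$ and the values of $w_1$ on its generators, with no Wu formula or characteristic-class evaluation. Yours identifies the obstruction conceptually as $\chi(F)\bmod 2$. It therefore shows at once that $S^1\times F$ admits a tangential $\pinplus$ structure exactly when $\chi(F)$ is even, for every closed surface $F$.

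\textbf{Minor point.} Your chain $w_1^2=\Sq^1 w_1=v_1\smile w_1=w_1^2$ is tautological and can be dropped; the real content is the identity $w_2=w_1^2$, which you do state.
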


\begin{proof}
    The orientation character of $F$ factors as 
    \[w_1^{F} \colon \pi_1(F) \to \pi_1(F) \times \pi_1(S^1) \xrightarrow{\cong} \pi_1(Y) \xrightarrow{w^Y_1} \Z/2.\] 
    Thus if $w_1^Y$ factors through $\Z/4$ then so does $w_1^{F}$.  
    Write $\pi_1(F) = \pi_1(\#^3 \RP^2) \cong \langle a,b,c \,|\, a^2b^2c^2 \rangle$, where the orientation character sends each of $a,b$, and~$c$ to $1$. The orientation character of $F$ factors further through the Hurewicz map as
    \[
    w_1^{F} \colon \pi_1(F) \to H_1(F;\Z) \xrightarrow{\overline{w}_1^{F}} \Z/2.
    \]
    If $w_1^{F}$ factors through $\Z/4$, so does $\overline{w}_1^{F}$; since $\Z/4$ is abelian, maps to it factor through the abelianisation.  Suppose there is such a factorisation $H_1(F;\Z)\xrightarrow{\varphi} \Z/4\to \Z/2$. The element $[abc] \in H_1(F;\Z)$ has order two, so we must have $\varphi([abc])=2\in \Z/4$, which maps trivially to~$\Z/2$. This contradicts the fact that $\overline{w}_1^{F}([abc])=1\in \Z/2$, which holds since each of $a$, $b$, and $c$ are orientation-reversing. 
    Thus $\overline{w}_1^{F}$ does not factor through $\Z/4$, and hence neither does $w_1^{F}$.  
\end{proof}

\section{No stably exotic fillings in the presence of a two-sided projective plane}\label{section:non-existence}

We prove \cref{thm:some-stably-exotic-fillings-dont-exist}, whose statement we recall. 

\begin{reptheorem}{thm:some-stably-exotic-fillings-dont-exist}
   Let $Y$ be a closed 3-manifold that contains a two-sided $\RP^2$.  Then $Y$ does not admit stably exotic fillings. 
\end{reptheorem}

\begin{proof}
	Let $X$ be a smooth filling of $Y$, which exists since the smooth bordism group of unoriented 3-manifolds is trivial. If the universal cover $\wt{X}$ is non-spin, then by \citelist{\cite{Gompf84}\cite{FNOP}*{Theorem~13.3}}, any 4-manifold stably homeomorphic to $X$ is stably diffeomorphic to $X$, and we are done. So we may assume that  $\wt{X}$ is spin. 
    
    Let $(G,w,v)$ be the normal 1-type data of $X$.
    Then by hypothesis we have maps $\RP^2\xrightarrow{f}Y\xrightarrow{\iota} X$, where $\iota\colon Y \to X$ is the inclusion map. 
 Since $f(\RP^2) \subseteq Y$ and $\iota(Y) \subseteq X$ have trivial normal bundles, $(\iota \circ f)^*w_i(\nu_{X}) = w_i(\nu_{\RP^2})$ for $i=1,2$. 
  Let $w_i^{\RP^2} \in H^i(\Z/2;\Z/2)$ denote classes pulling back to $w_i(\nu_{\RP^2})$ under the classifying map $c \colon \RP^2 \to K(\Z/2,1)$, for $i=1,2$.  Since $w(\nu_{\RP^2})=1+x$, with $x\in H^1(\RP^2;\Z/2)$ the generator,  we see that $w_1^{\RP^2} \neq 0$ and  $w_2^{\RP^2}=0$. 
  Consider the composition 
  \[g\colon \Z/2\xrightarrow[\cong]{c_*^{-1}}\pi_1(\RP^2)\xrightarrow{(\iota \circ f)_*}\pi_1(X)\xrightarrow{\cong}G.\]
Denoting the induced map $K(\Z/2,1) \to K(G,1)$ also by $g$, we have a commutative diagram
\[\begin{tikzcd}
    \RP^2 \ar[r,"\iota \circ f"] \ar[d,"c"] & X \ar[d,"c_X"] \\ K(\Z/2,1) \ar[r,"g"] & K(G,1). 
\end{tikzcd}\]
The induced square in cohomology for $i=1,2$, is: 
\[\begin{tikzcd}[column sep = large]
    H^i(\RP^2;\Z/2) &   H^i(X ;\Z/2) \ar[l,"(\iota \circ f)^*"'] \\ H^i(\Z/2 ;\Z/2) \ar[u,"c^*","\cong"']  & H^i(G ;\Z/2). \ar[l,"g^*"'] \ar[u,"c_X^*"]
\end{tikzcd}\]
 By commutativity of this square, and the fact that the left vertical map is injective, we deduce that  $g^*(w) = w_1^{\RP^2} \in H^1(\Z/2,\Z/2)$ and $g^*(v) = w_2^{\RP^2} \in H^2(\Z/2,\Z/2)$. Hence 
  \[g^*(w^3 - wv) = (w_1^{\RP^2})^3 - w_1^{\RP^2}w_2^{\RP^2} \in H^3(\Z/2;\Z/2).\] 
Since $w_1^{\RP^2} \neq 0$, it follows that $(w_1^{\RP^2})^3 \neq 0$, and we saw above that $w_2^{\RP^2}=0$.  
     It follows that $(w_1^{\RP^2})^3 \neq w_1^{\RP^2}w_2^{\RP^2}$, so $w^3 \neq wv \in H^3(G;\Z/2)$. 
    By \cref{thm:stably-exotic-dim3}, there are no closed stably exotic $4$-manifolds with normal 1-type data $(G,w,v)$. It now follows from \cref{thm:main-characterisation}~\eqref{item-main-characterisation-i} that $Y$ does not admit stably exotic fillings.
\end{proof}

In the notation of the previous proof, since $f^*\big(w_1(\nu_Y)^2 + w_2(\nu_Y)\big) = w_1(\nu_{\RP^2})^2 + w_2(\nu_{\RP^2}) = x^2 \neq 0$, it follows that $Y$ does not admit a normal $\pinminus$ structure, hence $Y$ does not admit a tangential $\pinplus$ structure by \cref{cor:equiv-pinplus-minus}. This aligns with \cref{thm:some-stably-exotic-fillings-exist}. 

\section{The cardinality of stably exotic fillings}\label{sec:number-fillings}

We consider the size of the set of stably exotic fillings of a given 3-manifold. First we show there are at most two fillings up to stable diffeomorphism, within a fixed stable homeomorphism class. Then we show that if $Y$ admits stably exotic fillings, then it admits infinitely many stable homeomorphism classes of such. 

\begin{proposition}\label{prop-at-most-two} 
Let $Y$ be a closed 3-manifold and let $M$ be a filling of $Y$. Let $\mathcal{F}_Y(M)$ denote the set of smooth fillings of $Y$ that are stably homeomorphic to $M$ rel.\ boundary,  modulo stable diffeomorphism rel.\ boundary. Then $|\mathcal{F}_Y(M)| \leq 2$.
\end{proposition}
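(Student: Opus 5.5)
We reduce to the kernel of the forgetful map $\Omega_4(\eta_G)\colon\Omega_4(\xi_G)\to\Omega_4(\xi_G^{\Top})$ and show that the relevant classes take at most two values. By \cref{rem:3manhave1type} we may assume $\wt M$ is spin; otherwise, by \citelist{\cite{Gompf84}\cite{FNOP}*{Theorem~13.3}}, stable homeomorphism already implies stable diffeomorphism and $|\mathcal F_Y(M)|=1$. So let $(G,w,v)$ be the normal 1-type data of $M$; every $M'$ in $\mathcal F_Y(M)$ has the same data. Fix a normal $1$-smoothing $\ol\nu_M$. For each $M'$, \cref{thm:kreckstable1type}~\eqref{item:kreck-thm-ii} gives topological normal 1-smoothings of $M$ and $M'$ agreeing on $Y$ with union null-bordant in $\Omega_4(\xi^{\Top}_G)$. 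As in the proof of \cref{thm:main-characterisation}~\eqref{item-main-characterisation-i}, these lift along $\eta_G$ because the Kirby--Siebenmann obstructions vanish. We thus obtain a class $x(M')=[M\cup_Y M',\ol\nu_M\cup-\ol\nu_{M'}]\in K:=\ker\Omega_4(\eta_G)$.

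The key algebraic input is that $K$ has at most two elements, generated by $[K_3]$. Compare the James spectral sequences for $\xi_G$ and $\xi_G^{\Top}$. The map $\Omega^{\Spin}_q\to\Omega_q^{\TopSpin}$ is an isomorphism for $q\le3$. For $q=4$ it is the injection $16\Z\to 8\Z$, whose cokernel $\Z/2$ is detected by the Kirby--Siebenmann invariant. A comparison of filtrations then shows that the kernel is concentrated on the $E_{0,4}$ line, namely the image of $\ker\big(H_0(G;(\Omega_4^{\Spin})^w)\to H_0(G;(\Omega_4^{\TopSpin})^w)\big)$. For $w\neq0$ this is the kernel of $\Z/2\to\Z/2$, $1\mapsto 2\cdot\tfrac12=0$, which is $\Z/2$ generated by $[K_3]$. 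For $w=0$ it is the kernel of $\Z\xrightarrow{\cdot2}\Z$, which is zero; alternatively, one can simply cite \cite{Gompf84} in that case. This matches the statement from \cite{Kasprowski-Powell} used above, that the kernel is nontrivial iff $[K_3]\neq0$. I expect this filtration comparison to be the main obstacle. In particular one must check that elements of higher filtration in $\Omega_4(\xi_G)$ cannot map to zero, which follows from the isomorphism on $E^2_{p,q}$ for $q\le3$ together with compatibility of differentials. The cleanest route is probably to cite \cite{Kasprowski-Powell}*{Section~4}, where exactly this kernel is computed as $\langle[K_3]\rangle$.

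Given that, the argument finishes as follows. Suppose $M_1,M_2,M_3$ lie in $\mathcal F_Y(M)$. By pigeonhole, two of them, say $M_i$ and $M_j$, have $x(M_i)=x(M_j)$ in $K$, a group of order at most $2$. Then $[M_i\cup_Y M_j,\ol\nu_{M_i}\cup-\ol\nu_{M_j}]=x(M_j)-x(M_i)=0$, since bordism classes of unions glued along $Y$ add: $M_i\cup M_j$ is bordant to $(M_i\cup M)\sqcup(M\cup M_j)$ over $\xi_G$. By \cref{thm:kreckstable1type}~\eqref{item:kreck-thm-i}, $M_i$ and $M_j$ are then stably diffeomorphic rel.\ $Y$.

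One subtlety remains: the smoothing $\ol\nu_{M'}$ chosen for $M'$ must restrict on $Y$ to $\ol\nu_M|_Y$. This needs a fixed boundary smoothing throughout. Any two lifts on $M'$ agreeing topologically differ by the action of $H^0$/$H^1$-type data, and we can absorb this by composing with a fibre self-equivalence of $B_G$. Keeping track of this correctly, so that the difference class is honestly defined with respect to one common boundary structure, is the second delicate point.
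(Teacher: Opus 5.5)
Your overall strategy is the paper's. The key input is that $K=\ker\Omega_4(\eta_G)$ has order at most two, generated by $[K_3]$; the paper simply cites Proposition~4.1 of \cite{Kasprowski-Powell}, as you suggest, and your spectral sequence sketch agrees with this. The second ingredient is additivity of unions along $Y$, obtained by gluing in $M\times I$. Your pigeonhole over three fillings is just a repackaging of the paper's statement that every $N$ is stably diffeomorphic to $M$ or to a fixed exotic $M'$. Your explicit treatment of the non-spin universal cover case is a point the paper leaves implicit.

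The step you leave open in your last paragraph is exactly where the paper does real work, and your proposed mechanism misidentifies the problem. The issue is not that lifts on $M'$ are non-unique. The issue is that, as you invoke \cref{thm:kreckstable1type}~\eqref{item:kreck-thm-ii}, the topological smoothing on $M$, and hence its lift, is produced afresh for each $M'$. So your classes $x(M')$ are not yet all defined relative to one fixed $\ol\nu_M$, and the gluing step needs that.

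The paper closes this gap without any self-equivalences. First, the forward direction of \cref{thm:kreckstable1type}~\eqref{item:kreck-thm-ii} lets you prescribe the smoothing on $M$. Take it to be $\eta_G\circ\ol\nu_M$ and transport it across the stable homeomorphism to obtain $\ol\nu^{\Top}_{M'}$ agreeing with it on $Y$. Next, model $B_G$ and $B^{\Top}_G$ as mapping path spaces and write $\ol\nu^{\Top}_{M'}=(\nu^{\Top}_{M'}\times f)\times\chi$. Then $\ol\nu_{M'}:=(\nu_{M'}\times f)\times\chi$ is a lift along $\eta_G$. It is still a normal $1$-smoothing, because $\eta_G$ is an isomorphism on $\pi_1$ and $\pi_2(B_G)=0$. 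It agrees with $\ol\nu_M$ on $Y$ automatically, because $\nu_{M'}|_Y=\nu_M|_Y$ and $f,\chi$ agree on $Y$ with the corresponding components of $\eta_G\circ\ol\nu_M$.

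If you insist on your route, with a fibre self-equivalence $\Phi$ of $B_G$ carrying one smoothing of $M$ to the other, you need two further arguments. First, $\Phi_*$ preserves $K$, since $\Phi$ commutes with $\eta_G$ up to a topological self-equivalence, and therefore acts trivially on $K$ because $|K|\le 2$. Second, a vertical homotopy-extension argument over $(M',Y)$ is needed to restore exact agreement on $Y$. Neither appears in your proposal.
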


\begin{proof}
After possibly stabilising,
we can suppose that $Y$ and $M$ are such that there exists $M'$ that is homeomorphic to~$M$ rel.\ boundary but not stably diffeomorphic rel.\ boundary; if not then  $|\mathcal{F}_Y(M)| =1$ and we are done.  
Let $\xi_G \colon B_G \to \BO$ be the normal 1-type of $M$, constructed using the normal 1-type data $(G:=\pi_1(M), w_1^M, w_2^M)$, and let $\ol{\nu}_M$ be a normal 1-smoothing. Similarly, let $\xi_G^\top \colon B^\top_G \to B\Top$ be the topological normal 1-type of $M$, constructed in the same way.

By \cref{thm:kreckstable1type}, the forgetful map $\Omega_4(\eta_G)\colon \Omega_4(\xi_G)\to\Omega_4(\xi^{\top}_G)$ is not injective and hence the kernel is isomorphic to $\Z/2$, generated by $K_3$ (with some normal~$1$-smoothing) by \cite{Kasprowski-Powell}*{Proposition~4.1}.

Now let $N$ be a smooth filling of $Y$ that is stably homeomorphic to $M$ rel.\ boundary. We will show that $N$ is stably diffeomorphic to either $M$ or $M'$ rel.\ boundary, which will complete the proof. By \cref{thm:kreckstable1type}\,\eqref{item:kreck-thm-ii}, there exists a topological normal $1$-smoothing $\ol{\nu}_N^{\top}$ of $N$ that is bordant to $\overline{\nu}^{\top}_M:=\eta_G\circ \ol{\nu}_M$ rel.\ boundary.
We claim that $\ol{\nu}_N^{\top}$ can be lifted to a normal $1$-smoothing of~$N$ which agrees with $\ol{\nu}_M$ on $Y$. To see this, recall from~\eqref{eqn:defn-of-B_G} that $B_G$ is the homotopy fibre of the map $p:=P_2(-)+(w_1^M\times w_2^M)$. Similarly, from~\eqref{eqn:defn-of-B_G-top}, we have that $B_G^{\top}$ is the homotopy fibre of the map $p^{\top}:=P_2(-)+(w_1^M\times w_2^M)$. Modelling these homotopy fibres as mapping path spaces, and using superscript $I$ to denote the free path space, we wish to first solve the lifting problem
\[
\begin{tikzcd}
    Y\ar[rrr, "(\overline{\nu}_M)|_Y"]\ar[d, "\mathrm{incl.}"]
    &&&
    (\BO\times K(G,1))\times_p(K(\Z/2,1)\times K(\Z/2,2))^I\ar[d, "\eta_G=(\eta\times\Id)\times \Id"]
    \\
     N\ar[rrr, "\overline{\nu}^{\top}_N"']\ar[urrr, dashed,"\ol{\nu}_N"]
     &&&
    (\BTop\times K(G,1))\times_{p^{\top}}(K(\Z/2,1)\times K(\Z/2,2))^I,
\end{tikzcd}
\]
where $\eta\colon \BO\to\BTop$ denotes the forgetful map.
In this model, $\overline{\nu}^{\top}_N=(\nu^{\top}_N\times f)\times\chi$, for some maps $f\colon N\to K(G,1)$ and $\chi\colon N\to (K(\Z/2,1)\times K(\Z/2,2))^I$ such that $p^{\top}((\nu_N^{\top}\times f)(x))=\chi(x)(1)$ for all $x\in N$. Noting that $(w_1\times w_2)\circ\eta\circ\nu_N=(w_1\times w_2)\circ\nu^{\top}_N$, we see that $\overline{\nu}_N:=(\nu_N\times f)\times\chi$ is a lift of $\overline{\nu}_N^{\top}$. 
To see that $\ol{\nu}_N$ agrees with $\ol{\nu}_M$ on restriction to $Y$, we write 
$\overline{\nu}_M=(\nu_M\times f_M)\times\chi_M$ for some $f_M$ and $\chi_M$, and compare to $(\nu_N\times f)\times\chi$. The maps $\nu_N$ and $\nu_M$ agree on restriction to $Y$ because $Y$ is the common boundary of $N$ and $M$. We also have $f|_Y=(f_M)|_Y$ and $\chi|_Y=(\chi_M)|_Y$, because $\ol{\nu}_M^{\top}=(\nu_M^{\top},f,\chi)$ and  $\ol{\nu}_N^{\top}=(\nu_N^{\top},f_N,\chi_N)$, and these topological normal $1$-smoothings agree on restriction to $Y$, by definition of~$\overline{\nu}^{\top}_N$. Thus $(\ol{\nu}_N)|_Y=(\ol{\nu}_M)|_Y$.
Finally, $\ol{\nu}_N$ is a normal $1$-smoothing because~$\eta_G$ induces an isomorphism on fundamental groups, and $\pi_2(B_G)=0$, so $\ol{\nu}_N$ inherits the required connectivity 
 properties from $\overline{\nu}_N^{\top}$.

By definition of $\ol{\nu}^{\top}_N$, we have that
$[M \cup_Y N, \ol{\nu}_M \cup - \ol{\nu}_N] \in \Omega_4(\xi_G)$ maps trivially to $\Omega_4(\xi^{\top}_G)$.
If $[M \cup_Y N, \ol{\nu}_M \cup - \ol{\nu}_N] = 0 \in \Omega_4(\xi_G)$ then $N$ is stably diffeomorphic to $M$ rel.\ boundary by \cref{thm:kreckstable1type}\,(i).  If not, $[M \cup_Y N, \ol{\nu}_M \cup - \ol{\nu}_N]$ is nonzero in $\ker \Omega_4(\eta_G) \cong \Z/2$. 
Since $M'$ is not stably diffeomorphic to $M$ rel.\ boundary,~\cref{thm:kreckstable1type}\,(i) implies that $[M' \cup_Y M,\ol{\nu}_{M'} \cup - \ol{\nu}_{M}]$ is nontrivial in $\Omega_4(\xi_G)$ for any normal 1-smoothing $\ol{\nu}_{M'}$ that agrees with $\ol{\nu}_M$ on $Y$. 
Since $M$ and~$M'$ are homeomorphic, by the argument of the previous paragraph, with $M'$ in place of $N$, there exists  such a normal 1-smoothing $\ol{\nu}_{M'}$ such that 
$[M' \cup_Y M,\ol{\nu}_{M'} \cup - \ol{\nu}_{M}]\in \ker \Omega_4(\eta_G)$.

Thus we have two representatives of the nontrivial element of $\ker \Omega_4(\eta_G) \cong \Z/2$, and so  the sum
\begin{equation}\label{eqn:element-of-bordism-group}
    [M' \cup_Y M,\ol{\nu}_{M'} \cup - \ol{\nu}_{M}] + 
[M \cup_Y N, \ol{\nu}_M \cup - \ol{\nu}_N] \in \Omega_4(\xi_G)
\end{equation}
is trivial. 
Take the product of $(M' \cup_Y M) \sqcup (M \cup_Y N)$ with $I$, and glue $(M \times I, \ol{\nu}_M \circ \pr_1)$ to \[\big((M' \cup_Y M) \cup (M \cup_Y N)\big) \times \{1\}\] along the two copies of $M$, to obtain a bordism over $\xi_G$, witnessing the fact that \eqref{eqn:element-of-bordism-group} is equal to $[M' \cup_Y N , \ol{\nu}_{M'} \cup - \ol{\nu}_N]$.  As the bordism class in~\eqref{eqn:element-of-bordism-group} is trivial, we have that $N$ is stably diffeomorphic rel.\ $Y$ to $M'$ by \cref{thm:kreckstable1type}. It follows that $\mathcal{F}_Y(M) = \{M,M'\}$, which completes the proof. 
\end{proof}

Now we show, as promised, that whenever we have a pair of stably exotic fillings for a 3-manifold~$Y$, we can find infinitely many stable homeomorphism classes of pairs of stably exotic fillings. 

\begin{proposition}\label{prop:many}
Let $M$ and $M'$ be stably exotic fillings of $Y$. Then for all $k \geq 0$, 
\[M_k := M \# k(S^1 \times S^3)\,\, \text{ and }\,\, M_k' := M' \# k(S^1 \times S^3)\]
are also stably exotic fillings of $Y$. Thus there are infinitely many stable homeomorphism classes of fillings of $Y$, each of which contains a stably exotic pair. 
\end{proposition}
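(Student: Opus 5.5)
Two things need proving: that $M_k$ and $M'_k$ remain stably homeomorphic but not stably diffeomorphic rel.\ boundary, and that the pairs for different $k$ lie in different stable homeomorphism classes.

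\emph{Stable homeomorphism} is immediate. A stable homeomorphism $M\#\ell(S^2\times S^2)\cong M'\#\ell(S^2\times S^2)$ rel.\ $Y$ can be chosen to be the identity on a small ball away from $Y$. Connected sum there with $k(S^1\times S^3)$ then gives a stable homeomorphism $M_k\#\ell(S^2\times S^2)\cong M'_k\#\ell(S^2\times S^2)$ rel.\ $Y$.

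\emph{Distinguishing the classes.} I would use fundamental groups, which are invariant under stabilisation by $S^2\times S^2$. We have $\pi_1(M_k)\cong \pi_1(M)*F_k$, with $F_k$ free of rank $k$. Tensoring the abelianisation with $\mathbb{Q}$ gives rank $\operatorname{rk}H_1(M;\mathbb{Q})+k$, so different $k$ give different stable homeomorphism classes.

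\emph{Not stably diffeomorphic.} This is the main step. I would argue by contradiction with a bordism argument in the style of \cref{thm:main-characterisation}.

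1. The normal $1$-type data of $M_k$ is $(G*F_k,\,w*0,\,v*0)$, where $(G,w,v)$ is the data of $M$. Indeed $S^1\times S^3$ is spin, and $H^i(G*F_k;\Z/2)\cong H^i(G;\Z/2)\oplus H^i(F_k;\Z/2)$ for $i\geq 1$. The retraction $r\colon G*F_k\to G$ killing $F_k$ pulls $(w,v)$ back to $(w*0,v*0)$. So \cref{lemma:induced-map-on-1-types} gives a map $r_B\colon B_{G*F_k}\to B_G$ over $\BO$.

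2. Suppose $M_k$ and $M'_k$ were stably diffeomorphic rel.\ $Y$. By \cref{thm:kreckstable1type} there are normal $1$-smoothings $\ol{\nu}_{M_k}$ and $\ol{\nu}_{M'_k}$ agreeing on $Y$ with $[M_k\cup_Y M'_k,\ol{\nu}_{M_k}\cup-\ol{\nu}_{M'_k}]=0$. Pushing this forward along $r_B$ gives a null-bordant class in $\Omega_4(\xi_G)$.

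3. Undo the $S^1\times S^3$ summands. Surgery on the $k$ circles $S^1\times\{pt\}$ in each copy is compatible with the $\xi_G$-structures, because after composing with $r_B$ these circles map trivially to $\pi_1(B_G)=G$ and each summand carries a spin-induced structure. The trace of this surgery is a $\xi_G$-bordism. Surgering $S^1\times S^3$ along $S^1\times pt$ yields $S^4$, so the result is $M\cup_Y M'$ equipped with some $\xi_G$-structures $\ol{\nu}_M,\ol{\nu}_{M'}$ agreeing on $Y$. Hence $[M\cup_Y M',\ol{\nu}_M\cup-\ol{\nu}_{M'}]=0\in\Omega_4(\xi_G)$.

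4. These structures induce isomorphisms $\pi_1\to G$, since the restriction of $r$ to $\pi_1(M)$ is the identity. As $\pi_2(B_G)=0$, they are normal $1$-smoothings. \cref{thm:kreckstable1type}\eqref{item:kreck-thm-i} then says $M$ and $M'$ are stably diffeomorphic rel.\ $Y$, a contradiction.

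\emph{Main obstacle.} The delicate point is step 3: checking that the framing and $\xi_G$-structure on the surgery trace extend. Each circle bounds a disc in $B_G$ because it is null in $G$, and the stable normal bundle extends over the $2$-handle. The resulting structures on the $S^4$ summands are unique up to homotopy, since $S^4$ is simply connected and $\pi_i$ of the fibre vanishes for $i\geq 2$. As a fallback I would instead directly quote that $S^1\times S^3$ bounds $S^1\times D^4$ over $\xi_G$, as in the proof of \cref{thm:orientable-case}, and form a boundary connected sum.
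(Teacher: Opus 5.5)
\textbf{Comparison with the paper.} Your outline is the paper's proof. The differences are presentational: you argue by contradiction rather than directly, you surger the circles instead of using the null-bordism $S^1\times D^4$, and you spell out the $b_1$ count for the last sentence, which the paper leaves implicit.

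\textbf{The gap in steps 3 and 4.} These steps rest on an unstated assumption, and the paper's proof makes the same one when it says the $S^1\times S^3$ summands map trivially to $K(H,1)$ and that one obtains ``some normal 1-smoothing'' of $M$. The assumption is that the normal 1-smoothings $\ol{\nu}_{M_k},\ol{\nu}_{M'_k}$ supplied by \cref{thm:kreckstable1type} induce the standard identifications $\pi_1(M)*F_k\cong G*F_k\cong\pi_1(M')*F_k$. That theorem only gives you \emph{some} pair agreeing on $Y$. Composing with a self-equivalence of $B_{G*F_k}$ over $\BO$ changes the induced isomorphism by an arbitrary automorphism of $G*F_k$ preserving $(w*0,v*0)$. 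When $\pi_1(Y)\to G$ is trivial, as for the fillings of \cref{thm:orientable-case}, this can be done on the $M'_k$ side alone without destroying agreement on $Y$.

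For instance, take $G=\Z=\langle a\rangle$ with $w(a)\neq 0$, and $k=1$. The automorphism $a\mapsto a^3t$, $t\mapsto a^2t$ of $\langle a,t\rangle$ preserves $w*0$. After twisting $\ol{\nu}_{M'_1}$ by it, the composite $\pi_1(M')\to G*F_1\xrightarrow{r}G$ is multiplication by $3$, and the $S^1\times S^3$ circle of $M'_1$ goes to $a^2\neq 1$. So in step 3 that circle cannot be surgered over $B_G$. In step 4 the resulting structure on $M'$ is not $2$-connected, so the ``if'' direction of \cref{thm:kreckstable1type} does not apply.

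\textbf{How to repair it.} Step 3 is fixed by your own fallback, which works for every structure. The obstructions to extending a $\xi_G$-structure from $S^1\times S^3$ over $S^1\times D^4$ lie in $H^{i+1}(S^1\times D^4,S^1\times S^3;\pi_i(F))$, where $F$ is the fibre of $\xi_G$. These groups vanish, because they are nonzero only for $i=3,4$, where $\pi_i(F)=0$.

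For step 4, replace $r$ by a retraction adapted to the given smoothing, $\rho:=\beta\circ q\circ(\ol{\nu}_{M_k})_*^{-1}\colon G*F_k\to G$. Here $q\colon\pi_1(M_k)\to\pi_1(M)$ kills the $S^1\times S^3$ loops, and $\beta\colon\pi_1(M)\cong G$ comes from a normal 1-smoothing of $M$. This $\rho$ pulls $(w,v)$ back to the normal 1-type data of $M_k$, so \cref{lemma:induced-map-on-1-types} applies and the $M$-side becomes a normal 1-smoothing.

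If $\pi_1(Y)\to\pi_1(M)$ is surjective, then two facts together force $\rho\circ(\ol{\nu}_{M'_k})_*$ to restrict to an isomorphism on $\pi_1(M')$: the structures agree on $Y$, and $\ker(\pi_1(Y)\to\pi_1(M))=\ker(\pi_1(Y)\to\pi_1(M'))$. Your contradiction then goes through, and this covers the fillings built in \cref{thm:main-characterisation}.

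Without that surjectivity you need an additional argument. For the fillings of \cref{thm:orientable-case}, the computation $\Omega_4(\xi_{F_{k+1}})\cong\Z/2$ from that proof does the job, since it detects the union through a simply connected spin summand whatever the structure.
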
 

\begin{proof}
Since $M$ and $M'$ are stably homeomorphic rel.\ $Y$, so are $M_k$ and $M_k'$.  
   
    Let $H:= \pi_1(M)$ and let $\xi_H \colon B_H \to \BO$ be the normal 1-type of $M$ and $M'$, with normal 1-type data $(H,x,y)$. Let $F_k$ denote the free group on $k$ letters and let $G := H *F_k$. 
Note that $H^1(G;\Z/2) \cong H^1(H;\Z/2) \oplus (\Z/2)^k$ and $H^2(G;\Z/2) \cong H^2(H;\Z/2)$. 
Under these identifications let $w:= (x,0) \in H^1(G;\Z/2)$ and let $v := y \in H^2(G;\Z/2)$.  Since $\#^k (S^1 \times S^3)$ is spin, the normal $1$-type data of $M_k$ and $M'_{k}$ is $(G,w,v)$.
Let $\phi \colon G \to H$ be the projection. By \cref{lemma:induced-map-on-1-types}, there exists a map $\phi_B\colon B_G\to B_H$ covering $\Id_{\BO}\times \phi\colon \BO\times K(G,1)\to\BO\times K(H,1)$ up to homotopy. This map induces
\[(\phi_{B})_* \colon \Omega_4(\xi_G) \to \Omega_4(\xi_H).\]
To show $M_k$ and $M_k'$ are not stably diffeomorphic rel.\ $Y$, consider the union 
\[[M_k \cup_Y M_k', \ol{\nu}_{M_k} \cup - \ol{\nu}_{M_k'}] \in \Omega_4(\xi_{G})
\]
for arbitrary normal 1-smoothings $\ol{\nu}_{M_k}$ and $\ol{\nu}_{M_k'}$ that agree on $Y$. By \cref{thm:kreckstable1type}, we need to show that this is nontrivial. 
It suffices to show that its image
\begin{equation}\label{eqn:union-M-kM-k-prime}
[M_k \cup_Y M_k', \phi_B\circ\ol{\nu}_{M_k} \cup - \phi_B\circ\ol{\nu}_{M_k'}] \in \Omega_4(\xi_{H}).
\end{equation}
under $(\phi_{B})_*$ is nontrivial.

We claim that for each $S^1 \times S^3$ summand,  $S^1 \times S^3 = \partial(S^1 \times D^4)$ gives a $\xi_H$ null-bordism of $[S^1\times S^3,\phi_B\circ\ol{\nu}_{M_k}|_{S^1 \times S^3}]$.
To see this, consider the map of fibrations:
\[\begin{tikzcd}
    \BSpin \ar[r] \ar[d] & \BO \ar[r] \ar[d]  & K(\Z/2,1) \times K(\Z/2,2) \ar[d,"="] \\
    B_H\ar[r]& \BO \times K(H,1) \ar[r]  & K(\Z/2,1) \times K(\Z/2,2).
\end{tikzcd}\]
Since the $S^1\times S^3$ summand maps trivially to $K(H,1)$, and is spin, the restriction of $\phi_B\circ\ol{\nu}_{M_k}$ to $S^1 \times S^3$ lifts to $\BSpin$. 
Noting that $S^1 \times D^4$ is a  spin filling, for either choice of spin structure on $S^1 \times S^3$, it follows that $S^1 \times S^3$ is null-bordant over $B_H$ as claimed. 

It follows from the claim that $M_k$ is bordant rel.\ $Y$ to $M$ over $\xi_H$, for some normal 1-smoothing $\ol{\nu}_M \colon M \to B_H$. Similarly, $M'_k$ is bordant rel.\ $Y$ to $M'$ over $\xi_H$, for some normal 1-smoothing $\ol{\nu}_{M'} \colon M' \to B_H$. Moreover, by construction, \eqref{eqn:union-M-kM-k-prime} equals \[[M \cup_Y M',\ol{\nu}_M \cup - \ol{\nu}_{M'}] \in \Omega_4(\xi_H).\]   

Since $M$ and $M'$ are not stably diffeomorphic rel.\ $Y$, \cref{thm:kreckstable1type} implies that the latter union is nontrivial in $\Omega_4(\xi_H)$, as desired. 
\end{proof}

\def\MR#1{}
\bibliography{bib-stable-exotic}

\end{document}